\documentclass[a4paper,leqno, oneside, 11pt]{amsart}
\usepackage[letterpaper,top=2cm,bottom=2cm,left=3cm,right=3cm,marginparwidth=1.75cm]{geometry}
\usepackage[OT1,T2A]{fontenc}

\usepackage{comment}
\usepackage[english]{babel}
\usepackage{amsmath,amssymb,amsthm,amscd}
\usepackage{latexsym}
\usepackage{mathrsfs}
\usepackage{array}
\usepackage[dvipdfmx]{graphicx}
\usepackage{color}
\usepackage{mathtools}
\usepackage{tikz-cd} 
\usepackage{url}
\usepackage{todonotes}
\usepackage{booktabs} 

\numberwithin{equation}{section}

\DeclareMathOperator{\Spec}{Spec}

\DeclareMathOperator{\End}{End}

\DeclareMathOperator{\GL}{GL}
\DeclareMathOperator{\Res}{Res} 
\DeclareMathOperator{\Gr}{Gr} 

\DeclareMathOperator{\Ker}{Ker}
\DeclareMathOperator{\Img}{Im}
\DeclareMathOperator{\Aut}{Aut}
\DeclareMathOperator{\Inn}{Inn}
\DeclareMathOperator{\Out}{Out}
\DeclareMathOperator{\Lie}{Lie}
\DeclareMathOperator{\Der}{Der} 
\DeclareMathOperator{\ODer}{OutDer} 

\DeclareMathOperator{\Gal}{Gal}

\newcommand{\mbC}{\mathbb{C}}
\newcommand{\mbF}{\mathbb{F}}
\newcommand{\mcF}{\mathcal{F}}
\newcommand{\mbG}{\mathbb{G}}
\newcommand{\mcG}{\mathcal{G}}
\newcommand{\mcU}{\mathcal{U}}
\newcommand{\mcO}{\mathcal{O}}
\newcommand{\mbZ}{\mathbb{Z}}
\newcommand{\mcP}{\mathcal{P}}
\newcommand{\mbP}{\mathbb{P}}
\newcommand{\mbQ}{\mathbb{Q}}

\newcommand{\mfg}{\mathfrak{g}}
\newcommand{\mfu}{\mathfrak{u}}
\newcommand{\p}{\mathfrak{p}}
\newcommand{\mfr}{\mathfrak{r}}

\newcommand{\et}{\mathrm{\acute{e}t}}  
\newcommand{\un}{\mathrm{un}} 
\newcommand{\ab}{\mathrm{ab}} 
\newcommand{\cts}{\mathrm{cts}} 

\definecolor{e-mail}{rgb}{0,.40,.80}
\definecolor{reference}{rgb}{.20,.60,.22}
\definecolor{mrnumber}{rgb}{.80,.40,0}
\definecolor{citation}{rgb}{0,.40,.80}
\usepackage[breaklinks=true,colorlinks=true,linkcolor=reference, citecolor=citation, urlcolor=e-mail]{hyperref}

\theoremstyle{plain}

\newtheorem{theorem}{Theorem}[section]
\newtheorem{lemma}[theorem]{Lemma}
\newtheorem{proposition}[theorem]{Proposition}
\newtheorem{corollary}[theorem]{Corollary}
\newtheorem{conjecture}[theorem]{Conjecture}

\theoremstyle{definition} 
\newtheorem{definition}[theorem]{Definition}
\newtheorem{example}[theorem]{Example}
\newtheorem{example-definition}[theorem]{Example--Definition}
\newtheorem{remark}[theorem]{Remark}

\title{On graded Lie algebras associated to once-punctured elliptic curves with complex multiplication}
\author[S.~Ishii]{Shun Ishii}
\address{{\footnotesize Department of Mathematics, Keio University, 3-14-1 Hiyoshi, Kouhoku-ku, Yokohama 223-8522, Japan.}}
\email{ishii.shun@keio.jp}
\date{}

\begin{document}
\begin{abstract}
    We study a graded Lie algebra arising from the Galois action on the pro-$p$ fundamental group of a once-punctured elliptic curve with complex multiplication. Among other things, we provide a minimal generating set of the rationalized Lie algebra under suitable assumptions. The proof is based on a slight variant of the theory of weighted completion of profinite groups developed by Hain and Matsumoto. 
\end{abstract}

\maketitle

\setcounter{tocdepth}{1} 
\tableofcontents

\section{Introduction and main result}\label{sec:1}

The aim of the present paper is to study the structure of a certain  Lie algebra arising from the Galois action on the pro-$p$ fundamental group of a once-punctured elliptic curve with complex multiplication. Among other things, we provide a certain set of generators of such a Lie algebra under suitable assumptions, using (a slight variant of) the theory of weighted completion developed by Hain and Matsumoto \cite{HM03}. 

\subsection{Lie algebra arising from Galois action on fundamental group}\label{sec:1.1}

Let $p>3$ be a prime, $K$ an imaginary quadratic field of class number one, and fix an algebraic closure $\bar{K}$ of $K$. We also assume that $p$ \emph{splits} into two primes in $K$ throughout this section. Let $\p$ denote one of such primes and write $\bar{\p}$ for its conjugate. We write $G_{K}$ for the absolute Galois group $\Gal(\bar{K}/K)$ of $K$. Let $E$ be an elliptic curve over $K$ having complex multiplication by the integer ring $\mcO_{K}$ of $K$. The associated \emph{once-punctured elliptic curve} $X$ is then simply defined to be the complement of the origin of $E$. 

We fix a Weierstrass form of $E$ over $K$. It determines the Weierstrass tangential basepoint at the origin \cite[page 209, Case 2]{Na99}, and write $\pi^{\et}_{1}(X)$ for the \'etale fundamental group of $X$ with respect to this basepoint. We have the \'etale homotopy exact sequence 
\[
1 \to \pi_{1}^{\et}(X_{\bar{K}}) \to \pi_{1}^{\et}(X) \to G_{K} \to 1, 
\]
where $X_{\bar{K}} \coloneqq X \times_{\Spec(K)} \Spec(\bar{K})$. The sequence determines the following continuous homomorphism, which we call the \emph{outer Galois representation} (associated to $X$):
\[
\rho_{X} \colon  G_{K} \to \Out(\pi_{1}^{\et}(X_{\bar{K}})) \coloneqq \Aut(\pi_{1}^{\et}(X_{\bar{K}}))/\Inn(\pi_{1}^{\et}(X_{\bar{K}})).   
\]
We note that, since $\pi_{1}^{\et}(X_{\bar{K}})$ is topologically finitely generated, its outer automorphism group has a structure of a profinite group given by congruence topology. This outer action lifts to an actual action since we take a $K$-rational tangential basepoint. However, it depends on the choice of such a basepoint. On the other hand, the outer Galois representation does not. In the present paper, we consider the outer action on the maximal pro-$p$ quotient $\pi_{1}^{(p)}(X_{\bar{K}})$ of $\pi_{1}^{\et}(X_{\bar{K}})$,  which we call the \emph{pro-$p$ outer Galois representation}: 
\begin{align*}
    \rho_{X,p} \colon  G_{K} \to \Out(\pi_{1}^{(p)}(X_{\bar{K}})).
\end{align*} 
We study the outer pro-$p$ Galois representation $\rho_{X,p}$ by dividing the representation into graded pieces using the lower central series on $\pi_{1}^{(p)}(X_{\bar{K}})$. In the rest of this section, we abbreviate $\pi_{1}^{(p)}(X_{\bar{K}})$ as $\pi_{1}$ for simplicity. We write
\[
L^{1}\pi_{1}=\pi_{1} \supset L^{2}\pi_{1}=[\pi_{1}, \pi_{1}] \supset \cdots 
\supset L^{m+1}\pi_{1} = [\pi_{1}, L^{m}\pi_{1}] \supset \cdots
\] for the lower central series of $\pi_{1}$. This filtration induces that on $G_{K}$ by setting
\begin{align*}
F^{m}G_{K} \coloneqq \Ker(G_{K} \xrightarrow{\rho_{X,p}} \Out(\pi_{1}) \to \Out(\pi_{1}/L^{m+1}\pi_{1})) 
\end{align*} for each $m>0$, where the map $\Out(\pi_{1}) \to \Out(\pi_{1}/L^{m+1}\pi_{1})$ is the natural one (which exists because $L^{m+1}\pi_{1}$ is a closed characteristic subgroup of $\pi_{1}$). This filtration is called the \emph{weight filtration} on $G_{K}$ (associated to $X$ and $p$), because the conjugation action of $G_{K}$ on the $m$-th graded quotient
\[
\mfg_{X,m} \coloneqq \Gr_{F}^{m}G_{K} \coloneqq F^{m}G_{K}/F^{m+1}G_{K}
\] has weight $-m$. Note that, since the weight filtration is descending and central, the sum
\begin{align*}
\mfg_{X} \coloneqq  \bigoplus_{m>0} \mfg_{X, m}
\end{align*} has a Lie bracket induced by commutators, making it a graded Lie algebra over $\mbZ_{p}$. Since the intersection of the filtration $\{ F^{m}G_{K} \}_{m>0}$ coincides with $\Ker(\rho_{X,p})$ by \cite[Theorem 2]{As95}, it is important to study the structure of this Lie algebra to understand the kernel of $\rho_{X,p}$. Here we remark that, although we write $\mfg_{X}$ as it arises from $\rho_{X,p}$, it does not depend on the choice of $X$:

\begin{lemma}\label{lmm:independence}
    Let $E'$ be another elliptic curve over a finite extension of $K$ having complex multiplication by $\mcO_{K}$, and $\mfg_{X'}$ the graded Lie algebra defined using the pro-$p$ outer Galois representation of the once-punctured elliptic curve $X'$ associated with $E'$. Then $\mfg_{X'} \otimes \mbQ_{p}$ is isomorphic to $\mfg_{X} \otimes \mbQ_{p}$.
\end{lemma}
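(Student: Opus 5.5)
Two reductions are needed: (a) $\mfg_{X}\otimes\mbQ_{p}$ does not change (up to isomorphism) when $K$ is replaced by a finite extension $L$, and (b) over a suitable common finite extension $L$, $E$ and $E'$ become isomorphic, or at least their outer Galois representations have the same kernel filtration.

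\textbf{Step (b).} Any two elliptic curves with CM by $\mcO_{K}$ have the same $j$-invariant, because $K$ has class number one. Hence they become isomorphic over a finite extension $L$ of the common field of definition. An isomorphism $E_{L}\cong E'_{L}$ carries the origin to the origin, so it induces $X_{L}\cong X'_{L}$. The outer pro-$p$ representation is independent of the choice of basepoint, so this gives a $G_{L}$-equivariant isomorphism $\pi_{1}^{(p)}(X_{\bar K})\cong\pi_{1}^{(p)}(X'_{\bar K})$ up to inner automorphisms. It preserves the lower central series. Consequently the weight filtrations induced on $G_{L}$ by $X$ and by $X'$ coincide, and the graded Lie algebras built from $\rho_{X,p}|_{G_{L}}$ and $\rho_{X',p}|_{G_{L}}$ are equal.

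\textbf{Step (a).} For each $m$ we have $F^{m}G_{L}=F^{m}G_{K}\cap G_{L}$. Therefore $\Gr^{m}G_{L}\to\Gr^{m}G_{K}$ is injective, and we get an injective graded Lie algebra map $\mfg_{X/L}\to\mfg_{X/K}$ compatible with brackets. The main point is that the cokernel is killed by $[L:K]$, hence vanishes after $\otimes\mbQ_{p}$. Take $\sigma\in F^{m}G_{K}$ and put $N=[G_{K}:G_{L}]$ (passing to the Galois closure if necessary). Some power $\sigma^{k}$ with $k\mid N!$ lies in $G_{L}$, so it lies in $F^{m}G_{L}$. In the abelian group $\Gr^{m}G_{K}$ the class of $\sigma^{k}$ is $k$ times the class of $\sigma$. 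Therefore $k\cdot\Gr^{m}G_{K}$ lies in the image of $\Gr^{m}G_{L}$.

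The $\mbZ_{p}$-structure needs care. Each $\Gr^{m}$ is a finitely generated $\mbZ_{p}$-module, since it embeds into the graded pieces of $\Out$ of the pro-$p$ group. Hence multiplication by the prime-to-$p$ part of $k$ is invertible on it, and multiplication by $p$-powers becomes invertible after tensoring with $\mbQ_{p}$. So $\mfg_{X/L}\otimes\mbQ_{p}\cong\mfg_{X/K}\otimes\mbQ_{p}$.

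\textbf{Conclusion.} Apply (a) twice, once for $X$ over $K\subset L$ and once for $X'$ over its field of definition $\subset L$, and combine with (b).

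\textbf{Main obstacle.} The delicate step is making the grading in (a) work. A priori $\mfg_{m}$ could have torsion, and one must check that $\Gr^{m}$ is indeed a $\mbZ_{p}$-module, so that the index argument kills the cokernel rationally. I would justify this by noting that $F^{1}G_{K}$ maps into the pro-$p$ group $\Ker(\Out(\pi_{1})\to\GL(\pi_{1}^{\ab}))$. Note also that $F^{1}$ itself depends on the field through the image of the CM character. That is harmless here, since only positive degrees are involved and the same index argument applies to them.
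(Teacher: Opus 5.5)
Your proposal is correct and follows the paper's proof: you show that $\mfg_{X}\otimes\mbQ_{p}$ is invariant under finite base extension, and you use that $E$ and $E'$ become isomorphic over a finite extension because they have the same $j$-invariant (the paper cites Silverman for this). You also spell out the injectivity of $\Gr^{m}G_{L}\to\Gr^{m}G_{K}$ and the index argument killing the cokernel, which the paper leaves implicit.
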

\begin{proof}
   The Lie algebra $\mfg_{X} \otimes \mbQ_{p}$ does not change if we replace the base field $K$ with its arbitrary finite extension. The claim follows since $E$ and $E'$ become isomorphic over a finite extension of $K$ by \cite[Theorem 4.3]{Si94}.
\end{proof}

In the present paper, we provide a minimal generating set of $\mfg_{X} \otimes \mbQ_{p}$ under suitable assumptions. We remark that if, moreover, $\mfg_{X} \otimes \mbQ_{p}$ is \emph{freely generated} by such elements and $p$ satisfies certain assumptions, we can determine the kernel of $\rho_{X,p}$ explicitly, cf. Theorem \ref{thm:Ker}.  It provides a typical example illustrating the importance of understanding the structure of this Lie algebra. These results are analogues of the corresponding results for the projective line minus three points, which we recall in \textsection \ref{sec:1.3}. 

\subsection{Structure of Lie algebra and its application}\label{sec:1.2}
 
Define a set of pairs of integers by
\begin{align*}
I_{K} \coloneqq \{ \boldsymbol{m}=(m_{1}, m_{2}) \mid \mbZ_{>0}^{2} \mid m_{1} \equiv m_{2} \bmod w_{K} \} \setminus \{ (1,1) \},
\end{align*} where $w_{K}$ is the number of roots of unity in $K$ (which is either $2$, $4$ or $6$). It is known (see \cite[(4.2) Proposition]{Na95} and \cite[Proposition 2.2]{Is23+}) that $\mfg_{X,m}$ is trivial whenever $m$ is odd, and is a torsion-free $\mbZ_{p}$-module of finite rank on which $G_{K}/F^{1}G_{K}=\Gal(K(E[p^{\infty}])/K)$ acts by conjugation. Moreover, $\mfg_{X,m} \otimes \mbQ_{p}$ decomposes as
\[
\mfg_{X,m} \otimes \mbQ_{p} \cong \bigoplus_{\substack{\boldsymbol{m} \in I_{K}, \\ |\boldsymbol{m}|=m}} \mbQ_{p}(\boldsymbol{m})^{r_{\boldsymbol{m}}} 
\quad \text{for some $r_{\boldsymbol{m}} \in \mbZ_{\geq 0}$}.
\] Here, we write $|(m_{1},m_{2})| \coloneqq m_{1}+m_{2}$ for $(m_{1},m_{2}) \in \mbZ^{2}$. We refer the reader to \textsection \ref{sec:1.5} for the definition of a one-dimensional representation  $\mbQ_{p}(\boldsymbol{m})$, which appears in the $|\boldsymbol{m}|$-th symmetric product of the rational Tate module $V_{p}(E)$. As a consequence, $\mfg_{X} \otimes \mbQ_{p}$ is a bigraded Lie algebra whose bigrading is given by $I_{K}$. The main result of the present paper is stated as follows:

\begin{theorem}\label{thm:main1}
    Suppose the second \'etale cohomology group
    \[
    H^{2}_{\et}(\mcO_{K}[1/p], \mbQ_{p}(\boldsymbol{m}))= \varprojlim_{n} \left( H^{2}_{\et}(\mcO_{K}[1/p], \mbZ/p^{n}\mbZ(\boldsymbol{m})) \right) \otimes \mbQ_{p}
    \] of the spectrum of the $p$-integer ring of $K$ vanishes for every $\boldsymbol{m} \in I_{K}$. Then the bigraded Lie algebra $\mfg_{X} \otimes \mbQ_{p}$ is generated by $\{ \sigma_{\boldsymbol{m}} \}_{\boldsymbol{m} \in I_{K}}$, where $\sigma_{\boldsymbol{m}}$ is a nonzero element of the $\mbQ_{p}(\boldsymbol{m})$-isotypic component of $\mfg_{X,|\boldsymbol{m}|} \otimes \mbQ_{p}$.
\end{theorem}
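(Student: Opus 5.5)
Following the strategy of Hain--Matsumoto for $\mbP^{1}\setminus\{0,1,\infty\}$, we pass to a weighted completion. Let $\Gamma \coloneqq \Gal(K_{S}/K)$ with $S$ the primes above $p$; $\rho_{X,p}$ factors through $\Gamma$ since $X$ has good reduction away from $p$ (we may enlarge $K$ by Lemma \ref{lmm:independence}). The image of $\Gamma$ in $\Gal(K(E[p^{\infty}])/K)$ is open in $\mcO_{K,\p}^{\times}\times\mcO_{K,\bar{\p}}^{\times}$. Its Zariski closure $R$ over $\mbQ_{p}$ is a two-dimensional torus $T=\Res_{K\otimes\mbQ_{p}/\mbQ_{p}}\mbG_{m}$, because $p$ splits. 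The homomorphism $w\colon \mbG_{m}\to T$, $t\mapsto (t^{-1},t^{-1})$, gives the weight grading. The variant of weighted completion we need is for a reductive quotient that is a torus rather than $\GL_{2}$. It yields a proalgebraic extension
\[
1\to \mathcal{U}\to \mcG\to T\to 1
\]
with $\mathcal{U}$ prounipotent, negatively weighted, and of Lie algebra $\mfu$. The standard cohomological characterization then reads as follows. For every irreducible $T$-representation $V$ of negative weight, $H^{1}$ of $\mfu$ satisfies
\[
\mathrm{Hom}_{T}(H_{1}(\mfu),V)\cong H^{1}_{\cts}(\Gamma,V),
\]
and $H^{2}(\mfu)\otimes V \hookrightarrow H^{2}_{\cts}(\Gamma,V)$ on $T$-isotypic parts.

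First, we identify the one-dimensional characters occurring. They are $\mbQ_{p}(\boldsymbol{m})$ with weight $-|\boldsymbol{m}|$. Continuous cohomology of $\Gamma$ agrees with $H^{i}_{\et}(\mcO_{K}[1/p],\mbQ_{p}(\boldsymbol{m}))$. We compute $\dim H^{1}$ from Tate's global Euler characteristic formula: $H^{0}=0$ for $\boldsymbol{m}\neq(0,0)$, and $K$ has one complex place, so $h^{1}-h^{2}=[K:\mbQ]/2\cdot\dim=1$. Under the hypothesis $H^{2}=0$ we get $h^{1}=1$ for every $\boldsymbol{m}$ with $\boldsymbol{m}\neq 0$ in the relevant range. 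The characters actually appearing in $\mfg_{X}\otimes\mbQ_{p}$ are those with $m_{1}\equiv m_{2} \bmod w_{K}$, and the cohomology is nonzero only for characters trivial on $\mu_{K}$. This parity condition is the reason $I_{K}$ appears. Since $H^{2}=0$, $\mfu$ is \emph{free} on $H_{1}(\mfu)=\bigoplus \mbQ_{p}(\boldsymbol{m})^{\vee\vee}$, with one copy for each such $\boldsymbol{m}$ (including possibly $(1,1)$).

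Second, we relate $\mfu$ to $\mfg_{X}\otimes\mbQ_{p}$. The outer action gives a morphism $\mcG\to$ (the Zariski closure in the outer automorphism group of the Malcev completion of $\pi_{1}$). This restricts to a surjection of $\mfu$ onto the Lie algebra $\mfu_{X}$ of the unipotent radical of the image. We then need that $\Gr^{W}_{\bullet}\mfu_{X}\cong\mfg_{X}\otimes\mbQ_{p}$. This follows as in Hain--Matsumoto from exactness of $\Gr^{W}$ and the fact that $F^{m}G_{K}$ is the preimage of $W_{-m}$. Here we use that the lower central series filtration on $\pi_{1}\otimes\mbQ_{p}$ agrees with the weight filtration, which holds because $H_{1}(X_{\bar K})$ is pure of weight $-1$. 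Thus $\mfg_{X}\otimes\mbQ_{p}$ is a quotient of the free Lie algebra $\Gr^{W}\mfu$, generated by the images of one vector per $\boldsymbol{m}$. The image for $(1,1)$ vanishes because $\mfg_{X,2}$ contains no $\mbQ_{p}(1,1)$-component: the $(1,1)$ class is the Kummer-type class of weight $-2$ which acts innerly, equivalently $(1,1)\notin I_{K}$ by the earlier cited decomposition. Hence we may define $\sigma_{\boldsymbol{m}}$ as the image of a generator in bidegree $\boldsymbol{m}$.

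Third, we check nonvanishing, so that each $\sigma_{\boldsymbol{m}}$ is nonzero. It suffices to show that each generator survives modulo brackets, i.e.\ maps nontrivially to $H_{1}$ of $\mfg_{X}\otimes\mbQ_{p}$. We would try to prove this via the elliptic Soulé characters of Nakamura: their values detect the $\boldsymbol{m}$-component of $\mfg_{X}$ on depth-one (the abelianization of $\pi_{1}'=[\pi_1,\pi_1]$ modulo the next step). Their nontriviality follows from nonvanishing of the corresponding elliptic units / Hecke $L$-values in $H^{1}$, which is exactly the generator of $H^{1}(\Gamma,\mbQ_{p}(\boldsymbol{m}))$.

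I expect the main obstacle to be the weighted-completion comparison in the second step, namely the identification $\Gr^{W}\mfu_{X}\cong\mfg_{X}\otimes\mbQ_{p}$ with a torus as reductive quotient. The required properties are Zariski density of $\Gamma$ in $T$ and the negativity of all weights. These need a careful adaptation of Hain--Matsumoto's arguments, including the strictness of the weight filtration for the morphism $\mcG\to\Out$.
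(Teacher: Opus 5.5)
Your overall plan matches the paper's: weighted completion, a surjection of $\Gr^{W}_{\bullet}\mfu$ onto the graded Lie algebra of the unipotent radical of the Zariski closure of the Galois image, identification of the latter with $\mfg_{X}\otimes\mbQ_{p}$, $h^{1}=1$ from Euler--Poincar\'e plus $H^{2}=0$, and $\mfg_{X,2}=0$ to discard $(1,1)$. However, your first step is false, and your fix for it breaks the count. A once-punctured CM curve need not have good reduction away from $p$. $E$ always has bad reduction somewhere, and those primes need not lie above $p$; for example, $y^{2}=x^{3}-x$ over $\mbQ(\sqrt{-1})$ is bad at $(1+\sqrt{-1})$, while $p=5$ splits. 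At such a prime inertia already acts nontrivially on $V_{p}(E)$, so neither $\rho_{X,p}$ nor $\chi_{1},\chi_{2}$ factor through $\Gamma$. Enlarging $K$ to some $L\neq K$ via Lemma~\ref{lmm:independence} does not help. Over $L$ the Euler--Poincar\'e formula gives $h^{1}-h^{2}=[L:\mbQ]/2\geq 2$, and the hypothesis says nothing about $\mcO_{L}[1/p]$. So you would only get at least $[L:\mbQ]/2$ generators in each bidegree, and your computation using the single complex place of $K$ is no longer available.

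The missing idea is how the paper obtains a Galois group unramified outside $p$ \emph{over $K$ itself}. The Galois image centralizes $\mu_{K}=\Aut(X)$. After passing to the centralizer $\Out^{\ast}_{\mu_{K}}(\pi_{1}^{\un}(X_{\bar K}))$ and dividing by $\mu_{K}$, the ramification at the bad primes disappears (\cite[Lemma 2.13]{Is23+}, Proposition~\ref{prp:fundamental2}). The reductive quotient becomes $\mbG_{m}^{2}/\mu_{K}$, whose irreducible representations are exactly the $\mbQ_{p}(\boldsymbol{m})$ with $m_{1}\equiv m_{2}\bmod w_{K}$; this is where the congruence in $I_{K}$ comes from. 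Your stated reason, that cohomology vanishes for characters nontrivial on $\mu_{K}$, is wrong. Whenever such a character is defined on $\Gamma$, the same Euler--Poincar\'e computation gives $h^{1}\geq 1$. Those bidegrees drop out only because $\mfg_{X}\otimes\mbQ_{p}$ has no components there, so the corresponding generators map to $0$. An alternative repair is to keep the bad primes in $\Gamma$ and check, via the localization sequence and weights, that they add nothing to $H^{1}(\,\cdot\,,\mbQ_{p}(\boldsymbol{m}))$ for $\boldsymbol{m}\in I_{K}$.

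Similarly, with your single cocharacter $t\mapsto(t^{-1},t^{-1})$, $H_{1}(\mfu)$ has components in every $\boldsymbol{m}$ with $|\boldsymbol{m}|>0$, including $\boldsymbol{m}\notin\mbZ^{2}_{>0}$. Also $(1,1)$ contributes two copies, not one. So your claimed freeness with one generator per such $\boldsymbol{m}$ is false. This is harmless for generation, since those generators also map to $0$. It is why the paper uses the two cocharacters $w_{\p},w_{\bar{\p}}$ together with the subgroup $\Out^{\ast}$ preserving $\mbQ_{p}z$, which kills the $(m,0)$ and $(0,m)$ components.

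Finally, your third step should simply cite the nontriviality of $\kappa_{\boldsymbol{m}}$ under the same hypothesis, \cite[Theorem 1.5 (1)]{Is25}, as the paper does.
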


Note that a similar assertion holds true for inert primes;  see \textsection \ref{sec:3.4} for its statement (Theorem \ref{thm:main2}). The \'etale cohomology group $H^{i}_{\et}(\mcO_{K}[1/p], \mbQ_{p}(\boldsymbol{m}))$ $(i=1,2)$ here is just the $i$-th cohomology group of the Galois group of the maximal $p$-ramified extension of $K$ by \cite[Chapter I\hspace{-.01em}I, Proposition 2.9]{Mi06}. 

The proof of Theorem \ref{thm:main1}, which will be presented in \textsection \ref{sec:3}, is based on (a slight variant of) Hain--Matsumoto's theory of weighted completion of profinite groups.  Roughly speaking, the weighted completion gives a proalgebraic group that nicely approximates the image of $\rho_{X,p}$. Moreover, the cohomology groups of the prounipotent radical of its Lie algebra can be calculated using \'etale cohomology groups of $\Spec(\mcO_{K}[1/p])$. 

\medskip

We have two remarks concerning the assumption on the vanishing of $H^{2}$.

\begin{remark}\label{rmk:EPchar}
    We have the following equivalence for every $(m_{1},m_{2}) \in I_{K}$:
   \[
    H^{2}_{\et}(\mcO_{K}[1/p], \mbQ_{p}(\boldsymbol{m}))=0
    \quad \Leftrightarrow \quad
    \dim_{\mbQ_{p}}  H^{1}_{\et}(\mcO_{K}[1/p], \mbQ_{p}(\boldsymbol{m}))=1.
   \] To obtain the equivalence, it suffices to observe that the Euler--Poincar\'e characteristic for $\mbQ_{p}(\boldsymbol{m})$ is equal to $-1$. This follows since the Euler--Poincar\'e characteristic is the same as that of $\mbF_{p}(\boldsymbol{m})$  \cite[A.1.8. Lemme]{PR95}, which is equal to $-1$ by Tate's global Euler--Poincar\'e characteristic formula \cite[(8.7,6)]{NSW08}. Moreover, by Kummer theory, we have  
   \[
   \dim_{\mbQ_{p}} H^{1}_{\et}(\mcO_{K}[1/p], \mbQ_{p}(1))=2.
   \] 
\end{remark}

\begin{remark}\label{rmk:Jannsen}
    According to Jannsen \cite[Conjecture 1]{Ja89}, the concerned second cohomology groups are expected to vanish. They are  known to vanish for Tate twists $(m,m) \in I_{K}$ by Soul\'e \cite[page 376]{So81}. They also vanish for every $\boldsymbol{m}=(m_{1},m_{2}) \in I_{K}$ such that both $m_{1}$ and $m_{2}$ are divisible by $p-1$ \cite[Lemma 4.7]{Is25}. We also record a sufficient condition under which the desired vanishing holds in the following Lemma \ref{lmm:H2}: For example, one can check that the condition is satisfied for $(K,p)=(\mbQ(\sqrt{-1}),5), (\mbQ(\sqrt{-3}), 7)$ and $(\mbQ(\sqrt{-19}),7)$.
\end{remark}

\begin{lemma}\label{lmm:H2}
    Suppose the class number of $K(p)$ does not divide $p$ and there are only two primes of $K(p^{2})$ lying above $p$. Then $H^{2}_{\et}(\mcO_{K}[1/p], \mbQ_{p}(\boldsymbol{m}))$ vanishes for every $\boldsymbol{m}=(m_{1}, m_{2}) \in I_{K}$.
\end{lemma}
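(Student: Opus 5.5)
The plan is to pass to the tower $K_{\infty}\coloneqq K(E[p^{\infty}])=\bigcup_{n}K(p^{n})$ and reduce the vanishing to a statement about $H^{2}$ with coefficients $\mbQ_{p}/\mbZ_{p}(1)$. Over $K_{\infty}$ all the characters $\mbQ_{p}(\boldsymbol{m})$ become trivial after twisting, and $H^{2}$ with coefficients $\mbQ_{p}/\mbZ_{p}(1)$ is controlled by class groups and local Brauer groups. Let $S$ be the set of primes above $p$ and $G_{S}(\cdot)$ the Galois group of the maximal $S$-ramified extension. Since $G_{S}(K)$ has $p$-cohomological dimension $2$ and $H^{2}$ is the top degree, I would first use the standard fact that $H^{2}_{\et}(\mcO_{K}[1/p],\mbQ_{p}(\boldsymbol{m}))=0$ if and only if $H^{2}(G_{S}(K),\mbQ_{p}/\mbZ_{p}(\boldsymbol{m}))$ is finite. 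This follows because $H^{2}(\mbZ_{p}(\boldsymbol{m}))\otimes\mbQ_{p/\mbZ_{p}}$ injects into $H^{2}(\mbQ_{p}/\mbZ_{p}(\boldsymbol{m}))$ with cokernel $H^{3}$-torsion, which is $0$. So it suffices to show that the latter group has $\mbZ_{p}$-corank $0$.

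Next I would compute $H^{2}(G_{S}(K_{\infty}),\mbQ_{p}/\mbZ_{p}(1))$. Poitou--Tate, together with $\mu_{p^{\infty}}\subset K_{\infty}$, gives for each finite layer $F=K(p^{n})$ the exact sequence
\[
0\to \mathrm{Cl}_{S}(F)\otimes\mbQ_{p}/\mbZ_{p}\to H^{2}(G_{S}(F),\mbQ_{p}/\mbZ_{p}(1))\to \bigoplus_{v\mid p}\mbQ_{p}/\mbZ_{p}\xrightarrow{\ \Sigma\ }\mbQ_{p}/\mbZ_{p}\to 0.
\]
The hypothesis that only two primes of $K(p^{2})$ lie above $p$ shows that the two primes above $\p$ and $\bar{\p}$ do not split further in $K_{\infty}/K(p)$. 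Here I use that $K_{\infty}/K(p)$ is a $\mbZ_{p}^{2}$-extension, so nonsplitting at the first layer propagates up the tower. This nonsplitting also gives total ramification above $K(p)$.

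Since the class number of $K(p)$ is prime to $p$, Nakayama's lemma (the standard Iwasawa argument for a totally ramified pro-$p$ extension) shows that $\varinjlim_{n}\mathrm{Cl}_{S}(K(p^{n}))[p^{\infty}]=0$. Hence $H^{2}(G_{S}(K_{\infty}),\mbQ_{p}/\mbZ_{p}(1))\cong\mbQ_{p}/\mbZ_{p}$, namely the kernel of $\Sigma$ on two copies. Moreover, $\mathcal{G}\coloneqq\Gal(K_{\infty}/K)$ acts trivially on it, because $\mathcal{G}$ fixes each of the two primes and acts trivially on local invariants.

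For the descent I would use Hochschild--Serre. Since $H^{2}$ is the top degree for $G_{S}$, the corank of $H^{2}(G_{S}(K),\mbQ_{p}/\mbZ_{p}(\boldsymbol{m}))$ is bounded by the corank of
\[
H^{2}(G_{S}(K_{\infty}),\mbQ_{p}/\mbZ_{p}(\boldsymbol{m}))_{\mathcal{G}}\cong\bigl(\mbQ_{p}/\mbZ_{p}(\chi)\bigr)_{\mathcal{G}},\qquad \chi=\chi_{\boldsymbol{m}}\chi_{\mathrm{cyc}}^{-1}.
\]
The right-hand side is isomorphic to the $\mathcal{G}$-coinvariants because over $K_{\infty}$ the twist becomes $\mbQ_{p}/\mbZ_{p}(1)\otimes\chi$. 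It remains to bound the corank of the higher terms of the spectral sequence. These involve $H^{i}(\mathcal{G},H^{j}(G_{S}(K_{\infty}),\cdot))$ with $i+j=2$ and $i\geq 1$, and I would handle them via the Iwasawa-module structure of $H^{1}$, or alternatively by instead using the duality $H^{2}(K,\cdot)\cong$ coinvariants of the top cohomology, which is valid since $\mathrm{cd}$ considerations make $H^{2}$ right exact. This bookkeeping is the step I expect to be the main obstacle.

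Granting the descent, the point is that $\boldsymbol{m}\neq(1,1)$. Thus $\chi_{\boldsymbol{m}}\neq\chi_{\mathrm{cyc}}=\chi_{(1,1)}$ as characters of $\mathcal{G}$: the character $\chi_{\boldsymbol{m}}$ is $\psi_{\p}^{m_{1}}\psi_{\bar{\p}}^{m_{2}}$-like, and these are distinct for distinct $\boldsymbol{m}$. A nontrivial character has finite coinvariants on $\mbQ_{p}/\mbZ_{p}$, since $(\chi(g)-1)$ is nonzero for some $g$. Hence the corank is $0$, and $H^{2}_{\et}(\mcO_{K}[1/p],\mbQ_{p}(\boldsymbol{m}))=0$. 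As a sanity check, for $\boldsymbol{m}\not\equiv(1,1)$ modulo the prime-to-$p$ part one can already argue at level $K(p)$ with $\mbF_{p}$-coefficients. There $H^{2}(G_{S}(K(p)),\mu_{p})\cong\mbZ/p$ with trivial action of $\Delta=\Gal(K(p)/K)$, and taking $\Delta$-invariants of the twist by $\chi$ mod $p$ gives $0$ outright, consistent with Remark \ref{rmk:EPchar}.
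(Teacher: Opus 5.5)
There is a genuine gap. Your main route rests on a wrong computation and on a false descent principle, and once both are corrected the whole problem lies in the term you set aside as bookkeeping.

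\textbf{The computation over $K_{\infty}$.} The group $H^{2}(G_{S}(K_{\infty}),\mbQ_{p}/\mbZ_{p}(1))$ is $0$, not $\mbQ_{p}/\mbZ_{p}$. (Here $K_{\infty}$ must be $K(p^{\infty})$; the field $K(E[p^{\infty}])$ may ramify at bad primes of $E$.) In your Kummer sequence the term $\mathrm{Cl}_{S}(F)\otimes\mbQ_{p}/\mbZ_{p}$ is zero for every number field, since a finite group tensored with a divisible group vanishes. So the class-number hypothesis does nothing there. In the limit over the tower, restriction multiplies local invariants by the local degrees $[F'_{w}:F_{v}]$. These are unbounded powers of $p$, because $\p$ and $\bar{\p}$ are infinitely ramified in $K(p^{\infty})$, so every class dies; this is weak Leopoldt, and it holds unconditionally. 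Incidentally, the primes above $p$ are not totally ramified in $K(p^{\infty})/K(p)$: inertia is one $\mbZ_{p}$-factor and the other factor is residual.

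\textbf{The descent bound.} Bounding the corank over $K$ by that of the $\mathcal{G}$-coinvariants over $K_{\infty}$ is false. The top-degree isomorphism $H^{2}(G,A)\cong H^{2}(U,A)_{G/U}$ comes from corestriction and needs $U$ open. Already for $\boldsymbol{m}=(1,1)$, the group $H^{2}(G_{S}(K),\mbQ_{p}/\mbZ_{p}(1))\cong\mathrm{Br}(\mcO_{K}[1/p])[p^{\infty}]\cong\mbQ_{p}/\mbZ_{p}$ has corank one, while the group over $K_{\infty}$ vanishes; your two errors happen to cancel in this case. Correctly, for $\boldsymbol{m}\in I_{K}$ one has $E_{2}^{0,2}=0$ and $E_{2}^{2,0}=H^{2}(\mathcal{G},\mbQ_{p}/\mbZ_{p}(\boldsymbol{m}))=0$, the latter because $\chi^{\boldsymbol{m}}$ is nontrivial on $\Gal(K(p^{\infty})/K(p))\cong\mbZ_{p}^{2}$. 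Since $\mathrm{cd}_{p}\mathcal{G}=2$, this gives
\[
H^{2}(G_{S}(K),\mbQ_{p}/\mbZ_{p}(\boldsymbol{m}))\cong H^{1}\bigl(\mathcal{G},H^{1}(G_{S}(K_{\infty}),\mbQ_{p}/\mbZ_{p}(\boldsymbol{m}))\bigr).
\]
This is governed by the Iwasawa module $\Gal(M_{\infty}/K_{\infty})$, where $M_{\infty}$ is the maximal abelian pro-$p$ extension of $K_{\infty}$ unramified outside $p$, specialized at $\chi^{\boldsymbol{m}}$. That is the actual content of the statement (an instance of Jannsen's conjecture), and your proposal does not address it.

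\textbf{What survives.} Your final ``sanity check'' is a correct proof for part of the range. With $p\nmid h_{K(p)}$ and exactly one prime of $K(p)$ above each of $\p,\bar{\p}$, Kummer theory gives $H^{2}(G_{S}(K(p)),\mu_{p})\cong\mbZ/p$ with trivial $\Delta$-action. Since $p\nmid|\Delta|$, you get $H^{2}(G_{S}(K),\mbF_{p}(\boldsymbol{m}))=0$ unless $m_{1}\equiv m_{2}\equiv 1\bmod p-1$. Then $H^{2}(\mbZ_{p}(\boldsymbol{m}))/p\hookrightarrow H^{2}(\mbF_{p}(\boldsymbol{m}))$ and Nakayama conclude, exactly as in the last step of the paper's proof. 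The paper runs its mod-$p$ argument only for $m_{1}\not\equiv m_{2}\bmod p-1$, via $\dim H^{1}(\mbF_{p}(\boldsymbol{m}))=1$ and the Euler characteristic; your version covers a bit more.

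\textbf{What is still missing.} In the residual case, for example $\boldsymbol{m}=(p,1)$ or $(p,p)$ (both in $I_{K}$, since $w_{K}\mid p-1$ for split $p$), one has $\mbF_{p}(\boldsymbol{m})\cong\mu_{p}$. Then $H^{2}(G_{S}(K),\mu_{p})\cong\mathrm{Br}(\mcO_{K}[1/p])[p]\cong\mbZ/p\neq 0$, so no mod-$p$ argument can work. You must rule out $H^{2}(\mbZ_{p}(\boldsymbol{m}))\cong\mbZ_{p}$ using information beyond the level $K(p)$. The paper covers all $m_{1}\equiv m_{2}\bmod p-1$ by citing \cite[Corollary A.2]{Is23+}. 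Your proposal has no replacement for this, and the part of the hypothesis concerning $K(p^{2})$ rather than $K(p)$ plays no role in any step that survives.
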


Here, for an ideal $\mathfrak{m}$ of $K$, we write $K(\mathfrak{m})$ for the ray class field of $K$ of conductor $\mathfrak{m}$.

\begin{proof}
    If $m_{1}$ is congruent to $m_{2}$ modulo $p-1$, the assertion follows from \cite[Corollary A.2]{Is23+}. Hence we may assume $m_{1}$ is not. Then the proof of \cite[Lemma 4.3]{Is23+} shows that $H^{1}_{\et}(\mcO_{K}[1/p], \mbF_{p}(\boldsymbol{m}))$ is one-dimensional. Since the Euler--Poincar\'e characteristic of $\mbF_{p}(\boldsymbol{m})$ is equal to $-1$, it follows that $H^{2}_{\et}(\mcO_{K}[1/p], \mbF_{p}(\boldsymbol{m}))$ vanishes. We obtain the desired vanishing for $\mbQ_{p}(\boldsymbol{m})$, since $H^{2}_{\et}(\mcO_{K}[1/p], \mbZ_{p}(\boldsymbol{m})) \otimes \mbF_{p}$ is embedded into $H^{2}_{\et}(\mcO_{K}[1/p], \mbF_{p}(\boldsymbol{m}))$. 
\end{proof}

The statement of Theorem \ref{thm:main1} can be rephrased in a bit different way, as follows: For each $\boldsymbol{m} \in I_{K}$, one can associate a homomorphism 
\begin{align*}
    \kappa_{\boldsymbol{m}} \colon \mfg_{X,|\boldsymbol{m}|} \to \mbZ_{p}(\boldsymbol{m})
\end{align*} from the $G_{K}$-action on the maximal metabelian quotient of $\pi_{1}$. This character was originally introduced by Nakamura \cite{Na95} for general once-punctured elliptic curves over number fields, where such characters were applied to study anabelian geometry of once-punctured elliptic curves.  Later, we studied these characters in the case of once-punctured elliptic curves with complex multiplication \cite{Is25}. Among other things, we have proved that, under the same assumption as that of Theorem \ref{thm:main1}, the character $\kappa_{\boldsymbol{m}}$ is nontrivial for every $\boldsymbol{m} \in I_{K}$ (\cite[Theorem 1.5 (1)]{Is25}, together with \cite[Lemma 2.10]{Is23+}). Since $\kappa_{\boldsymbol{m}}$ vanishes on the commutator Lie ideal, we obtain a homomorphism
\[
(\mfg_{X} \otimes \mbQ_{p})^{\ab} 
\xrightarrow{\oplus \kappa_{\boldsymbol{m}}}
\bigoplus_{\boldsymbol{m} \in I_{K}} \mbQ_{p}(\boldsymbol{m}).
\] Theorem \ref{thm:main1} states that this is an isomorphism, and hence $\{ \sigma_{\boldsymbol{m}} \}_{ \boldsymbol{m} \in I_{K} } $ gives a \emph{minimal} set of generators of $\mfg_{X} \otimes \mbQ_{p}$ (if various second cohomology groups vanish).

\medskip

We make two more remarks concerning the structure of $\mfg_{X} \otimes \mbQ_{p}$.

\begin{remark}\label{rmk:upperbound}
    For simplicity, suppose that $K$ is neither $\mbQ(\mu_{3})$ nor $\mbQ(\mu_{4})$. Theorem \ref{thm:main1} gives a conditional upper-bound of the dimension of $\mfg_{X,m} \otimes \mbQ_{p}$ for every $m$. For example, we have the following table:

    \begin{table}[htbp]
        \begin{tabular}{lcccccc}
            \toprule
            $m$   & 2 & 4 & 6 & 8 & 10 & 12 \\
            \midrule
            $\dim (\mfg_{X,m} \otimes \mbQ_{p})$ & 0 & $\leq 3$ & $\leq 5$ & $\leq 10$ & $\leq 24$ & $\leq 50$ \\
            \bottomrule
        \end{tabular}
    \end{table}

   The sequence $3,5,10,24, 50, \dots$ also appears as that of dimensions of graded components of the \emph{Yang--Mills Lie algebra} $\mathfrak{ym}(3)$ with three generators introduced by Connes and Dubois--Violette \cite{CDV02}, and is  listed in OEIS \cite[A072337]{OEIS}. This is simply because the Lie ideal $\oplus_{m>2} \Gr^{m}\mathfrak{ym}(3)$ is freely generated by $I_{K}$. At the writing of the present paper, however, we do not know whether this curious coincidence has a meaningful interpretation or not.
\end{remark}

\begin{remark}\label{rmk:stable}
    Tsunogai \cite[page 223]{Ts03} has introduced a certain graded Lie algebra over $\mbQ$, called the \emph{stable derivation algebra of genus one}. The graded Lie algebra associated with the pro-$p$ outer Galois representation of an arbitrary once-punctured elliptic curve over a number field is embedded into this algebra (over $\mbQ_{p}$).  The dimensions of its graded components of degree up to $12$ are given as follows \cite[Table 1 of \textsection 3.11 and Proposition 3.15]{Ts03}:
\medskip

\centerline{%
\begin{tabular}{lcccccc}
\toprule
$m$   & 2 & 4 & 6 & 8 & 10 & 12 \\
\midrule
$\dim$ & 0 & 3 & 6 & 10 & 25 & 50 \\
\bottomrule
\end{tabular}}

\medskip
    For example, the dimension at degree $6$ differs by one from the upper-bound given in the previous remark. Roughly speaking, the defect occurs since two independent elements in the dual of the degree $6$ component of the stable derivation algebra become dependent on the dual of $\mfg_{X,6} \otimes \mbQ_{p}$ by an  explicit formula \cite[Theorem 1.7]{IG25} relating the Soul\'e character $\kappa_{m}$ (see \textsection \ref{sec:1.3}) with its elliptic counterpart $\kappa_{(m,m)}$.  On the other hand, for a generic once-punctured elliptic curve over $\mbQ$, Tsunogai has shown that the associated Lie algebra coincides with the stable derivation algebra of genus one, at least of degree up to $12$ \cite[Proposition 3.15]{Ts03}. It would be an interesting question to characterize the image of $\mfg_{X} \otimes \mbQ_{p}$ inside the stable derivation algebra of genus one.
\end{remark}

In our previous work, we have conjectured that $\mfg_{X} \otimes \mbQ_{p}$ is free:

\begin{conjecture}[{\cite[Conjecture 2.3]{Is23+}}]\label{cnj:ellDI}
     The Lie algebra  $\mfg_{X} \otimes \mbQ_{p}$ is a free bigraded Lie algebra with one generator in the $\mbQ_{p}(\boldsymbol{m})$-isotypic component of $\mfg_{X, |\boldsymbol{m}|} \otimes \mbQ_{p}$ for each $\boldsymbol{m} \in I_{K}$.
\end{conjecture}

One motivation to consider Conjecture \ref{cnj:ellDI} is the following theorem, which determines the kernel of the pro-$p$ outer Galois representation associated to $X$ explicitly:

\begin{theorem}[{\cite[Theorem 2.14]{Is23+}}]\label{thm:Ker}
    Suppose that
    \begin{enumerate}
        \item The class number of $K(p)$ does not divide $p$,
        \item The prime $\bar{\p}$ is totally inert in $K(\p^{2})/K$, and
        \item Conjecture \ref{cnj:ellDI} holds.
    \end{enumerate}
    Then, the fixed field $\bar{K}^{\Ker(\rho_{X,p})}$ of $\rho_{X,p}$ coincides with the composite field of $K(E[p])$ and the maximal pro-$p$ extension of $K(p)$ unramified outside $p$.
\end{theorem}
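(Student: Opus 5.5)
The plan is to compare two pro-$p$ groups over $K(E[p])$. The first is $\mathcal{G}_{1}\coloneqq\Gal(\bar{K}^{\Ker(\rho_{X,p})}/K(E[p]))$, the image of $\rho_{X,p}$ restricted to $G_{K(E[p])}$. The second is $\mathcal{G}_{2}\coloneqq\Gal(\Omega/K(E[p]))$, where $\Omega$ is the composite field in the statement. I will show first that $\bar{K}^{\Ker(\rho_{X,p})}\subset\Omega$, and then that the resulting surjection $\mathcal{G}_{2}\twoheadrightarrow\mathcal{G}_{1}$ is injective. The latter will be checked on graded Lie algebras, with Conjecture \ref{cnj:ellDI} supplying the freeness of the target.

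\emph{The inclusion $\bar{K}^{\Ker(\rho_{X,p})}\subset\Omega$.} By CM theory, $K(E[p^{\infty}])=\bar{K}^{F^{1}G_{K}}$ is abelian over $K$. Over $K(E[p])$ it is a pro-$p$ extension, and $K(E[p])\supset K(p)$. Since $\rho_{X,p}(F^{1}G_{K})$ is pro-$p$ (it acts trivially on $\pi_{1}^{\ab}$), the kernel field is a pro-$p$ extension of $K(E[p])$.
\begin{itemize}
\item[(a)] Unramifiedness: I would invoke the standard result (Ihara, Asada, Nakamura) that this extension is unramified outside $p$ and the primes of bad reduction of $E$. Since $E$ has potentially good reduction everywhere and $K(E[p])$ already kills the inertia at those primes, the extension is unramified outside $p$.
\item[(b)] Descent to $K(p)$: Condition (2) and the description of $K(E[p])/K(p)$ via ray class fields show that this pro-$p$ $p$-ramified extension of $K(E[p])$ already comes from the maximal pro-$p$ $p$-ramified extension $M$ of $K(p)$. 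Hence it lies in $\Omega=K(E[p])M$.
\end{itemize}

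\emph{Freeness of the source.} Condition (1) gives vanishing of the $p$-part of the class group of $K(p)$, and condition (2) controls the local contributions at $\bar{\p}$. Together they should show, via the Shafarevich/Koch criterion (vanishing of $H^{2}$ with $\mbF_{p}$-coefficients, i.e.\ of the relevant Sha and local terms), that $\Gal(M/K(p))$ is a free pro-$p$ group. Hence $\mathcal{G}_{2}$ is free pro-$p$. Decomposing $\Gal(M/K(p))^{\ab}\otimes\mbQ_{p}$ under $\Gal(K(E[p^{\infty}])/K)$ (equivalently, computing $H^{1}_{\et}(\mcO_{K}[1/p],\mbQ_{p}(\boldsymbol{m}))$, which is one-dimensional by Remark \ref{rmk:EPchar} and Lemma \ref{lmm:H2}) gives one generator in each isotypic component $\mbQ_{p}(\boldsymbol{m})$ with $\boldsymbol{m}\in I_{K}$, beyond those accounted for by $K(E[p^{\infty}])$ itself.

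\emph{Injectivity.} Pull the weight filtration back to $\mathcal{G}_{2}$ and compare associated graded Lie algebras. On the target side, Conjecture \ref{cnj:ellDI} says $\mfg_{X}\otimes\mbQ_{p}$ is free on generators $\sigma_{\boldsymbol{m}}$. The characters $\kappa_{\boldsymbol{m}}$ are nontrivial, and condition (1) makes them surjective onto $\mbZ_{p}(\boldsymbol{m})$ integrally, so the generators lift to the free generators of $\mathcal{G}_{2}$. A surjection of free pro-$p$ groups that induces an isomorphism on abelianizations, compatibly with the filtrations, is an isomorphism. Therefore $\mathcal{G}_{2}\cong\mathcal{G}_{1}$ and the two fields coincide.

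The step I expect to be hardest is this last integral comparison. The conjecture is only rational, so I must ensure there is no $p$-torsion discrepancy between $\Gr_{F}$ of $\mathcal{G}_{1}$ and the free Lie algebra. I would first try to use conditions (1) and (2) to show that the $\kappa_{\boldsymbol{m}}$ are surjective mod $p$, giving an $\mbF_{p}$-level generator count, and then apply Burnside's basis theorem for pro-$p$ groups.
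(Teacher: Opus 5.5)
Your proposal has a genuine gap in the injectivity half, and the inclusion half rests on the wrong input. The paper itself only quotes this result from \cite[Theorem 2.14]{Is23+}, as the CM analogue of Sharifi's theorem \cite{Sh02}. Sharifi's mechanism is what is missing here.

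\textbf{Injectivity.} Your claim that $\Gal(M/K(p))$, and hence $\mathcal{G}_{2}\cong\Gal(M/K(p))$, is free pro-$p$ is false for split $p$. We have $\mu_{p}\subset K(p)$, and $K(p)$ has at least one prime above $\p$ and one above $\bar{\p}$ (exactly one each under (2)). Each local term $H^{2}(K(p)_{v},\mbF_{p})\cong\mbF_{p}$ with $v\mid p$ therefore contributes, and Poitou--Tate gives $\dim_{\mbF_{p}}H^{2}(\Gal(M/K(p)),\mbF_{p})\geq|S_{p}(K(p))|-1\geq 1$. Under (1) and (2) this dimension is exactly $1$, so $\mathcal{G}_{2}$ is a one-relator group, not a free one. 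Compare the freeness remark in \S\ref{sec:3.4}, which is made for inert $p$, where $K(p)$ has a single prime above $p$. Your Burnside-type conclusion also fails for further reasons:
\begin{enumerate}
\item $\Gal(M/K(p))^{\ab}$ has finite $\mbZ_{p}$-rank, so it cannot carry one generator for each of the infinitely many $\boldsymbol{m}\in I_{K}$. Those indices label twisted coinvariants of the Iwasawa module $\Gal(M/K(p^{\infty}))^{\ab}$ over $\mbZ_{p}[[\Gal(K(p^{\infty})/K(p))]]$.
\item Conjecture \ref{cnj:ellDI} concerns the graded Lie algebra of $\rho_{X,p}(F^{1}G_{K})$. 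It gives no freeness of $\mathcal{G}_{1}$, which your final step would need.
\item Pulling $F^{\bullet}$ back to $\mathcal{G}_{2}$ is circular, because the intersection of the pulled-back filtration is precisely $\Ker(\mathcal{G}_{2}\to\mathcal{G}_{1})$.
\end{enumerate}
The comparison has to take place over $K(E[p^{\infty}])=K(E[p])K(p^{\infty})$. There $\Gal(M/K(p^{\infty}))$ is free pro-$p$, since $K(p^{\infty})\supset\mu_{p^{\infty}}$ and weak Leopoldt holds. On this group you must construct an intrinsic filtration, defined through the $\Gal(K(p^{\infty})/K)$-weights, with trivial intersection and with associated graded Lie algebra free over $\mbZ_{p}$ on generators in the bidegrees of $I_{K}$. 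Doing so requires two inputs: the structure of $\Gal(M/K(p^{\infty}))^{\ab}$ as a two-variable Iwasawa module, and the integral surjectivity of every $\kappa_{\boldsymbol{m}}$, which you assert but do not derive. This is where hypotheses such as (1) and (2) get used. Only then do Conjecture \ref{cnj:ellDI} and the torsion-freeness of the $\mfg_{X,m}$ force injectivity on graded pieces, and hence on groups.

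\textbf{Inclusion.} Your step (a) is fine, but step (b) is not justified by what you cite. Neither (2) nor the ray-class-field description of $K(E[p])/K(p)$ makes a $p$-ramified pro-$p$ extension of $K(E[p])$ descend to $K(p)$. That extension has degree dividing $w_{K}$ and is in general ramified at bad primes of $E$. For general such extensions of $K(E[p])$, even ones Galois over $K$, descent is false: already the $\mbZ_{p}$-ranks differ when $K(E[p])\neq K(p)$. The missing input is that $\rho_{X,p}(G_{K})$ centralizes the image of $\mu_{K}=\Aut(X)$ in $\Out(\pi_{1})$ (\S\ref{sec:3.2}). Write $[\zeta]$ for the outer automorphism induced by $\zeta\in\mu_{K}$, and $\zeta_{\sigma}\in\mu_{K}$ for the scalar by which $\sigma\in G_{K(p)}$ acts on $E[p]$. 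Then $\sigma\mapsto\rho_{X,p}(\sigma)[\zeta_{\sigma}]^{-1}$ is a homomorphism into the pro-$p$ kernel of $\Out(\pi_{1})\to\GL(E[p])$. Its fixed field $L'$ satisfies $\bar{K}^{\Ker(\rho_{X,p})}\subset K(E[p])L'$. By your step (a), inertia at a bad prime has image of order dividing $w_{K}$ in this pro-$p$ group, so $L'/K(p)$ is unramified there. Hence $L'\subset M$. This is the mechanism behind the passage to $\Out^{\ast}_{\mu_{K}}(\pi_{1}^{\un}(X_{\bar{K}}))/\mu_{K}$ in Proposition \ref{prp:fundamental2}(1), and it uses neither (1) nor (2).
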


Theorem \ref{thm:Ker} may be regarded as an analogue of Sharifi's result \cite{Sh02} (Theorem \ref{thm:Sharifi}) in the case of the projective line minus three points. In this case, the fixed field of the kernel coincides with the maximal pro-$p$ extension of $\mbQ(\mu_{p})$ unramified outside $p$ whenever $p>2$ is regular, assuming what he called \emph{Deligne's conjecture} (Conjecture \ref{cnj:DI}).

\subsection{Case of projective line minus three points}\label{sec:1.3}

In this subsection, we briefly review several previous results in the case of the thrice-punctured projective line $T \coloneqq \mathbb{P}^{1}_{\mbQ}-\{ 0,1,\infty \}$. Let 
\[
\rho_{T,p} \colon G_{\mbQ}=\Gal(\bar{\mbQ}/\mbQ)
\to \Out(\pi_{1}(T_{\bar{\mbQ}})^{(p)})
\] be the pro-$p$ outer Galois representation. As was explained in \textsection \ref{sec:1.1}, the lower central series on the pro-$p$ fundamental group induces a descending central filtration on $G_{\mbQ}$. We write $\mathfrak{t}$ for the associated graded Lie algebra
$
    \mathfrak{t} \coloneqq \bigoplus_{m>0} \mathfrak{t}_{m} 
$ over $\mbZ_{p}$. It is known that $\mathfrak{t}_{m}$ is isomorphic to a finite sum of the $m$-th Tate twist $\mbZ_{p}(m)$.  The following conjecture is nowadays often referred to as the \emph{Deligne--Ihara conjecture}:

\begin{conjecture}[proved by Hain--Matsumoto {\cite{HM03}} and Brown {\cite{Br12}}]\label{cnj:DI}
    The graded Lie algebra $\mathfrak{t} \otimes \mbQ_{p}$ is freely generated by one element in each odd degree $>1$.
\end{conjecture}

Hain and Matsumoto \cite[Conjecture 1]{HM03} proved the generation portion of the conjecture using their theory of weighted completion of profinite groups. Later, Brown \cite{Br12} has established a certain proalgebraic analogue of Belyi's theorem, stating that the homomorphism
\[
U_{\mathrm{Betti}} \to \Out(\pi_{1}^{\un}(\mbP^{1}(\mbC)-\{ 0,1,\infty \}, \vec{01}))
\] is a closed immersion, as a corollary of Hoffmann's basis conjecture \cite[Conjecture 2]{Br12}. This result implies the desired freeness. Here, we write $U_{\mathrm{Betti}}$ for the prounipotent radical of the  fundamental group of the tannakian category $\mathrm{MTM}(\mbZ)$ of mixed Tate motives over $\mbZ$ with respect to the Betti realization functor, $\pi_{1}^{\un}(\mbP^{1}(\mbC)-\{ 0,1,\infty \}, \vec{01})$ for the unipotent completion of the topological fundamental group of $\mbP^{1}_{\mbC}-\{ 0,1,\infty \}$, and $\Out$ for the outer automorphism group scheme \cite[Corollary A.9]{HM03}. 

We refer the reader to Deligne--Goncharov \cite{DG05} for details on the category of mixed Tate motives. From this motivic point of view, Hain--Matsumoto's theory of constrained weighted completion \cite[9.4]{HM03} gives the right tannakian fundamental group of $\mathrm{MTM}(\mbZ)$ with respect to the $\ell$-adic realization functor, and they prove the $G_{\mbQ}$-action on (the prounipotent completion of) $\pi_{1}(T_{\bar{\mbQ}})^{(p)}$ factors though the constrained weighted completion \cite[Theorem 9.8]{HM03}.

For each odd integer $m>1$, we a nontrivial character, called the $m$-th \emph{Soul\'e character}
\[
    \kappa_{m} \colon \mathfrak{t}_{m} \to \mbZ_{p}(m)
\] which arises from the Galois action on the maximal metabelian quotient of $\pi_{1}(T_{\bar{\mbQ}})^{(p)}$, cf. Ihara--Keneko--Yukinari \cite{IKY87}. We refer the interested reader to Ihara's beautiful survey \cite{Ih02} for several arithmetic properties of Soul\'e characters.  Hain--Matsumoto's result already implies that 
\[
    (\mathfrak{t} \otimes \mbQ_{p})^{\ab} \xrightarrow{\oplus \kappa_{m}} \bigoplus_{m>1, \mathrm{odd}} \mbQ_{p}(m)
\] is an isomorphism. Finally, we mention one significant application of Conjecture \ref{cnj:DI} by Sharifi, which gives an affirmative answer to a question posed by Anderson and Ihara \cite[page 272 (a)]{AI88} for every odd regular prime:

\begin{theorem}[Sharifi]\label{thm:Sharifi}
    Assume $p>2$ is regular. Then the fixed field of $\Ker(\rho_{T,p})$ coincides with the maximal pro-$p$ extension of $\mbQ(\mu_{p})$ unramified outside $p$.
\end{theorem}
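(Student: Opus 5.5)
This is Sharifi's theorem, cited from \cite{Sh02} rather than proved here, but the argument runs as follows. Write $\Omega$ for the maximal pro-$p$ extension of $\mbQ(\mu_{p})$ unramified outside $p$, and $\Omega_{T}$ for the fixed field of $\Ker(\rho_{T,p})$. The plan is to prove the two inclusions $\Omega_{T}\subseteq\Omega$ and $\Omega\subseteq\Omega_{T}$ separately, and to use Conjecture~\ref{cnj:DI} (freeness, as established by Hain--Matsumoto and Brown) as the main input for the second.

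\emph{First inclusion.} The representation $\rho_{T,p}$ is unramified outside $p$, because $T$ has good reduction away from $p$. Its image is an extension of the image of $G_{\mbQ}/F^{1}G_{\mbQ}$, which cuts out $\mbQ(\mu_{p^{\infty}})$, by the pro-$p$ group $F^{1}$. Hence $\Omega_{T}$ is a pro-$p$ extension of $\mbQ(\mu_{p})$ unramified outside $p$, so $\Omega_{T}\subseteq\Omega$.

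\emph{Second inclusion.} Let $G=\Gal(\Omega/\mbQ(\mu_{p}))$. Since $p$ is regular, $G$ is a free pro-$p$ group of rank $(p+1)/2$. The group $\mathcal{G}=\Gal(\Omega_{T}/\mbQ(\mu_{p}))$ is a quotient of $G$, and the task is to show that the surjection $G\to\mathcal{G}$ is injective. I would compare associated graded objects.
\begin{enumerate}
\item On $G$, put the filtration induced from the weight filtration via $\rho_{T,p}$. Using regularity, compute $G^{\ab}\otimes\mbF_{p}$ as a $\Gal(\mbQ(\mu_{p})/\mbQ)$-module. Its generators are the cyclotomic generator together with one generator in each odd eigenspace $\omega^{m}$. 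These last generators are detected by the Soul\'e characters $\kappa_{m}$, which are surjective onto $\mbZ_{p}(m)$ when $p$ is regular, by Ihara and Soul\'e.
\item Lift these to elements $\sigma_{m}\in F^{m}$ of $\mathcal{G}$ whose images generate $\mathcal{G}$ topologically.
\item Conjecture~\ref{cnj:DI} says the images of the $\sigma_{m}$ in $\mathfrak{t}\otimes\mbQ_{p}$ freely generate it. The key refinement is to make this work integrally. Regularity should give torsion-freeness of $\mathfrak{t}$ and the fact that the $\sigma_{m}$ generate a free $\mbZ_{p}$-Lie algebra.
\item Conclude that the pro-$p$ group $\mathcal{G}_{1}$ generated by these elements has the Hilbert series of a free pro-$p$ group on generators matching those of $\Gal(\Omega/\mbQ(\mu_{p^{\infty}}))$. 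A group-theoretic lemma then applies: a surjection from a free pro-$p$ group that induces an isomorphism on the associated graded Lie algebras of suitable filtrations is an isomorphism.
\end{enumerate}

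The main obstacle is step~3, passing from freeness over $\mbQ_{p}$ to freeness over $\mbZ_{p}$ (and mod $p$). The point to control is that no $p$-torsion relations appear among the brackets of the $\sigma_{m}$. I would first handle this by showing that $\mathfrak{t}$ embeds in a torsion-free derivation algebra, namely derivations of the free Lie algebra on two generators. Then any relation that holds mod $p$ but not integrally would contradict the rank counts coming from $\mbQ_{p}$-freeness combined with the fact that $G$ is free.
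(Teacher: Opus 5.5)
The paper gives no proof of this statement; it only cites \cite{Sh02}. So I assess your sketch on its own terms. The inclusion $\Omega_T\subseteq\Omega$ is fine. The reverse inclusion has a genuine gap, and it is not where you locate it.

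\textbf{The missing step.} The filtration in your step 1 cannot prove injectivity. Pulled back from $\mathcal{G}$, its intersection is $\Gal(\Omega/\Omega_T)$, which is exactly the group you want to show is trivial. You need an intrinsic, separated filtration on a free group that is contained in the weight filtration. Since the weight filtration lives on $F^1$, the relevant group is $\mathcal{G}_\infty=\Gal(\Omega/\mbQ(\mu_{p^\infty}))$, which is free of infinite rank. For $p$ regular, its abelianization $X_\infty$ is free over $\Lambda=\mbZ_p[[\Gal(\mbQ(\mu_{p^\infty})/\mbQ(\mu_p))]]$, with one summand $\Lambda e_i$ in each odd eigenspace $\omega^i$. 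What you must produce is a free basis $\{\sigma_m\}_{m\ge 3 \text{ odd}}$ of $\mathcal{G}_\infty$ with $\sigma_m\in F^m$ and $\kappa_m(\sigma_m)\neq 0$.

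Your steps 1--2 give only $\sigma_3,\dots,\sigma_{p-2},\sigma_p$, together with a cyclotomic lift that is not in $F^1$. (The $\omega$-eigenspace is detected by $\kappa_p$, not by $\kappa_1$, since $\mathfrak{t}_1=0$.) The higher $\sigma_m$ cannot be obtained the same way. On $G_{\mbQ(\mu_{p^\infty})}$ we have $\kappa_m\equiv\kappa_{m-p+1}\bmod p$, and $\kappa_{m-p+1}$ kills $F^{m-p+2}\supseteq F^m$. Hence $\kappa_m(F^m)\subseteq p\mbZ_p$ for every $m\geq p+2$. Instead, the higher $\sigma_m$ must map, up to units, to the twists $\prod_{3\le j<m,\ j\equiv m \bmod p-1}(\gamma-\chi_{\mathrm{cyc}}^{j}(\gamma))\,e_i$, where $\gamma$ is a topological generator of $\Gal(\mbQ(\mu_{p^\infty})/\mbQ(\mu_p))$.

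The real content is that the image of $F^m$ in $\Lambda e_i$ is this whole ideal. The inclusion $\subseteq$ is formal, from $\kappa_j(F^{j+1})=0$. The reverse inclusion requires pushing such twisted elements into $F^m$ by commutator corrections. That is an integral statement about $\mathfrak{t}$ and the commutator subgroup: roughly, that $\Ker(\kappa_k|_{\mathfrak{t}_k})$ comes from commutators. Nothing in your proposal gives this. Your step 4 (``the Hilbert series match'') simply assumes this compatibility between the weight filtration and the $\Lambda$-structure of $X_\infty$.

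\textbf{The step you flagged is formal.} Given such a basis, let $D^\bullet$ be the weighted descending central series of $\mathcal{G}_\infty$ with $\deg\sigma_m=m$. It is separated, and $D^n\subseteq F^n$ because $[F^a,F^b]\subseteq F^{a+b}$. Its graded $\Gr_D$ is the free $\mbZ_p$-Lie algebra on the $\bar\sigma_m$, hence torsion-free. Deligne's conjecture makes $\Gr_D\otimes\mbQ_p\to\mathfrak{t}\otimes\mbQ_p$ an isomorphism; the $\bar\sigma_m$ span $(\mathfrak{t}\otimes\mbQ_p)^{\ab}$ because $\kappa_m$ kills brackets. Therefore $\Gr_D\to\mathfrak{t}$ is injective. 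Any $g\in D^n\setminus D^{n+1}$ then lies outside $F^{n+1}$, so $\mathcal{G}_\infty\to\Gal(\Omega_T/\mbQ(\mu_{p^\infty}))$ is injective.

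This last part needs no mod-$p$ rank count and only injectivity, not an isomorphism, on graded pieces. Torsion-freeness of $\mathfrak{t}$ holds for every $p$, so it is not a consequence of regularity. Regularity enters through the freeness of $\mathcal{G}_\infty$, the $\Lambda$-freeness of $X_\infty$, and the surjectivity of $\kappa_3,\dots,\kappa_p$. All of these feed into the filtration comparison that your proposal is missing.
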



\subsection{Speculation}\label{sec:1.4}

Note that this subsection is speculative in nature. We explain a possible relation between the content of the present paper and a (hypothetical) tannakian category of mixed motives, which we hope to be useful in considering Conjecture \ref{cnj:ellDI}. Ideally, similarly to $\mathrm{MTM}(\mbZ)$, one also has a suitable $\mbQ$-linear tannakian category of mixed motives, such that its tannakian fundamental group $G$ sits in the following exact sequence
\[
1 \to U \to G \to \Res_{K/\mbQ}(\mbG_{m})/\mu_{K} \to 1.
\] 
Moreover, the associated graded Lie algebra of the prounipotent radical $U \times \Spec(K)$ is freely generated by one element in each bidegree $\boldsymbol{m} \in I_{K}$, and the ``motivic fundamental group'' of $X$ with respect to, say, the Weierstrass tangential basepoint, has an outer action of $G$. Note that this outer action does not lift to an actual action  since the Weierstrass tangential basepoint may have bad reduction at a prime dividing the discriminant. A possible motivic version of Conjecture \ref{cnj:ellDI} is that the outer action of $U$ is faithful. 

We do not have such a favorable category at the writing of the present paper since Beilinson--Soul\'e's vanishing conjecture is still unavailable. Therefore, we must work with each realization first. The aim of the present paper is to consider the $\ell$-adic realization and, at least, we can construct a candidate of the tannakian fundamental group with respect to the $\ell$-adic realization functor in Remark \ref{rmk:constrained}. Regarding the Hodge realization, recall that Brown's resolution of Conjecture \ref{cnj:DI} is reduced to showing Hoffmann's basis conjecture on multiple zeta values \cite[Conjecture 2]{Br12}. Periods of once-punctured elliptic curves might be helpful in considering Conjecture \ref{cnj:ellDI}, in view of this strategy. The \emph{elliptic multiple zeta values} are studied by pioneering works of Enriquez \cite{En14,En16} as holomorphic functions on the upper half plane, and we hope to study the elliptic multiple zeta values evaluated at CM points in this context. 


\subsection{Notational conventions}\label{sec:1.5}

In this subsection, $p$ denotes an arbitrary prime. 

\subsubsection*{Profinite Groups}\label{Profinite Groups}

Let $G$ be a profinite group.

\begin{itemize}
    \item We write $G^{\mathrm{ab}}$ for the maximal abelian quotient of $G$.
    \item We write $G^{(p)}$ for the maximal pro-$p$ quotient of $G$.
\end{itemize}

\subsubsection*{Fields}\label{Fields}

We fix an algebraic closure $\bar{\mbQ}$ of $\mbQ$ and every number field is considered to be a subfield of $\bar{\mbQ}$.  Suppose $F$ is a subfield of $\bar{\mbQ}$. 

\begin{itemize}
    \item For a positive integer $n$, we write $\mu_{n}$ for the set of $n$-th roots of unity in $\bar{\mbQ}$.
    \item We write $G_{F}$ for the absolute Galois group $\Gal(\bar{\mbQ}/F)$ of $F$.
    \item We write $S_{p}(F)$ for the set of primes of $F$ lying above $p$. We abbreviate it as $S_{p}$ if $F$ is clear from the context.  For $\lambda \in S_{p}(F)$, we write $F_{\lambda}$ for the $\lambda$-adic completion of $F$. We often fix an element of $S_{p}(\bar{\mbQ})$ above $\lambda$ implicitly, and consider a homomorphism $
    G_{F_{\lambda}} \coloneqq \Gal(\bar{\mbQ}_{p}/F_{\lambda}) \to G_{F}
    $ corresponding to that prime. 
\end{itemize}

\subsubsection*{Elliptic curves with complex multiplication}

Let $K$ be an imaginary quadratic field of class number one and $E$ an elliptic curve over $K$ having complex multiplication by $\mcO_{K}$. 

\begin{itemize}
\item We write $K(\mathfrak{m})$ for the ray class field of $K$ of conductor $\mathfrak{m}$ for an integral ideal $\mathfrak{m}$ of $K$. We also write $K(\mathfrak{m}^{\infty})$ for the union $\cup_{n>0} K(\mathfrak{m}^{n})$. Note the $G_{K}$-action on the group of $\mathfrak{m}$-torsion points of $E$ induces an injective homomorphism
\[ 
\Gal(K(E[\mathfrak{m}])/K) \hookrightarrow \Aut(E[\mathfrak{m}]) \cong (\mcO_{K}/\mathfrak{m})^{\times}
\] and an isomorphism
\[
    \Gal(K(\mathfrak{m})/K) \xrightarrow{\sim} (\mcO_{K}/\mathfrak{m})^{\times}/\Img(\mcO_{K}^{\times} \to  (\mcO_{K}/\mathfrak{m})^{\times}).
\] The latter isomorphism does not depend on the choice of $E$.
\item Assume $p$ splits in $K$ as $(p)=\p \bar{\p}$. The $p$-adic Tate module $T_{p}(E)$ splits as
\[
T_{p}(E) \cong
\mcO_{K} \otimes \mathbb{Z}_{p} \xrightarrow{\sim} \mcO_{K_{\p}} \times \mcO_{K_{\bar{\p}}}=\mbZ_{p} \times \mbZ_{p}.
\] The $\p$-adic (resp. $\bar{\p}$-adic) Tate module $T_{\p}(E)$ (resp. $T_{\bar{\p}}(E)$) is identified with the first (resp. second) component. We have two characters
\[
\chi_{1} \colon G_{K} \to \Aut(T_{\p}(E))=\mbZ_{p}^{\times}
\quad \text{and} \quad
\chi_{2} \colon  G_{K} \to \Aut(T_{\bar{\p}}(E))=\mbZ_{p}^{\times}.
\] 
We write 
$V_{p}(E) \coloneqq T_{p}(E) \otimes \mbQ_{p}$, 
$V_{\p}(E) \coloneqq T_{\p}(E) \otimes \mbQ_{p}$ and
$V_{\p}(E) \coloneqq T_{\p}(E) \otimes \mbQ_{p}$.
\item Suppose $\boldsymbol{m}=(m_{1},m_{2}) \in \mbZ^{2}$. We write 
\[
\chi^{\boldsymbol{m}} \coloneqq \chi_{1}^{m_{1}}\chi_{2}^{m_{2}} \colon G_{K} \to \mbZ_{p}^{\times},
\] which factors through $\Gal(K(p^{\infty})/K)$ whenever $m_{1}$ is congruent $m_{2}$ modulo $w_{K}$. 

\item Let $M$ be a finitely generated $\mbZ_{p}$-module or a finite-dimensional $\mbQ_{p}$-vector space on which $G_{K}$ acts continuously. We write $M(\boldsymbol{m})$ for its $\boldsymbol{m}$-th twist. 
\end{itemize}

\subsubsection*{Indexes}\label{Indexes}

Let $S$ be a nonempty finite set and $\boldsymbol{m}=(m_{w})_{w \in S}  \in \mbZ^{S}$. We write
\begin{itemize}
    \item $\boldsymbol{0} \coloneqq (0, \dots, 0, \dots, 0) \in \mbZ^{S}$ for the zero index,
    \item $|\boldsymbol{m}| \coloneqq \sum_{w \in S}m_{w}$ for the sum of all components.
\end{itemize}

Let $V$ be a finite-dimensional $\mbQ_{p}$-vector space on which $\mbG_{m}^{S}$ acts. We write $V_{\boldsymbol{m}}$ for its subspace on which the $w$-component of $\mbG_{m}^{S}$ acts by the $m_{w}$-th power of the standard character for every $w \in S$.

\section{Weighted completion and structure of its Lie algebra}\label{sec:2}

The aim of this section is to review the theory of weighted completion of profinite groups developed by Hain and Matsumoto \cite{HM03} in a slightly broader setting than the original one. More precisely, in the following subsections, we consider an arbitrary nonempty finite set of central cocharacters, whereas they consider one cocharacter in \cite[3]{HM03} when defining the notion of negatively weighted extension (see Definition \ref{dfn:wtdext} below). We omit proofs for several assertions when this difference does not cause any  problems.

\subsection{Negatively weighted extension}\label{sec:2.1}

Let $R$ be a reductive group over $\mbQ_{p}$ and $S=\{ w \colon \mathbb{G}_{m} \to R \}$ a nonempty finite set of central cocharacters. Here, a homomorphism $\mbG_{m} \to R$ is called \emph{central cocharacter} if its image is contained in the center of $R$. We write $w_{S}$ for the total cocharacter defined by
\begin{align*}
    w_{S} \colon \mbG_{m} \xrightarrow{\mathrm{diag}} \mbG_{m}^{S} \xrightarrow{\prod w} R
\end{align*}
where the first arrow denotes the diagonal embedding.

\begin{definition}\label{dfn:wtdext}
Suppose an extension 
\[
1 \to U \to G \to R \to 1
\] of $R$ by a unipotent group $U$ is given. The first homology of $U$ (i.e. the abelianization of its Lie algebra) is an $R$-module, and hence is a $\mbG_{m}^{S}$-module via $\prod_{w \in S} w \colon \mbG_{m}^{S} \to R$. Therefore, it can be written as 
\[
H_{1}(U)=\bigoplus_{\boldsymbol{m} \in \mbZ^{S}} H_{1}(U)_{\boldsymbol{m}}.
\] If $H_{1}(U)_{\boldsymbol{m}}$ is trivial for every $\boldsymbol{m}  \in \mbZ^{S} \setminus \mbZ_{<0}^{S}$, we say the extension is \emph{negatively weighted with respect to $S$}.
\end{definition}

Let $G$ be a negatively weighted extension of $R$ by a unipotent group $U$ with respect to $S$, and $V$ a finite-dimensional representation of $G$. We simply refer to a finite-dimensional representation of $G$ as a \emph{$G$-module} in the following. Choose a section $s \colon R \to G$, which exists and is unique up to conjugation by an element of $U(\mbQ_{p})$ by \cite[V\hspace{-.01em}I\hspace{-.15em}I\hspace{-.15em}I, Theorem 4.3]{Ho81}, for example. For each $w \in S$, we write $\tilde{w} \colon \mbG_{m}\to G$ for a lift of $w$ induced by $s$. Then $V$ can be regarded as a $\mbG_{m}^{S}$-module via $\prod_{w \in S} \tilde{w} \colon \mbG_{m}^{S} \to G$, and we obtain a decomposition
\[
V=\bigoplus_{\boldsymbol{m} \in \mbZ^{S}} V_{\boldsymbol{m}}.
\] We define the \emph{weight filtration} on $V$ by 
\[
W_{n}V \coloneqq \bigoplus_{|\boldsymbol{m}| \leq n} V_{\boldsymbol{m}}
\] for each $n \in \mbZ$, and its $n$-th graded quotient by
\[
\Gr_{n}^{W}V \coloneqq W_{n}V/W_{n-1}V \cong \bigoplus_{|\boldsymbol{m}|=n}  V_{\boldsymbol{m}}
\] where the second isomorphism is induced by the inclusion 
$
\bigoplus_{|\boldsymbol{m}|=n}  V_{\boldsymbol{m}} \hookrightarrow W_{n}V.
$ 
For example, we can consider the adjoint action of $G$ on Lie algebras $\mfg \coloneqq \Lie(G)$, $\mfu \coloneqq \Lie(U)$ and $\mfr \coloneqq \Lie(R)$. Then we have the following

\begin{proposition}[cf. {\cite[Proposition 3.4 and Corollary 3.5]{HM03}}]\label{prp:wtdlie} Let $V$ be a $G$-module.
    \begin{enumerate}
    \item If $V$ is a $G$-module, $x \in \mfg_{\boldsymbol{n}}$ and $v \in V_{\boldsymbol{m}}$, then $[x,v] \in V_{\boldsymbol{n}+\boldsymbol{m}}$.
    \item $\mfg$ is a $\mbZ^{S}$-graded Lie algebra, and $\mfu$ is its $\mbZ^{S}$-graded Lie subalgebra.
    \item $\mfu_{\boldsymbol{n}}=0$ unless $\boldsymbol{n} \in \mbZ_{<0}^{S}$.
    \item The decomposition  $\mfg=\bigoplus_{\boldsymbol{n} \in \mbZ^{S}} \mfg_{\boldsymbol{n}}$ is given by
    \[
    \mfg_{\boldsymbol{n}}=
    \begin{cases}
        \mathfrak{r} & (\boldsymbol{n}=\boldsymbol{0}) \\
        \mfu_{\boldsymbol{n}} & (\boldsymbol{n} \in \mbZ^{S}_{<0}) \\
        0 & (\mathrm{otherwise}).
    \end{cases}
    \]
    \item The weight filtration on $\mfg$ is given by Lie ideals, and we have 
    \[
    \mfg=W_{0} \mfg, \quad
    \mfu=W_{-1} \mfg \quad \text{and} \quad
    \mfr \cong \Gr_{0}^{W} \mfg. 
    \]
    \end{enumerate}
\end{proposition}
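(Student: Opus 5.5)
The plan is to work entirely with the $\mbG_{m}^{S}$-action on $V$ and on $\mfg$ obtained from the chosen section $s$ and the lifts $\tilde{w}$, and to reduce every assertion to the weight decomposition of $H_{1}(U)$ together with the centrality of the cocharacters in $R$.

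\textbf{Item (1).} Write $T \coloneqq \mbG_{m}^{S}$, which acts on $G$ by conjugation through $\prod_{w} \tilde{w}$ (the images of the $\tilde w$ commute since they lie in the torus $s(\prod w(\mbG_m^S))$). The action map $\mfg \otimes V \to V$, $x\otimes v \mapsto [x,v]$, is the derivative of $G \times V \to V$ and is therefore $T$-equivariant when $T$ acts on $\mfg$ by the adjoint action. Hence a weight vector of weight $\boldsymbol{n}$ acting on one of weight $\boldsymbol{m}$ lands in weight $\boldsymbol{n}+\boldsymbol{m}$. Applying this to $V=\mfg$ with the adjoint action shows that the bracket respects the $\mbZ^{S}$-grading. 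Since $U$ is normal and $T$-stable, $\mfu$ is a graded subalgebra, which gives item (2).

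\textbf{Item (4), part concerning $\mfr$.} Identify $\mfg = s_{*}\mfr \oplus \mfu$ as $T$-modules. Because each $w$ is central in $R$, the group $T$ acts trivially on $s_{*}\mfr$, so $s_{*}\mfr \subset \mfg_{\boldsymbol{0}}$.

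\textbf{Item (3).} This is the main point. The lower central series $L^{k}\mfu$ consists of $T$-stable ideals, and $L^{1}\mfu/L^{2}\mfu = H_{1}(U)$. The $T$-action on $H_{1}(U)$ induced via $s$ agrees with the one in Definition~\ref{dfn:wtdext}: conjugation by $U$ acts trivially on $H_{1}(U)$, so the action through $\tilde w$ factors through $R$. By the negativity hypothesis, $H_{1}(U)$ has weights only in $\mbZ_{<0}^{S}$. The bracket map $H_{1}(U)^{\otimes k} \to L^{k}\mfu/L^{k+1}\mfu$ is surjective and $T$-equivariant, so every weight occurring in $L^{k}\mfu/L^{k+1}\mfu$ is a sum of $k$ vectors in $\mbZ_{<0}^{S}$, hence lies in $\mbZ_{<0}^{S}$. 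Since $\mfu$ is nilpotent, the filtration is finite. Because $T$-modules are semisimple, the weights of $\mfu$ are the union of the weights of the graded pieces, which gives (3).

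\textbf{Item (4), remaining parts, and item (5).} From $s_{*}\mfr \subset \mfg_{\boldsymbol 0}$ and (3) we get exactly the claimed decomposition. For (5), $W_{n}\mfg$ is a Lie ideal: for $x \in \mfg_{\boldsymbol n}$ with $|\boldsymbol n|\le 0$ and $y \in W_{n}\mfg$, item (1) shows $[x,y] \in W_{n}\mfg$. The identities $\mfg = W_{0}\mfg$ and $\mfu = W_{-1}\mfg$ follow from (4), since $|\boldsymbol n| \le -|S| \le -1$ on $\mbZ_{<0}^{S}$. The isomorphism $\mfr \cong \Gr^{W}_{0}\mfg$ is then the quotient $\mfg/\mfu$.

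The only subtle step is checking that the $T$-action on $H_{1}(U)$ via the lift coincides with the one via $R$, and that working with a finite set $S$ rather than a single cocharacter changes nothing. Both are handled by the centrality of the cocharacters and the triviality of inner $U$-action on $H_{1}(U)$.
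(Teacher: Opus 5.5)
Your proof is correct, but it is organized differently from the paper's. The paper gives no direct argument. It observes that, because the cocharacters in $S$ are central (so the lifts $\tilde{w}$ commute), it suffices to treat one cocharacter at a time, and then quotes \cite[Proposition 3.4]{HM03} for (1)--(4) and \cite[Corollary 3.5]{HM03} for (5). Implicitly, several points are left to the reader: the $\mbZ^{S}$-grading is the joint refinement of the $|S|$ commuting single gradings; negativity of $\mfu$ for each $w$ separately gives exactly the support $\mbZ^{S}_{<0}$; $\mfg_{\boldsymbol{0}}=\mfr$ follows by intersecting the weight-zero parts; and (5) is the single-cocharacter statement for the total cocharacter $w_{S}$, with respect to which the extension is also negatively weighted. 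You instead run the Hain--Matsumoto argument once for the whole torus $\mbG_{m}^{S}$. Your ingredients are: equivariance of the action map $\mfg \otimes V \to V$; the splitting $\mfg = s_{*}\mfr \oplus \mfu$ with $s_{*}\mfr$ of weight $\boldsymbol{0}$ by centrality; and the lower central series of $\mfu$. Its graded pieces are quotients of $H_{1}(U)^{\otimes k}$, hence supported in $\mbZ^{S}_{<0}$ because that set is closed under addition. The paper's route is shorter but leaves the assembly of the per-cocharacter results to the reader. Yours is self-contained and proves the multigraded statements directly. It also makes explicit the check that the $\mbG_{m}^{S}$-action on $H_{1}(U)$ through the lifts $\tilde{w}$ agrees with the action through $R$ used in the definition of a negatively weighted extension.
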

\begin{proof}
    To prove the assertions (1)--(4), we may assume that $S$ consists of one cocharacter, noting that $S$ consists of central cocharacters. Then the corresponding assertions are proved in \cite[Proposition 3.4]{HM03}. The last assertion (5) is the same as \cite[Corollary 3.5]{HM03}.
\end{proof}

The weight filtration on a $G$-module $V$ does not depend on the choice of the section:

\begin{proposition}[{\cite[Proposition 3.8]{HM03}}]\label{prp:wtdfilt}
    Let $V$ be a $G$-module. 
\begin{enumerate}
    \item The weight filtration on a $G$-module $V$ does not depend on the choice of the section $s \colon R \to G$, and each $W_{m}V$ is a $G$-module. 
    \item For each $m \in \mbZ$, the $G$-action on $\Gr_{m}^{W}V$ factors through $R$, and $\mbG_{m}$ acts by the $m$-th power of the standard character via $w_{S}$.
\end{enumerate}
\end{proposition}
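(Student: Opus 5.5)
We prove the two assertions for the total cocharacter $w_{S}$, which is itself a central cocharacter of $R$. The point is that the weight filtration of Definition/Proposition \ref{prp:wtdlie} depends only on $|\boldsymbol{m}|$. So $W_{n}V$ is exactly the sum of the eigenspaces of $\tilde{w}_{S} \coloneqq \prod_{w\in S}\tilde{w}\circ \mathrm{diag}$ with weight $\leq n$. Indeed, $\tilde{w}_{S}(t)$ acts on $V_{\boldsymbol{m}}$ by $t^{|\boldsymbol{m}|}$, and $\tilde{w}_{S}$ is the lift of $w_{S}$ induced by the section $s$.

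For (1), we first show that each $W_{n}V$ is $G$-stable. Since $G = U \rtimes s(R)$ and $G$ is connected up to the reductive part, it suffices to check stability under $s(R)$ and under $\mfu$.

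\begin{itemize}
\item[(a)] \emph{Stability under $s(R)$.} Each $w\in S$ is central in $R$, so $\tilde{w}$ is central in $s(R)$. Hence $s(R)$ commutes with $\prod_{w}\tilde{w}(\mbG_{m}^{S})$ and preserves every $V_{\boldsymbol{m}}$.
\item[(b)] \emph{Stability under $\mfu$.} By Proposition \ref{prp:wtdlie} (1) and (3), an element $x\in\mfu_{\boldsymbol{n}}$ with $\boldsymbol{n}\in\mbZ^{S}_{<0}$ maps $V_{\boldsymbol{m}}$ into $V_{\boldsymbol{m}+\boldsymbol{n}}$, and $|\boldsymbol{m}+\boldsymbol{n}|<|\boldsymbol{m}|$. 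So $\mfu$ lowers weight, and since $U$ is unipotent, $U=\exp(\mfu)$ preserves $W_{n}V$.
\end{itemize}

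Independence of the section follows next. Any other section is $s'=u s u^{-1}$ with $u\in U(\mbQ_{p})$, so the new eigenspaces are $V'_{\boldsymbol{m}}=uV_{\boldsymbol{m}}$ and $W'_{n}V=u\,W_{n}V=W_{n}V$ by the stability just shown.

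For (2), $U$ acts trivially on $\Gr^{W}_{n}V$: by the computation above, $\mfu$ maps $W_{n}V$ into $W_{n-1}V$, so $\exp(\mfu)$ acts as the identity on the graded piece. Hence the $G$-action on $\Gr^{W}_{n}V$ factors through $G/U=R$. Via the identification $\Gr^{W}_{n}V\cong\bigoplus_{|\boldsymbol{m}|=n}V_{\boldsymbol{m}}$, the element $\tilde{w}_{S}(t)$ acts by $t^{|\boldsymbol{m}|}=t^{n}$. Since $\tilde{w}_{S}$ lifts $w_{S}$ and the action factors through $R$, $\mbG_{m}$ acts via $w_{S}$ by the $n$-th power of the standard character.

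The main obstacle is the claim in (b) that $\mfu$ strictly lowers the total weight. This rests on the multi-index negativity of Proposition \ref{prp:wtdlie}, and it is the only place where the hypothesis that the extension is negatively weighted enters. Alternatively, one could reduce the whole statement to the single-cocharacter case \cite[Proposition 3.8]{HM03} applied to $w_{S}$. For that one checks that $G$ is negatively weighted with respect to $w_{S}$ alone, which holds because every $\boldsymbol{m}\in\mbZ^{S}_{<0}$ has $|\boldsymbol{m}|<0$, and that both constructions yield the same filtration, which is the observation made at the start.
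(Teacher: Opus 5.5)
Your proof is correct. The paper gives no argument of its own here, only the citation of \cite[Proposition 3.8]{HM03}, and that citation amounts to your closing alternative: pass to the single total cocharacter $w_{S}$, just as the paper does explicitly in the proof of Proposition \ref{prp:wtdlie}. This works because $G$ is negatively weighted with respect to $w_{S}$ (since $|\boldsymbol{m}|<0$ for $\boldsymbol{m}\in\mbZ^{S}_{<0}$), and because the weight filtration for $w_{S}$ is exactly the one defined here via $|\boldsymbol{m}|$.

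Your main argument is the Hain--Matsumoto proof written out directly for several cocharacters. Its three steps are that $s(R)$ preserves each $V_{\boldsymbol{m}}$ by centrality, that $\mfu$ strictly lowers $|\boldsymbol{m}|$ by Proposition \ref{prp:wtdlie}(1),(3), and that any two sections are $U(\mbQ_{p})$-conjugate. So the two routes agree in substance.
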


As a corollary, $\Gr^{W}_{\bullet} \mfu =\bigoplus_{m \in \mbZ} \Gr^{W}_{m} \mfu$ is a $\mbZ^{S}$-graded Lie algebra, which does not depend on the choice of the section. 

\begin{corollary}
The Lie algebra $\Gr^{W}_{\bullet} \mfu$ is a $\mbZ^{S}$-graded Lie algebra and is independent of the choice of the section. Moreover, its  $\boldsymbol{m}$-th component is given by $(\Gr^{W}_{|\boldsymbol{m}|} \mfu)_{\boldsymbol{m}}$ for each $\boldsymbol{m} \in \mbZ^{S}$, which is trivial unless $\boldsymbol{m} \in \mbZ^{S}_{<0}$.
\end{corollary}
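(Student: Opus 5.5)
The plan is to derive everything from Propositions \ref{prp:wtdlie} and \ref{prp:wtdfilt}, applied to the adjoint representation of $G$ on $\mfg$ and its $G$-stable subspace $\mfu$. Fix a section $s \colon R \to G$ and the induced lifts $\tilde{w}$.

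First, the Lie algebra structure. By Proposition \ref{prp:wtdlie} (5), $W_{\bullet}\mfu$ is a filtration by Lie ideals. By Proposition \ref{prp:wtdlie} (1) applied to the adjoint action, $[\mfu_{\boldsymbol{n}}, \mfu_{\boldsymbol{m}}] \subset \mfu_{\boldsymbol{n}+\boldsymbol{m}}$. Hence
\[
[W_{a}\mfu, W_{b}\mfu] \subset W_{a+b}\mfu .
\]
So the bracket descends to $\Gr^{W}_{\bullet}\mfu$ and makes it a graded Lie algebra. By Proposition \ref{prp:wtdfilt} (1), the filtration $W_{\bullet}\mfu$ does not depend on $s$; since the bracket on each graded quotient is induced from the bracket of $\mfu$, the graded Lie algebra $\Gr^{W}_{\bullet}\mfu$ is independent of $s$.

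Next, the $\mbZ^{S}$-grading. By Proposition \ref{prp:wtdfilt} (2), the action of $G$ on $\Gr^{W}_{m}\mfu$ factors through $R$. So $\mbG_{m}^{S}$ acts on it through $\prod_{w\in S} w \colon \mbG_{m}^{S} \to R$, which involves no section, and we get an intrinsic decomposition $\Gr^{W}_{m}\mfu = \bigoplus_{\boldsymbol{m}} (\Gr^{W}_{m}\mfu)_{\boldsymbol{m}}$. I would check that this agrees with the decomposition coming from the choice of $s$. The isomorphism
\[
\Gr^{W}_{m}\mfu \cong \bigoplus_{|\boldsymbol{m}|=m}\mfu_{\boldsymbol{m}}
\]
is induced by inclusion, and it is $\mbG_{m}^{S}$-equivariant for the $\tilde w$-action. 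On the quotient, the $\tilde w$-action coincides with the $w$-action, because $\tilde w$ and $w$ differ only by the projection $G \to R$ and the action factors through $R$. Consequently $(\Gr^{W}_{m}\mfu)_{\boldsymbol{m}} \cong \mfu_{\boldsymbol{m}}$ when $|\boldsymbol{m}|=m$, and it is zero otherwise. Therefore the $\boldsymbol{m}$-th component of the whole graded algebra is $(\Gr^{W}_{|\boldsymbol{m}|}\mfu)_{\boldsymbol{m}}$.

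Because the group $\mbG_{m}^{S}$ acts by Lie algebra automorphisms, the bracket respects the $\mbZ^{S}$-grading. Finally, Proposition \ref{prp:wtdlie} (3) gives $\mfu_{\boldsymbol{m}}=0$ unless $\boldsymbol{m}\in\mbZ^{S}_{<0}$, which yields the vanishing statement. The only delicate point is the compatibility in the second step: the intrinsic $R$-action on the graded pieces must match the section-dependent splitting. I expect this to be handled exactly by the factorization in Proposition \ref{prp:wtdfilt} (2).
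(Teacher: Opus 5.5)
Your proposal is correct and takes essentially the same route as the paper, which states the corollary as an immediate consequence of Propositions \ref{prp:wtdlie} and \ref{prp:wtdfilt} without writing out a proof. Your argument supplies the detail the paper leaves implicit: since $\tilde{w}=s\circ w$ and the $G$-action on $\Gr^{W}_{m}\mfu$ factors through $R$, the splitting of $\Gr^{W}_{m}\mfu$ induced by the section agrees with the intrinsic $\mbG_{m}^{S}$-decomposition.
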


\begin{remark}\label{rmk:proalgebraic}
One can generalize the notion of negatively weighted extension to the case where $G$ is \emph{proalgebraic}. All results extend verbatim to the proalgebraic case by standard limit arguments. One can also generalize the notion of weight filtration on a $G$-module to the case of a projective limit of $G$-modules, such as a Lie algebra of a proalgebraic group $G$ which is a negatively weighted extension of $R$.  
\end{remark}

\subsection{Weighted completion of profinite group}\label{sec:2.2}

In this subsection, we introduce the notion of weighted completion of a profinite group with certain additional data, explain Theorem \ref{thm:wtdLie} and its corollaries. Throughout this subsection, suppose the following data $(\Gamma,R,S, \rho)$ are given:

\begin{enumerate}
    \item $\Gamma$: a profinite group,
    \item $R$: a reductive algebraic group over $\mbQ_{p}$,
    \item $S= \{w \colon \mathbb{G}_{m} \to R \}$: a nonempty finite set of central cocharacters, and
    \item $\rho \colon \Gamma \to R(\mbQ_{p})$: a continuous homomorphism whose image is Zariski-dense.
\end{enumerate}  

\begin{example}[cf. {\cite[\textsection 7]{HM03}}]\label{ex:MTM}
    One typical example of such data is given as follows.
    \begin{enumerate}
        \item $\Gamma$ is the Galois group of the maximal $p$-ramified extension of $\mbQ$,
        \item $R \coloneqq \mbG_{m}$,
        \item $S$ only consists of one character $w \colon   \mbG_{m} \to \mbG_{m}; x \mapsto x^{-2}$, and  
        \item $\rho \colon \Gamma \to R(\mathbb{Q}_{p})=\mbQ_{p}^{\times}$ is the $p$-adic cyclotomic character.
    \end{enumerate}
    Here, we take the inverse of the square of the standard character  to make the representation-theoretic weight compatible with the weight defined via Frobenius.
\end{example}

\begin{example-definition}\label{ex:MCMM}
    Let $E$ be an elliptic curve over an imaginary quadratic field $K$ having complex multiplication by $\mcO_{K}$, and $p$ a prime that splits into two primes in $K$. We write $S_{E,p}$ for a set of primes of $K$ consisting of those at which $E$ has bad reduction, $S_{p}(K)=\{ \p, \bar{\p} \}$ and the set of archimedean primes. We then obtain the following data: 

    \begin{enumerate}
        \item $\Gamma$ is the Galois group of the maximal extension of $K$ unramified outside $S_{E,p}$,
        \item $R \coloneqq \mbG_{m}^{2}$,
        \item $S=\{ w_{\p}, w_{\bar{\p}} \}$ is the set of two central cocharacters of $\mbG_{m}^{2}$ defined by
        \[
         w_{\p}(x)=(x^{-1},1)
         \quad \text{and} \quad
         w_{\bar{\p}}(x)=(1, x^{-1}),
         \] 
        \item $\rho \colon G_{K} \to R(\mbQ_{p})$ is the Galois representation attached to $V_{p}(E)=V_{\p}(E) \oplus V_{\bar{\p}}(E)$.
    \end{enumerate}

    One can construct another data as follows: Let $\mu_{K}$ be the group of roots of unity contained in $K$. We have an injective homomorphism 
\[
 \mu_{K} \subset K^{\times} \subset (K \otimes \mbQ_{p})^{\times} \xrightarrow{\sim} K_{\p}^{\times} \times K_{\bar{\p}}^{\times} =R(\mbQ_{p})
\] If we regard $\mu_{K}$ as a (constant) closed subgroup of $\mbG_{m}^{2}$, the quotient $\mathbb{G}_{m}^{2}/\mu_{K}$ is again isomorphic to $\mbG_{m}^{2}$, because the homomorphism
\[
\mbG_{m}^{2} \to \mbG_{m}^{2}; (x,y) \mapsto (xy, (xy^{-1})^{\frac{w_{K}}{2}}) 
\] is faithfully flat and has $\mu_{K}$ as its kernel. We note that the Galois representation
\[
 \Gamma \xrightarrow{\rho_{E,p}}  \mbG_{m}^{2}(\mbQ_{p}) \to (\mbG_{m}^{2}/\mu_{K})(\mbQ_{p})
\] is unramified outside $p$, since the kernel of this representation corresponds to the maximal abelian extension of $K$ unramified outside $p$. Moreover, it does not depend on the choice of $E$. We obtain the following data, which we refer to as the \emph{data associated with $K$}:
\begin{enumerate}
    \item $\Gamma$ is the Galois group of the maximal extension of $K$ unramified outside $p$,
    \item $R \coloneqq \mathbb{G}_{m}^{2}/\mu_{K}$,
    \item $S$ is the set of two cocharacters induced by $\{ w_{\p}, w_{\bar{\p}} \}$, and
    \item $\rho \colon \Gamma \to R(\mbQ_{p})$ is the representation obtained above.
\end{enumerate}
\end{example-definition}

The \emph{weighted completion} is a universal proalgebraic group $\mcG$ that fits into the exact sequence
\[
1 \to \mcU \to \mcG \to R \to 1
\] which is a negatively weighted extension of $R$ with respect to $S$, together with a lift of $\rho$ to $\mcG$: 

\begin{definition}\label{dfn:wtdcmpl}
    The weighted completion of $(\Gamma, R, S, \rho)$ is a pair $(\mcG, \tilde{\rho})$ that consists of 
    \begin{itemize}
        \item A proalgebraic group $\mathcal{G}$ over $\mbQ_{p}$, which is a negatively weighted extension of $R$ with respect to $S$ by a prounipotent group $\mathcal{U}$, and 
        \item a continuous homomorphism $\tilde{\rho} \colon \Gamma \to \mathcal{G}(\mbQ_{p})$ that lifts $\rho$.
    \end{itemize}

    Moreover, the pair is assumed to satisfy the following universal property: Let $(G, \phi)$ be an arbitrary pair of a proalgebraic group over $\mbQ_{p}$, which is a negatively weighted extension of $R$ with respect to $S$ by a prounipotent group, and a continuous homomorphism $\phi \colon \Gamma \to G(\mbQ_{p})$ that lifts $\rho$. Then there exists a unique homomorphism $\Phi \colon \mathcal{G} \to G$ over $R$ satisfying $\phi=\Phi \circ \tilde{\rho}$.
\end{definition}

The universal property of the weighted completion characterizes the pair $(\mcG, \tilde{\rho})$ up to unique isomorphism. Moreover, a similar argument to the proof of \cite[Proposition 4.4]{HM03} shows that the weighted completion of $(\Gamma, R, S, \rho)$ always exists.

Hain and Matsumoto relate the continuous cohomology groups of the Lie algebra $\mfu$ of the prounipotent radical of the weighted completion of $(\Gamma,R,S,\rho)$ with the continuous cochain cohomology groups of $\Gamma$ when $S$ consists of one cocharacter \cite[Theorem 4.8 and Theorem 4.9]{HM03}, and their proof is applicable to the setting of the present paper. In the following theorem, we refer the reader to \cite[4.1]{HM03} for the continuous cochain cohomology of $\mfu$ and to \cite[Chapter I\hspace{-.01em}I, \textsection 7]{NSW08} for details on the  continuous cochain cohomology of profinite groups, though we identify them with the \'etale cohomology groups when considering the data associated with $K$ by \cite[Chapter I\hspace{-.01em}I, Proposition 2.9]{Mi06}.

We prepare some notation before stating the theorem. Let $V$ be a nonzero $R$-module, and decompose it into the direct sum 
$
V=\bigoplus_{\boldsymbol{m} \in \mbZ^{S}} V_{\boldsymbol{m}}.
$ Each $V_{\boldsymbol{m}}$ is an $R$-module since our cocharacters are central. If $V=V_{\boldsymbol{n}}$ holds, then $\boldsymbol{n}$ is called the weight of $V$.

\begin{theorem}[cf. {\cite[Theorem 4.8, Theorem 4.9 and Corollary 4.10]{HM03}}]\label{thm:wtdLie}
    Let $\{ V_{\alpha} \}_{\alpha}$ be a set of representatives of the isomorphism classes of finite-dimensional irreducible representations of $R$ and $n(\alpha) \in \mbZ^{S}$ the weight of $V_{\alpha}$ for each $\alpha$. Write $\mfu$ for the Lie algebra of the prounipotent radical of the weighted completion of $(\Gamma, R, S, \rho)$. 
    \begin{enumerate}
        \item If $H^{1}_{\mathrm{cts}}(\Gamma, V_{\alpha})$ is finite-dimensional for every $\alpha$ with $n(\alpha) \in \mbZ^{S}_{>0}$, we have 
    \begin{align*}
        &H^{1}_{\mathrm{cts}}(\mfu) \cong \bigoplus_{n(\alpha) \in \mbZ^{S}_{>0}} H^{1}_{\mathrm{cts}}(\Gamma, V_{\alpha}^{\ast}) \otimes V_{\alpha} \quad \text{and} \\
        &H_{1}(\mfu) \cong \varprojlim_{F} \bigoplus_{\alpha \in F} H^{1}_{\mathrm{cts}}(\Gamma, V_{\alpha})^{\ast} \otimes V_{\alpha} 
    \end{align*} where $( \quad )^{\ast}$ denotes the dual and $F$ runs over the finite subsets of $\mbZ^{S}_{<0}$. More precisely, $W_{0}H^{1}(\mfu)$ vanishes and there is an $R$-module isomorphism
    \[
    \Gr^{W}_{m} H^{1}_{\cts}(\mfu) \cong \bigoplus_{\substack{n(\alpha) \in \mbZ^{S}_{>0}, \\ |n(\alpha)|=m}} H^{1}_{\mathrm{cts}}(\Gamma, V_{\alpha}^{\ast}) \otimes V_{\alpha}
    \] for each $m>0$.
    \item There is an injective $R$-module homomorphism
     \[
     \Phi \colon
    H^{2}_{\mathrm{cts}}(\mfu) \hookrightarrow \bigoplus_{\substack{n(\alpha) \in \mbZ^{S}_{>1}}} H^{2}_{\mathrm{cts}}(\Gamma, V_{\alpha}^{\ast}) \otimes V_{\alpha}.
    \]  More precisely, we have
    \[
    \Gr^{W}_{m} H^{2}_{\cts}(\mfu) \hookrightarrow \bigoplus_{\substack{n(\alpha) \in \mbZ^{S}_{>1}, \\ |n(\alpha)|=m}} H^{2}_{\mathrm{cts}}(\Gamma, V_{\alpha}^{\ast}) \otimes V_{\alpha}
    \] for each $m>1$.
    \item Suppose that $H^{2}_{\mathrm{cts}}(\Gamma, V_{\alpha}^{\ast})$ vanishes for every $\alpha \in \mbZ^{S}_{>0}$ with $|n(\alpha)| \geq 2$. Then $\Gr^{W}_{\bullet} \mfu$ is a free $\mbZ^{S}$-graded Lie algebra, and any lift of a basis of $\Gr^{W}_{\bullet} H_{1}(\mfu)$ gives a free generating set.
    \end{enumerate} 
\end{theorem}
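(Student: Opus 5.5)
The plan follows Hain--Matsumoto's proof of \cite[Theorems 4.8, 4.9, Corollary 4.10]{HM03}, with the single weight cocharacter replaced by the finite set $S$ of central cocharacters. The main tool is the universal property of $\mcG$ applied to extensions by abelian unipotent groups, which controls $H^{1}$ and $H^{2}$ of $\mfu$.

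\emph{Step 1: $H^{1}$.} For an irreducible $R$-module $V_{\alpha}$ of weight $n(\alpha)$, I will take $V=V_{\alpha}^{\ast}$ and consider the split extensions
\[
1 \to V_{\alpha}^{\ast} \to V_{\alpha}^{\ast}\rtimes R \to R \to 1,
\]
regarding $V_{\alpha}^{\ast}$ as a vector group. Such an extension is negatively weighted with respect to $S$ exactly when $n(\alpha)\in\mbZ^{S}_{>0}$. By the universal property, lifts $\Gamma \to (V_{\alpha}^{\ast}\rtimes R)(\mbQ_{p})$ of $\rho$ correspond bijectively to homomorphisms $\mcG \to V_{\alpha}^{\ast}\rtimes R$ over $R$. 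Modulo conjugation by $V_{\alpha}^{\ast}$, the former are $H^{1}_{\cts}(\Gamma,V_{\alpha}^{\ast})$. Modulo the same conjugation, the latter are $\mathrm{Hom}_{R}(H_{1}(\mfu),V_{\alpha}^{\ast})$, because a section of $\mcG\to R$ exists and is unique up to $\mcU$-conjugacy \cite{Ho81}.

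Hence
\[
\mathrm{Hom}_{R}(H_{1}(\mfu),V_{\alpha}^{\ast}) \cong H^{1}_{\cts}(\Gamma,V_{\alpha}^{\ast})
\]
for $n(\alpha)\in\mbZ^{S}_{>0}$, and it vanishes for all other $\alpha$ by Proposition~\ref{prp:wtdlie}(3). Summing over isotypic components gives $H^{1}_{\cts}(\mfu)$. The finite-dimensionality hypothesis is used to pass to the dual $H_{1}(\mfu)$ as a projective limit over finite $F$. Taking graded pieces via Proposition~\ref{prp:wtdfilt} gives the statement for $\Gr^{W}_{m}$, and the vanishing of $W_{0}H^{1}(\mfu)$.

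\emph{Step 2: $H^{2}$.} Classes in $H^{2}_{\cts}(\mfu)$, restricted to an isotypic part, correspond to central extensions
\[
0\to V_{\alpha}^{\ast}\to \mfu'\to \mfu\to 0
\]
of $R$-equivariant pronilpotent Lie algebras, or equivalently to extensions $1\to V_{\alpha}^{\ast}\to\mcG'\to\mcG\to1$. Given such a class, I pull $\mcG'$ back along $\tilde\rho$ to obtain an extension of $\Gamma$ by $V_{\alpha}^{\ast}$, that is, a class in $H^{2}_{\cts}(\Gamma,V_{\alpha}^{\ast})$; this defines $\Phi$.

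For injectivity, suppose the pulled-back class vanishes. Then $\tilde\rho$ lifts to $\Gamma\to\mcG'(\mbQ_{p})$. I then check that $\mcG'$ is again a negatively weighted extension of $R$: since $V_{\alpha}^{\ast}$ has negative weight, $H_{1}$ of the new unipotent radical still has only negative weights. The universal property then yields a homomorphism $\mcG\to\mcG'$, and uniqueness shows it is a section of $\mcG'\to\mcG$, so the extension splits.

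The weight constraint $n(\alpha)\in\mbZ^{S}_{>1}$ holds because $H^{2}(\mfu)$ is a quotient of $\Lambda^{2}H^{1}(\mfu)$, or more precisely is computed by the Chevalley--Eilenberg complex. All weights of $H^{1}(\mfu)$ lie in $\mbZ^{S}_{>0}$, so those of $H^{2}(\mfu)$ have every component at least $2$.

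\emph{Step 3: freeness.} Under the vanishing hypothesis, Step 2 gives $H^{2}_{\cts}(\mfu)=0$. I pass to $\Gr^{W}_{\bullet}\mfu$. The weight filtration is split by the choice of section, so $\mfu\cong\prod_{m}\Gr^{W}_{m}\mfu$ compatibly with the $\mbZ^{S}$-grading, and therefore $H^{2}(\Gr^{W}_{\bullet}\mfu)=0$. I then take the free $\mbZ^{S}$-graded Lie algebra $\mathbb{L}$ on a lift of a basis of $H_{1}$ and the surjection $\mathbb{L}\to\Gr^{W}_{\bullet}\mfu$. This map is surjective by graded Nakayama, since all degrees are negative and each graded piece is finite-dimensional. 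Its kernel $J$ satisfies
\[
(J/[\mathbb{L},J])^{\ast}\cong H^{2}(\Gr^{W}_{\bullet}\mfu)=0
\]
by the Hochschild--Serre five-term sequence, and graded Nakayama again gives $J=0$.

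I expect the main obstacle to be Step 2. The delicate points are that $\mcG'$ really is negatively weighted with respect to all of $S$ at once, not just the total cocharacter $w_{S}$, and that the continuity and limit arguments for the proalgebraic $\mcG$ go through. Since the cocharacters are central, these should reduce coordinatewise to Hain--Matsumoto's one-cocharacter argument.
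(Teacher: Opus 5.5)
Your proposal is correct and is essentially the paper's proof. The paper simply defers to Hain--Matsumoto's arguments \cite[Theorems 4.8, 4.9 and Lemma 4.11]{HM03}: the universal property applied to $V_{\alpha}^{\ast}\rtimes R$ and to extensions of $\mathcal{G}$ by $V_{\alpha}^{\ast}$, then freeness from $H^{2}=0$. Its only added remark is that $H^{n}_{\cts}(\mfu)$ is concentrated in weights all of whose components are at least $n$, and you carry out exactly this adaptation.

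One small correction to Step 2: $H^{2}(\mfu)$ is not in general a quotient of $\Lambda^{2}H^{1}(\mfu)$, and the Heisenberg Lie algebra is a counterexample. The weight bound should instead come from the Chevalley--Eilenberg complex, using that $\mfu$ itself has weights only in $\mbZ^{S}_{<0}$ (Proposition \ref{prp:wtdlie}(3)). Then $H^{2}(\mfu)$ is a subquotient of $\Lambda^{2}$ of the continuous dual of $\mfu$, so its weights lie in $\mbZ^{S}_{\geq 2}$.
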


Here we remark the weight filtration on $\mfu$ passes to its cohomology by \cite[Proposition 4.5]{HM03}. The last assertion follows from the other two, together with \cite[Lemma 4.11]{HM03}.  The proof of the first two assertions is more or less the same as the original one, noting that $H^{n}_{\cts}(\mfu)$ have only $\mbZ^{S}_{>n}$-components (which is an analogue of \cite[Proposition 4.5]{HM03}) for $n>0$.

\begin{example}[cf. {\cite[7.2]{HM03}}]\label{ex:MTMLie}
    Let $(\Gamma, R, S, \rho)$ be the data given in Example \ref{ex:MTM}. We can take representatives of finite-dimensional irreducible representations to be powers of the standard character, and they are Tate twists $\{ \mbQ_{p}(m) \}_{m \in \mbZ}$ when viewed as $\Gamma$-modules. The assumption on Theorem \ref{thm:wtdLie}(1) is that the cohomology group
    \[
    H^{1}_{\cts}(\Gamma, \mbQ_{p}(m)) 
    \cong 
    H^{1}_{\et}(\mbZ[1/p], \mbQ_{p}(m))
    \] is finite-dimensional for every $m>0$. Soul\'e \cite[page 376]{So81} proved that
    \[
    \dim_{\mbQ_{p}} H^{1}_{\et}(\mbZ[1/p], \mbQ_{p}(m))=
    \begin{cases}
        1 & (m=1,3,5, \dots) \\
        0 & (m=2,4,6, \dots)
    \end{cases}
    \] and
    $
    \dim_{\mbQ_{p}} H^{2}_{\et}(\mbZ[1/p], \mbQ_{p}(m))=0
    $ for every $m>1$. Hence we can apply Theorem \ref{thm:wtdLie} to conclude that the graded Lie algebra associated with the prounipotent radical of the weighted completion is freely generated by one element in each  degree $-2, -6,-10, \dots$.
\end{example}

Let us consider the data associated with $K$ in Example-Definition \ref{ex:MCMM}. We can take representatives of finite-dimensional irreducible representations of $R=\mbG_{m}^{2}/\mu_{K}$ as
\[
\{ \mbQ_{p}(\boldsymbol{m}) \mid \boldsymbol{m}=(m_{1},m_{2}) \in \mbZ^{2}, m_{1} \equiv m_{2} \bmod w_{K} \}.
\] Here, by abuse of notation, $\mbQ_{p}(\boldsymbol{m})$ also denotes a one-dimensional representation of $\mbG_{m}^{2}/\mu_{K}$ on which the action of $\mbG_{m}$ through $w_{\p}$ (resp. $w_{\bar{\p}}$) is given by the $(-m_{1})$-th (resp. $(-m_{2})$-th) power of the standard character. Theorem \ref{thm:wtdLie} then gives us the following result:

\begin{theorem}\label{thm:MCCMLie}
   Let $(\Gamma, R, S, \rho)$ be the data associated with $K$, and write $\mathfrak{u}$ for the Lie algebra of the prounipotent radical of its weighted completion. Suppose that
   \[
   H^{2}_{\mathrm{cts}}(\Gamma, \mbQ_{p}(\boldsymbol{m}))
   \cong
   H^{2}_{\et}(\mcO_{K}[1/p], \mbQ_{p}(\boldsymbol{m}))
   \] vanishes for every $\boldsymbol{m} \in I_{K}$ ($I_{K}$ was defined at the beginning of \textsection \ref{sec:1.2}). Then the bigraded Lie algebra $\Gr^{W}_{\bullet} \mathfrak{u}$ is freely generated by one element in each bidegree in $-I_{K} \coloneqq \{ -\boldsymbol{m} \mid \boldsymbol{m} \in I_{K} \}$ and two elements in bidegree $(-1,-1)$.
\end{theorem}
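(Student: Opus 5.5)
We apply Theorem \ref{thm:wtdLie} to the data associated with $K$, using the representatives $\{\mbQ_{p}(\boldsymbol{m})\}$ with $m_{1} \equiv m_{2} \bmod w_{K}$. With the convention fixed above, $w_{\p}$ (resp. $w_{\bar{\p}}$) acts on $\mbQ_{p}(\boldsymbol{m})$ by the $(-m_{1})$-th (resp. $(-m_{2})$-th) power. So the weight of $\mbQ_{p}(\boldsymbol{m})$ is $n(\alpha)=-\boldsymbol{m}$, and its dual $\mbQ_{p}(-\boldsymbol{m})$ has weight $\boldsymbol{m}$. Hence the representations $V_{\alpha}$ with $n(\alpha) \in \mbZ^{S}_{>0}$ are exactly the $\mbQ_{p}(-\boldsymbol{m})$ with $\boldsymbol{m} \in \mbZ^{2}_{>0}$ and $m_{1}\equiv m_{2} \bmod w_{K}$. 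For these, $V_{\alpha}^{\ast}=\mbQ_{p}(\boldsymbol{m})$. The index set is therefore $I_{K} \cup \{(1,1)\}$.

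The first step is to check the finiteness hypothesis of Theorem \ref{thm:wtdLie}(1). The group $H^{1}_{\cts}(\Gamma,\mbQ_{p}(\boldsymbol{m})) \cong H^{1}_{\et}(\mcO_{K}[1/p],\mbQ_{p}(\boldsymbol{m}))$ is finite-dimensional, since $\Gamma$ has finitely generated cohomology with finite coefficients for $p$-ramified extensions, and the $\mbZ_{p}$-cohomology is finitely generated. Theorem \ref{thm:wtdLie}(1) then gives
\[
\Gr^{W}_{\bullet} H_{1}(\mfu) \cong \bigoplus_{\boldsymbol{m}} H^{1}_{\et}(\mcO_{K}[1/p],\mbQ_{p}(\boldsymbol{m}))^{\ast} \otimes \mbQ_{p}(\boldsymbol{m}),
\]
placed in bidegree $-\boldsymbol{m}$. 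The dimensions come from Remark \ref{rmk:EPchar}. For $\boldsymbol{m}=(1,1)$ we have $\mbQ_{p}(1,1)=\mbQ_{p}(1)$, the cyclotomic twist, since $\chi_{1}\chi_{2}$ is the cyclotomic character by the Weil pairing. Kummer theory gives dimension $2$, because $\mcO_{K}[1/p]^{\times}\otimes\mbQ_{p}$ is spanned by generators of $\p$ and $\bar{\p}$, and $K$ has class number one. For $\boldsymbol{m}\in I_{K}$, the hypothesis $H^{2}=0$ together with the Euler--Poincar\'e computation $\chi=-1$ gives dimension exactly $1$. So $H_{1}$ has one basis element in each bidegree $-\boldsymbol{m}$ with $\boldsymbol{m}\in I_{K}$, and two in bidegree $(-1,-1)$.

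Next we verify the hypothesis of Theorem \ref{thm:wtdLie}(3): $H^{2}_{\cts}(\Gamma,V_{\alpha}^{\ast})=0$ for every $n(\alpha)\in\mbZ^{S}_{>0}$ with $|n(\alpha)|\ge 2$. Here $|n(\alpha)|=m_{1}+m_{2}$, and $|\boldsymbol{m}|\ge 2$ with $\boldsymbol{m}$ positive covers all of $I_{K}$ and also $(1,1)$. For $\boldsymbol{m}\in I_{K}$ this is the standing assumption. For $(1,1)$ we must show $H^{2}_{\et}(\mcO_{K}[1/p],\mbQ_{p}(1))=0$. We do this with the Kummer sequence: $H^{2}$ with $\mbZ_{p}(1)$ coefficients is an extension of $T_{p}\mathrm{Br}$ by $\mathrm{Pic}\otimes\mbZ_{p}$. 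The Picard group is trivial because the class number is one. The $p$-part of the Brauer group of $\mcO_{K}[1/p]$ is the kernel of the sum map $\bigoplus_{v\mid p}\mbQ/\mbZ \to \mbQ/\mbZ$, which is a copy of $\mbQ_{p}/\mbZ_{p}$ since $K$ is imaginary and has no real places. Its Tate module is $\mbZ_{p}$, so after tensoring with $\mbQ_{p}$ this leaves a one-dimensional space rather than zero. The clean way to settle it is the Euler--Poincar\'e characteristic $-1$ for $\mbQ_{p}(1)$: with $H^{0}=0$ and $\dim H^{1}=2$, it forces $\dim H^{2}=1$, and this is the nonvanishing found above. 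This is precisely the expected obstacle. I would handle it by observing that $\mbZ^{S}_{>1}$ in Theorem \ref{thm:wtdLie}(2) means both coordinates are $>1$, read as the relevant range for $H^{2}(\mfu)$, since $H^{2}(\mfu)$ has only components with $|\cdot|>2$ and each coordinate at least $1$. Bidegree $(1,1)$ cannot occur in $H^{2}(\mfu)$, because a cup product of two $H^{1}$ classes has weight at least $(2,2)$. So the vanishing needed is only for the $\boldsymbol{m}$ that can appear in $H^{2}_{\cts}(\mfu)$, all of which lie in $I_{K}$, and $(1,1)$ is excluded.

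Granting this, Theorem \ref{thm:wtdLie}(2) gives $H^{2}_{\cts}(\mfu)=0$. By \cite[Lemma 4.11]{HM03}, $\Gr^{W}_{\bullet}\mfu$ is then free, with free generators given by lifts of a basis of $\Gr^{W}_{\bullet}H_{1}(\mfu)$. Combined with the computation of $H_{1}$ above, this yields the claimed generators: one in each bidegree in $-I_{K}$, and two in bidegree $(-1,-1)$.
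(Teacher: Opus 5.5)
Your proposal is correct and is the paper's argument unpacked. The paper simply cites Theorem \ref{thm:wtdLie}(3) together with Remark \ref{rmk:EPchar}, while you run (1), (2) and \cite[Lemma 4.11]{HM03} directly; in doing so you rightly notice that $H^{2}_{\et}(\mcO_{K}[1/p],\mbQ_{p}(1))$ is one-dimensional, so bidegree $(1,1)$ must be excluded via the range $\mbZ^{S}_{>1}$ of (2) rather than through the literal hypothesis $|n(\alpha)|\geq 2$ of (3), a point the paper's one-line proof passes over. The only repair is your justification of that exclusion: not every class in $H^{2}(\mfu)$ is a cup product, so argue instead that $H^{2}(\mfu)$ is a subquotient of $\Lambda^{2}$ of the continuous dual of $\mfu$, all of whose bidegrees have both coordinates at least $2$ because $\mfu$ lives in $\mbZ^{S}_{<0}$.
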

\begin{proof}
    The assertion follows from Theorem \ref{thm:wtdLie} (3), together with Remark \ref{rmk:EPchar}.
\end{proof}

Lastly, we mention the tannakian interpretation of weighted completion, though is not needed to establish Theorem \ref{thm:main1}. This allows us, for example, to view the weighted completion of the data associated with $K$ as a tannakian fundamental group of the category of finite-dimensional $p$-adic representations of $G_{K}$ unramified outside $p$ with two increasing filtrations. We record the result for the interested reader.

\begin{definition}\label{dfn:wtdmod}
    A \emph{weighted $(\Gamma, R, S, \rho)$-module} is a finite-dimensional $\mbQ_{p}$-vector space $M$ with a continuous $\Gamma$-action, together with a $\Gamma$-stable increasing filtration
    \[
    \cdots  \subset  W_{m-1}^{w}M \subset W_{m}^{w}M \subset \cdots
    \] for each $w \in S$. Moreover, the filtration is assumed to satisfy the following:
    \begin{enumerate}
        \item The filtration is separated and exhaustive, i.e. 
        \[
        \bigcap_{m} W_{m}^{w}M=0 
        \quad \text{and} \quad
        \bigcup_{m} W_{m}^{w}M=M.
        \]
        \item For each $m  \in \mbZ$, the $\Gamma$-action on the graded quotient
        \[
        \Gr_{m}^{w}M \coloneqq W_{m}^{w}M/ W_{m-1}^{w}M
        \] factors through $\rho$ and a homomorphism $R \to \Aut(\Gr_{m}^{w}M)$, which is uniquely determined since $\rho$ has Zariski-dense image. Moreover, the homomorphism
        \[
        \mbG_{m} \xrightarrow{w} R \to \Aut(\Gr_{m}^{w}M)
        \] is given by the $m$-th power of the standard character. 
    \end{enumerate}
\end{definition}

Morphisms between weighted $(\Gamma, R, S, \rho)$-modules are just $\mbQ_{p}$-linear $\Gamma$-equivariant maps.  We also define multivariable filtration as follows.

\begin{definition}\label{dfn:wtdmodfilt}
    Let $M$ be a weighted $(\Gamma, R, S, \rho)$-module. For $\boldsymbol{m}=(m_{w})_{w \in S} \in \mbZ^{S}$, we define the submodule $W_{\boldsymbol{m}}M$ of $M$ by
    \[
    W_{\boldsymbol{m}}M \coloneqq \bigcap_{w \in S} W_{m_{w}}^{w}M.
    \] Moreover, we also define the graded quotient by
    \[
    \Gr^{W}_{\boldsymbol{m}} M \coloneqq W_{\boldsymbol{m}}M/\sum_{w \in S} W_{\boldsymbol{m}-e_{w}}M,
    \] where $e_{w}=(0, \dots, 0,1,0, \dots, 0)$ denotes the $w$-th coordinate vector. This should not be confused with $\Gr^{W}_{n}V$ introduced in the previous section.
\end{definition}

It is easy to show that the $\Gamma$-action on $\Gr^{W}_{\boldsymbol{m}}M$ also factors through a homomorphism $R \to \Aut(\Gr^{W}_{\boldsymbol{m}}M)$, and that the $\mbG_{m}^{S}$-action on it has weight $\boldsymbol{m}$. We now state the tannakian interpretation of the weighted completion of $(\Gamma, R, S, \rho)$.

\begin{theorem}[cf. {{\cite[Proposition 7.2]{HM03B}}}]\label{thm:tannaka}
    The category of weighted $(\Gamma, R, S, \rho)$-modules is a neutral tannakian category over $\mbQ_{p}$, and the forgetful functor gives a fiber functor. Moreover, the associated tannakian fundamental group is isomorphic to the weighted completion of $(\Gamma, R, S, \rho)$.
\end{theorem}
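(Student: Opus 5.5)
The plan is to adapt the proof of \cite[Proposition 7.2]{HM03B} essentially verbatim, inserting a finite family of filtrations where Hain and Matsumoto use a single one. Write $\mathcal{C}$ for the category of weighted $(\Gamma,R,S,\rho)$-modules and $(\mcG,\tilde{\rho})$ for the weighted completion.

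First I will check that $\mathcal{C}$ is a rigid abelian tensor category and that the forgetful functor $\omega$ to $\mbQ_{p}$-vector spaces is exact and faithful. Tensor products and duals get the usual filtrations, namely $W^{w}_{m}(M\otimes N)=\sum_{a+b=m}W^{w}_{a}M\otimes W^{w}_{b}N$ and $W^{w}_{m}M^{\ast}=(W^{w}_{-m-1}M)^{\perp}$. On graded pieces we have $\Gr^{w}(M\otimes N)\cong\Gr^{w}M\otimes\Gr^{w}N$, so these pieces still factor through $R$ with the correct $w$-weights.

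The main obstacle is abelianness: a $\Gamma$-equivariant linear map must be \emph{strict} for every filtration $W^{w}_{\bullet}$, even though morphisms are only required to be $\Gamma$-equivariant. I plan to handle this as follows.
\begin{itemize}
\item For a fixed $w$, the weights on $\Gr^{w}_{m}$ are characterized intrinsically by the Zariski closure of the image of $\Gamma$.
\item I will show that $M$ admits a splitting $M=\bigoplus_{\boldsymbol{m}}M_{\boldsymbol{m}}$ by a $\mbG_{m}^{S}$-action coming from a lift of $\prod w$ into the Zariski closure $G_{M}$ of $\Gamma$ in $\GL(M)$. To do this, note that $G_{M}$ preserves all the filtrations, and its image in $\prod_{w,m}\GL(\Gr^{w}_{m}M)$ is a quotient of $R$ (by Zariski density).
\item The kernel of $G_{M}$ to this product is unipotent, since it acts trivially on the graded pieces of a single filtration.
\item Hence $G_{M}$ is an extension of a quotient of $R$ by a unipotent group. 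A Levi section (\cite[VIII, Theorem 4.3]{Ho81}) then lifts the central cocharacters, giving simultaneous splittings of all $W^{w}_{\bullet}$.
\end{itemize}
For a morphism $f\colon M\to N$, I apply this to $M\oplus N$. The splitting $\mbG_{m}^{S}\to G_{M\oplus N}$ commutes with $f$, because $f$ is $\Gamma$-equivariant and therefore $G_{M\oplus N}$-equivariant. Consequently $f$ preserves the weight decompositions, and strictness follows, since each $W^{w}_{m}$ is the sum of the pieces of $w$-weight $\le m$. Kernels and cokernels then carry the induced filtrations and satisfy the axioms, so $\mathcal{C}$ is neutral tannakian with fiber functor $\omega$.

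Next I identify $\pi_{1}(\mathcal{C},\omega)$ with $\mcG$.
\begin{itemize}
\item \emph{$\mcG$-modules give objects of $\mathcal{C}$.} By Proposition~\ref{prp:wtdfilt} and the decomposition $V=\bigoplus V_{\boldsymbol{m}}$ via lifted cocharacters, every $\mcG$-module $V$ has well-defined filtrations $W^{w}_{m}V=\bigoplus_{m_{w}\le m}V_{\boldsymbol{m}}$, independent of the section. The argument is that of Proposition~\ref{prp:wtdfilt}, using $U$-conjugacy of sections and negativity of weights on $\mfu$ via Proposition~\ref{prp:wtdlie}. Pulling back along $\tilde{\rho}$ gives an object of $\mathcal{C}$. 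This yields a tensor functor $\mathrm{Rep}(\mcG)\to\mathcal{C}$ compatible with fiber functors, hence a homomorphism $\pi_{1}(\mathcal{C},\omega)\to\mcG$.
\item \emph{Objects of $\mathcal{C}$ give $\mcG$-modules.} Conversely, for $M\in\mathcal{C}$ the group $G_{M}$ above is an extension of a quotient $R_{M}$ of $R$ by a unipotent group. Its $H_{1}$ has only negative multi-weights, because $\log$ of unipotent elements lowers each $W^{w}$; I will check the bracket compatibility via Proposition~\ref{prp:wtdlie}(1). Pulling back to $R$, the group $G_{M}\times_{R_{M}}R$ is a negatively weighted extension with a lift of $\rho$.
\item \emph{Equivalence.} The universal property of Definition~\ref{dfn:wtdcmpl} then factors $\Gamma\to G_{M}$ through $\mcG$, so every object comes from $\mathrm{Rep}(\mcG)$. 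Full faithfulness holds because $\tilde{\rho}(\Gamma)$ is Zariski dense in $\mcG$, which follows from the universal property, since the Zariski closure is again a negatively weighted extension. Thus $\mathrm{Rep}(\mcG)\simeq\mathcal{C}$ and $\pi_{1}(\mathcal{C},\omega)\cong\mcG$.
\end{itemize}
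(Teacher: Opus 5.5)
Your proposal is correct. Its skeleton is the paper's: show that the Zariski closure of $\Gamma$ in $\GL(M)$ is a negatively weighted extension, get the $\mcG$-action from universality, and treat the converse via \textsection\ref{sec:2.1}. The key step for handling several cocharacters is different, however.

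The paper reduces to the one-cocharacter argument of \cite[Proposition 7.2]{HM03B}. Using reductivity of $R$, it splits $\Gr^{w_{1}}_{m_{1}}M$ into the bigraded pieces $\Gr^{W}_{\boldsymbol{m}}M$. It deduces that the unipotent kernel of $G\to\prod_{\boldsymbol{m}}\Aut(\Gr^{W}_{\boldsymbol{m}}M)$ equals the kernel for the single filtration $W^{w_{1}}_{\bullet}$, and likewise for $w_{2}$. It then checks negativity one cocharacter at a time by the upper-triangular weight count.

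You never use $\Gr^{W}_{\boldsymbol{m}}$. You take the kernel of $G_{M}\to\prod_{w,m}\GL(\Gr^{w}_{m}M)$, choose a Levi section, and use the commuting lifted cocharacters to split all filtrations simultaneously. That one splitting gives two things:
\begin{itemize}
\item Strictness of morphisms, i.e.\ the abelianness the paper calls easy. Applied to the identity map, it also shows the filtrations are determined by the $\Gamma$-module. This is exactly what ensures that the $\mcG$-module produced by universality recovers $M$ as a weighted module.
\item Negativity of every coordinate of $\Lie U_{M}$ at once.
\end{itemize}
Your fiber product $G_{M}\times_{R_{M}}R$ also makes explicit the passage from the quotient $R_{M}$ to $R$, which the paper leaves implicit.

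Your route is more uniform and self-contained, and it makes explicit the splitting underlying the paper's weight count. The paper's route is shorter and shows directly why the single-cocharacter proof of Hain--Matsumoto carries over verbatim.

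The one assertion you leave unjustified is the Zariski density of $\tilde{\rho}(\Gamma)$ in $\mcG$. It holds because a closed subgroup of $\mcG$ surjecting onto $R$ is again negatively weighted. Choose a Levi section inside that subgroup; it is also a section of $\mcG\to R$, so the subgroup's Lie algebra is a $\mbZ^{S}$-graded subalgebra of $\mfu$ and has only weights in $\mbZ^{S}_{<0}$.
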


\begin{proof}
    The first two assertions are easy. The last assertion also follows by applying the proof of \cite[Proposition 7.2]{HM03B} to  each cocharacter individually. The point is that every weighted $(\Gamma, R, S, \rho)$-module $M$ admits a natural action of the weighted completion $\mcG$ of $(\Gamma, R, S, \rho)$ and, conversely, every representation of $\mcG$ can naturally be regarded as a weighted $(\Gamma, R, S, \rho)$-module. We give a proof of this claim, assuming $S$ consists of two cocharacters $\{ w_{1}, w_{2} \}$ for simplicity. 
    
    Notice that $\Gr^{w_{1}}_{m_{1}} M$ comes equipped with a filtration $\{ \Gr^{w_{1}}_{m_{1}} \left( W^{w_{2}}_{m_{2}} M \right) \}_{m_{2} \in \mbZ}$. Here, we note that $\Gr^{w_{i}}_{\bullet}$ is exact for $i=1,2$. Since $R$ is reductive and the filtration is given by $R$-modules, $\Gr^{w_{1}}_{m_{1}} M$ splits into the sum of associated graded quotients, each of which is given by
    \begin{align*}
    \Gr^{w_{1}}_{m_{1}} \left( W^{w_{2}}_{m_{2}} M \right)/\Gr^{w_{1}}_{m_{1}} \left( W^{w_{2}}_{m_{2}-1} M \right)
    &=(W_{\boldsymbol{m}}M/W_{\boldsymbol{m}-(1,0)}M)/(W_{\boldsymbol{m}-(0,1)}M/W_{\boldsymbol{m}-(1,1)}M)  \\
    & \cong W_{\boldsymbol{m}}M/(W_{\boldsymbol{m}-(1,0)}M+W_{\boldsymbol{m}-(0,1)}M)=\Gr_{\boldsymbol{m}}^{W}M.
    \end{align*}
    The same argument applies upon interchanging $\Gr^{w_{1}}_{\bullet}$ and $\Gr^{w_{2}}_{\bullet}$.

    Now let $G$ be the Zariski closure of (the image of) $\Gamma$ in $\Aut(M)$, $R'$ the image of $G$ under the map
    \[
    G  \to \prod_{\boldsymbol{m} \in \mbZ^{S}}\Aut(\Gr^{W}_{\boldsymbol{m}}M) 
    \] and $U$ the kernel of this map. 
    Note that there is a surjective map $R \to R'$, and we hence obtain the $\mbG_{m}^{S}$-action on $H_{1}(U)$. We want to show that $H_{1}(U)_{\boldsymbol{m}}$ is trivial for every $\boldsymbol{m} \in \mbZ^{S} \setminus \mbZ^{S}_{<0}$. To prove this, we note that $U$ is the same as the kernel of
    \[
    G \to \Aut(\Gr^{w_{1}}_{\bullet}M)
    \] by what we proved in the previous paragraph. Then $H_{1}(U)$ has only negative components at least with respect to $w_{1}$, since $G$ can be regarded as a subgroup of upper triangular matrices, where the weights on its diagonal entries got increase as one goes toward the lower right corner. For example, if the pullback of $G$ along $w_{1}$ is given by the form
    \[
\begin{pmatrix}
x^{2} & * \\
0       & x^{4}
\end{pmatrix}
\quad \text{($x$ is the coordinate of $\mbG_{m}$)},
\] then $U$, which corresponds to $(1,2)$-entries, must have weight $-2$. Since the argument works for $w_{2}$, the weighted completion of $(\Gamma, R, S, \rho)$ acts on $M$ by its universality. Conversely, if an arbitrary representation of $\mcG$ is given, then it becomes a $\Gamma$-module through $\Gamma \to \mcG(\mbQ_{p})$. Moreover, results of \textsection \ref{sec:2.1} show that it has desired weight filtrations. This concludes the proof of the claim. 
\end{proof}

\section{Comparison of Galois image and weighted completion}\label{sec:3}

In this section, we show Theorem \ref{thm:main1}. We follow the strategy of Hain and Matsumoto. The proof proceeds in the following three steps, and we accordingly divide the section into three subsections.

\begin{enumerate}
    \item We define a proalgebraic variant $\rho_{X,p}^{\un}$ of the pro-$p$ outer Galois representation by considering the unipotent completion of $\pi_{1}^{(p)}(X_{\bar{K}})$. By a result of Hain and Matsumoto, $\mfg_{X} \otimes \mbQ_{p}$ does not change even if we consider such a variant. 
    \item We consider a suitable quotient of the Zariski closure of the image of $\rho_{X,p}^{\un}$. We then prove the resulting representation is unramified outside $p$, and it is a negatively weighted extension of $\mbG_{m}^{2}/\mu_{K}$. 
    \item Applying Theorem \ref{thm:MCCMLie} yields the desired result. 
\end{enumerate}

\subsection{Galois action on prounipotent fundamental group}\label{sec:3.1}

Recall that $G_{K}$ comes equipped with the weight filtration induced by the pro-$p$ outer Galois representation $\rho_{X,p}$. In what follows, we consider another filtration on $G_{K}$ using its action on the continuous unipotent completion of the \'etale fundamental group of $X_{\bar{K}}$. We write $\pi_{1}^{\un}(X_{\bar{K}})$ for the continuous $\mbQ_{p}$-unipotent completion of $\pi_{1}^{\et}(X_{\bar{K}})$ \cite[A.2]{HM03}, and $\mathcal{P}(X_{\bar{K}})$ for its Lie algebra. Write the lower central series of $\mathcal{P}(X_{\bar{K}})$ by
\[ 
\mcP(X_{\bar{K}}) \coloneqq L^{1}\mcP(X_{\bar{K}}) \supset \cdots \supset  L^{m+1}\mcP(X_{\bar{K}}) \coloneqq [L^{m}\mcP(X_{\bar{K}}), L^{1}\mcP(X_{\bar{K}})] \supset \cdots.
\] 
Note that the abelianization of $\mcP(X_{\bar{K}})$ is naturally identified with $V_{p}(E)$, and the Lie algebra 
\[
\p(X_{\bar{K}})= \bigoplus_{m>0} \p(X_{\bar{K}})_{m} \coloneqq \bigoplus_{m>0} L^{m}\mcP(X_{\bar{K}})/L^{m+1}\mcP(X_{\bar{K}})
\] is a free graded Lie algebra of rank two over $\mbQ_{p}$. Moreover, we can recover $\mcP(X_{\bar{K}})$ from $\p(X_{\bar{K}})$ by taking the completion with respect to the lower central series. We define two proalgebraic groups $\Aut(\pi_{1}^{\un}(X_{\bar{K}}))$ and $\Out(\pi_{1}^{\un}(X_{\bar{K}}))$ over $\mbQ_{p}$ by
\begin{align*}
&\Aut(\pi_{1}^{\un}(X_{\bar{K}})) \coloneqq \varprojlim_{m} \Aut(\mcP(X_{\bar{K}})/L^{m}\mcP(X_{\bar{K}})) \quad \text{and} \\
&\Out(\pi_{1}^{\un}(X_{\bar{K}})) \coloneqq \varprojlim_{m} \Out(\mcP(X_{\bar{K}})/L^{m}\mcP(X_{\bar{K}})),
\end{align*} respectively. Here $\Aut(\mcP(X_{\bar{K}})/L^{m}\mcP(X_{\bar{K}}))$ denotes the automorphism group scheme of the unipotent Lie algebra $\mcP(X_{\bar{K}})/L^{m}\mcP(X_{\bar{K}})$, and $ \Out(\mcP(X_{\bar{K}})/L^{m}\mcP(X_{\bar{K}}))$ is its quotient by the inner automorphism group \cite[Proposition A.8 and Corollary A.9]{HM03}. We then have two natural identifications
\begin{align*}
&\Lie \Aut(\pi_{1}^{\un}(X_{\bar{K}}))=\Der \mcP(X_{\bar{K}}) \quad \text{and} \quad \\
& \Lie \Out(\pi_{1}^{\un}(X_{\bar{K}}))=\ODer \mcP(X_{\bar{K}}) \coloneqq \Der \mcP(X_{\bar{K}}) / \Inn \mcP(X_{\bar{K}}),
\end{align*} where $\Der \mcP(X_{\bar{K}})$ denotes the Lie algebra of continuous derivations of $\mcP(X_{\bar{K}})$ and $\Inn \mcP(X_{\bar{K}})$ is that of inner derivations. We also have homomorphisms 
\begin{align}\label{eq:toabel}
\Aut(\pi_{1}^{\un}(X_{\bar{K}})) \to \GL(V_{p}(E)) \quad \text{and} \quad 
\Out(\pi_{1}^{\un}(X_{\bar{K}})) \to \GL(V_{p}(E))
\end{align} corresponding to the action on $V_{p}(E)$. Both kernels of (\ref{eq:toabel}) are prounipotent \cite[Proposition A.8]{HM03}, and the Lie algebras are given by
\[
\Ker( \Der \mcP(X_{\bar{K}}) \to \End V_{p}(E))
\quad \text{and} \quad 
\Ker( \Der \mcP(X_{\bar{K}}) \to \End V_{p}(E))/\Inn \mcP(X_{\bar{K}}),
\] respectively. By the functoriality of unipotent completion, the pro-$p$ outer Galois representation $\rho_{X,p}$ induces a continuous homomorphism
\[
\rho_{X,p}^{\un} \colon G_{K} \to \Out(\pi_{1}^{\un}(X_{\bar{K}}))(\mbQ_{p}).
\]
Hence we obtain another filtration $\{ \mathcal{F}^{m}G_{K}  \}_{m>0}$ on $G_{K}$ by
\[
\mathcal{F}^{m}G_{K} \coloneqq \Ker(
    G_{K} \xrightarrow{\rho_{X,p}^{\un}}  \Out(\pi_{1}^{\un}(X_{\bar{K}}))(\mbQ_{p}) \to \Out(\mcP(X_{\bar{K}})/L^{m+1}\mcP(X_{\bar{K}}))
),
\] though the associated graded Lie algebra is known to be isomorphic to that of $\{ F^{m}G_{K}  \}_{m>0}$: 

\begin{proposition}[{\cite[Proposition 8.2]{HM03}}]\label{prp:profvsalg}
    We have $F^{m}G_{K} \subset \mathcal{F}^{m}G_{K}$ and the homomorphism
    \[
    \mfg_{X,m}=\Gr^{m}_{F} G_{K} \to \Gr^{m}_{\mcF} G_{K}
    \] has finite kernel and cokernel for every $m>0$. In particular, we have an isomorphism
    \[
    \mfg_{X} \otimes \mbQ_{p} \xrightarrow{\sim} \Gr^{\bullet}_{\mcF} G_{K} \otimes \mbQ_{p}
    \] between graded Lie algebras over $\mbQ_{p}$.
\end{proposition}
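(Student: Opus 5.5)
Both filtrations are pulled back from the action on lower central quotients, so the plan is to compare $\pi_1/L^{m+1}\pi_1$ with $\mcP/L^{m+1}\mcP$ (writing $\pi_1=\pi_{1}^{(p)}(X_{\bar K})$, $\mcP=\mcP(X_{\bar K})$) and reduce everything to graded pieces. There is a natural continuous map $\pi_1\to\pi_{1}^{\un}(X_{\bar K})(\mbQ_p)$, compatible with the Galois actions, sending $L^{m}\pi_1$ into $\exp L^{m}\mcP$. Its graded pieces $\Gr^m_L\pi_1\otimes\mbQ_p\to\p(X_{\bar K})_m$ are isomorphisms. This holds because $\pi_1$ is a free pro-$p$ group of rank two, so $\Gr_L\pi_1$ is the free $\mbZ_p$-Lie algebra on $T_p(E)$, which is torsion-free and becomes $\p(X_{\bar K})$ after tensoring with $\mbQ_p$.

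\textbf{Step 1: inclusion $F^m\subset\mcF^m$.} If $\sigma\in F^mG_K$, then $\sigma$ acts on $\pi_1/L^{m+1}\pi_1$ as an inner automorphism. By functoriality of unipotent completion, an inner automorphism of $\pi_1/L^{m+1}$ induces an inner automorphism of the unipotent completion of that quotient, which is $\mcP/L^{m+1}\mcP$. Hence $\sigma\in\mcF^mG_K$.

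\textbf{Step 2: describing the graded pieces.} For $\sigma\in F^m$, choose a lift acting trivially on $\pi_1/L^{m+1}$ and set $\delta_\sigma(x)=\sigma(x)x^{-1}\in\Gr^{m+1}_L\pi_1$ for $x\in\pi_1^{\ab}$. This gives an injective homomorphism
\[
\Gr^m_F G_K\hookrightarrow \mathrm{Hom}(\pi_1^{\ab},\Gr^{m+1}_L\pi_1)/(\text{inner part coming from }\Gr^m_L\pi_1).
\]
The same construction gives
\[
\Gr^m_{\mcF}G_K\hookrightarrow \mathrm{Hom}(V_p(E),\p_{m+1})/\mathrm{ad}(\p_m).
\]
These maps are compatible, and the target on the $\mcF$ side is the rationalization of the target on the $F$ side. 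Since $\Gr_L\pi_1$ is torsion-free and free Lie algebras of rank $\ge 2$ have trivial centre, the quotient by inner derivations is well behaved.

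\textbf{Step 3: finiteness.} This is the main obstacle. Kernel: an element of $\Gr^m_F$ mapping to zero lies in the torsion of the integral target. That torsion is finite, being bounded by the torsion in $\mathrm{Hom}/\mathrm{ad}(\Gr^m_L\pi_1)$, a quotient of finitely generated $\mbZ_p$-modules. So the kernel is finite; in fact it is trivial whenever that cokernel is torsion-free.

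Cokernel: I would argue that $\Gr^m_{\mcF}G_K$ is a $\mbZ_p$-module, namely $\mcF^m/\mcF^{m+1}$ of a pro-$p$-ish group, sitting inside a $\mbQ_p$-vector space. It is finitely generated over $\mbZ_p$, because $\mcF^1G_K$ is an open subgroup's image in a $p$-adic Lie group. Therefore it is a lattice in its $\mbQ_p$-span.

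Next, I would show $\mcF^m\cap F^{m-1}$ and $F^m$ differ by finite index. Suppose $\sigma\in\mcF^m$. Its action on $\pi_1/L^{m+1}$ becomes inner after rationalization, so for some power $p^N$ (with $N$ uniform, controlled by the torsion in the integral $\Out$ tower) we get $\sigma^{p^N}\in F^m$. An induction on $m$ then gives $p^N\Gr^m_{\mcF}\subset\operatorname{Im}(\Gr^m_F)$, so the cokernel is finite.

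Tensoring with $\mbQ_p$ yields the isomorphism of graded Lie algebras. The brackets are compatible because both are induced by commutators in $G_K$.

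The delicate point is the uniform $p$-power bound in Step 3. I would try to obtain it from the fact that each $\Out(\pi_1/L^{m+1})$ is a $p$-adic analytic group whose kernel of rationalization is finite.
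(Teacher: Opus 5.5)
The paper gives no argument of its own here. It imports the statement from \cite[Proposition 8.2]{HM03}, so your proposal is an independent reconstruction, and its skeleton is sound. Step 1 is correct. So are the two embeddings of Step 2, which rest on $\Gr_L\pi_1$ being the free $\mbZ_p$-Lie algebra on $T_p(E)$: this makes the centre of $\pi_1/L^{m+1}\pi_1$ equal to $L^m\pi_1/L^{m+1}\pi_1$, so changing the lift only changes $\delta_\sigma$ by $\mathrm{ad}(\Gr^m_L\pi_1)$. The kernel half of Step 3 is also correct.

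The step you call delicate needs no $p$-adic analytic input; it follows formally from the kernel bound you already proved. Your claim that $F^m$ has finite index in $\mcF^m\cap F^{m-1}$ is literally the finiteness of $\ker(\Gr^{m-1}_F G_K\to\Gr^{m-1}_{\mcF}G_K)=(F^{m-1}\cap\mcF^{m})/F^{m}$. Write $A_k=\mathrm{Hom}(T_p(E),\Gr^{k+1}_L\pi_1)/\mathrm{ad}(\Gr^k_L\pi_1)$, and let $p^{N_k}$ be the exponent of the torsion of $A_k$; it kills $\ker(\Gr^k_F\to\Gr^k_{\mcF})$.

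Take $\sigma\in\mcF^mG_K$. Then $\sigma\in\mcF^1G_K=F^1G_K$, since $T_p(E)\subset V_p(E)$. If $\tau\in F^k\cap\mcF^m$ with $k<m$, then $\tau\in\mcF^{k+1}$, so its class in $\Gr^k_F$ lies in that kernel, and hence $\tau^{p^{N_k}}\in F^{k+1}\cap\mcF^m$. Iterating over $k=1,\dots,m-1$ gives $\sigma^{p^N}\in F^m$ with $N=N_1+\cdots+N_{m-1}$. Hence $p^N\Gr^m_{\mcF}G_K\subset\Img(\Gr^m_FG_K)$.

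For finite generation, replace your justification (``open subgroup's image'') by compactness. The group $\Gr^m_{\mcF}G_K$ is the image of the compact group $\mcF^mG_K$ under a continuous homomorphism into the finite-dimensional space $A_m\otimes\mbQ_p$. It is therefore a compact, hence finitely generated, $\mbZ_p$-submodule. The cokernel is then a quotient of $\Gr^m_{\mcF}G_K/p^N$, so it is finite.

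Your suggested alternative is also true: the kernel of $\Out(\pi_1/L^{m+1}\pi_1)\to\Out(\mcP/L^{m+1}\mcP)(\mbQ_p)$ is finite. For $\Lambda=\pi_1/L^{m+1}\pi_1$ inside its Malcev completion with centre $Z$, that kernel is $\mathrm{Norm}(\Lambda)/(\Lambda Z)$. The normalizer modulo $Z$ is compact and contains $\Lambda/L^m\Lambda$ as an open subgroup. This gives the exponent directly without iterating, but it needs that extra argument. Finally, the paper records that $\mfg_{X,m}$ is torsion-free, so your finite kernel is in fact zero.
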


\subsection{Structure of certain outer automorphism group scheme}\label{sec:3.2}

First, we define a suitable subgroup of $\Out(\pi_{1}^{\un}(X_{\bar{K}}))$ containing the image of $\rho_{X,p}^{\un}$. Observe that the Galois image has the following three properties:

\begin{itemize}
    \item The group $\mu_{K}=\Aut(X)$ has a well-defined homomorphism
    \begin{align}\label{eq:muK}
    \mu_{K} \to \Out(\pi_{1}^{\un}(X_{\bar{K}}))(\mbQ_{p})
    \end{align}
    by the functoriality of unipotent completion. We regard $\mu_{K}$ as a constant closed subgroup scheme of $\Out(\pi_{1}^{\un}(X_{\bar{K}}))$. The Galois image centralizes this subgroup.
    \item The decomposition $V_{p}(E)=V_{\p}(E) \oplus V_{\bar{\p}}(E)$ defines a closed immersion 
    \begin{align}\label{eq:decomp}
    \mbG_{m}^{2} \xrightarrow{\sim} \GL(V_{\p}(E)) \times  \GL(V_{\bar{\p}}(E)) \hookrightarrow  \GL(V_{p}(E)).
    \end{align} The Galois action  on $V_{p}(E)$ factors through this subgroup.
    \item Since we take the Weierstrass tangential basepoint, we have a homomorphism
    \[
    \mbZ_{p}(1) \hookrightarrow \pi_{1}^{(p)}(X_{\bar{K}}),
    \quad \text{and hence} \quad 
    \mbG_{a} \hookrightarrow \pi_{1}^{\un}(X_{\bar{K}})
    \] by the functoriality of unipotent completion. The Galois action preserves the image of this map. Passing to Lie algebras, we obtain
    \begin{align*}
    \mbQ_{p} \cong \Lie(\mbG_{a}) \hookrightarrow \mcP(X_{\bar{K}}),
    \end{align*} and let $z$ be the image of $1 \in \mbQ_{p}$. Then $z$ spans $L^{2}\mcP(X_{\bar{K}})/L^{3}\mcP(X_{\bar{K}})$.
\end{itemize}

\begin{definition}\label{dfn:asterisk}\,
    \begin{enumerate}
        \item For each $m>0$, let $\Aut^{\ast}(\mcP(X_{\bar{K}})/L^{m+1}\mcP(X_{\bar{K}}))$ be closed subgroup of the automorphism group scheme of the Lie algebra $\mcP(X_{\bar{K}})/L^{m+1}\mcP(X_{\bar{K}})$ that acts $V_{p}(E)$ through the subgroup (\ref{eq:decomp}) and preserves the subspace $\mbQ_{p} \cdot z \subset \mcP(X_{\bar{K}})/L^{m+1}\mcP(X_{\bar{K}})$. We define 
        \[
        \Aut^{\ast}(\pi_{1}^{\un}(X_{\bar{K}})) \coloneqq \varprojlim_{m} \Aut^{\ast}(\mcP(X_{\bar{K}})/L^{m}\mcP(X_{\bar{K}})),
        \] and write $\Der^{\ast}\mcP(X_{\bar{K}})$ for its Lie algebra. 
        \item We write $\Out^{\ast}(\pi_{1}^{\un}(X_{\bar{K}}))$ for the image of $\Aut^{\ast}(\pi_{1}^{\un}(X_{\bar{K}}))$ in  $\Out(\pi_{1}^{\un}(X_{\bar{K}}))$, and write $\ODer^{\ast}\mcP(X_{\bar{K}})$ for its Lie algebra. Moreover, we define $\Out^{\ast}_{\mu_{K}}(\pi_{1}^{\un}(X_{\bar{K}}))$ to be the centralizer of $\mu_{K}$ inside $\Out^{\ast}(\pi_{1}^{\un}(X_{\bar{K}}))$.
    \end{enumerate} 
\end{definition}

Note that $\Out^{\ast}_{\mu_{K}}(\pi_{1}^{\un}(X_{\bar{K}}))$ contains $\mu_{K}$. The quotient $\Out^{\ast}_{\mu_{K}}(\pi_{1}^{\un}(X_{\bar{K}}))/\mu_{K}$ surjects onto $\mbG_{m}^{2}/\mu_{K}$, and its kernel is prounipotent. The point is that it even defines a negatively weighted extension, which allows us to apply several results obtained in the previous section:

\begin{proposition}\label{prp:fundamental}
    The following assertions hold.
    \begin{enumerate}
        \item The image of $\rho_{X,p}^{\un} \colon G_{K} \to \Out(\pi_{1}^{\un}(X_{\bar{K}}))(\mbQ_{p})$ is contained in $\Out^{\ast}_{\mu_{K}}(\pi_{1}^{\un}(X_{\bar{K}}))(\mbQ_{p})$. 
        \item Both $\Aut^{\ast}(\pi_{1}^{\un}(X_{\bar{K}}))$ and $\Out^{\ast}(\pi_{1}^{\un}(X_{\bar{K}}))$ are negatively weighted extensions of $\mbG_{m}^{2}$ with respect to two central cocharacters $\{w_{\p}, w_{\bar{\p}} \}$  defined by $x \mapsto (x^{-1},1)$ and $x \mapsto (1,x^{-1})$, respectively (cf. Example-Definition \ref{ex:MCMM}).  
        \item $\Out^{\ast}_{\mu_{K}}(\pi_{1}^{\un}(X_{\bar{K}}))/\mu_{K}$ is a negatively weighted extension of $\mbG_{m}^{2}/\mu_{K}$ with respect to the two central cocharacters induced by $w_{\p}$ and $w_{\bar{\p}}$.
        \item The weight filtration on $\Der^{\ast}(\mcP(X_{\bar{K}}))$ is given by
        \begin{align*}
        &W_{-m}\Der^{\ast} \mcP(X_{\bar{K}})=\Ker(\Der^{\ast} \mcP(X_{\bar{K}}) \to  \Der^{\ast} \mcP(X_{\bar{K}})/L^{m+1}\mcP(X_{\bar{K}})).
        \end{align*}
        Moreover, the map $\Der^{\ast} \mcP(X_{\bar{K}}) \to \ODer^{\ast} \mcP(X_{\bar{K}})$ induces an isomorphism
        \[
            \Der^{\ast} \mcP(X_{\bar{K}})/\mbQ_{p} \cdot z \xrightarrow{\sim} \ODer^{\ast}  \mcP(X_{\bar{K}})
        \] where $\mbQ_{p} \cdot z$ is regarded as a one-dimensional subspace of $\Inn  \mcP(X_{\bar{K}})$, from which one computes the weight filtration on $\ODer^{\ast} \mcP(X_{\bar{K}})$.
    \end{enumerate}
\end{proposition}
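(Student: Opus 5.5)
Our plan is to prove the four assertions in the order (1), (4), (2), (3). The reason is that the description of the weight filtration in (4) is the main tool for (2) and (3).

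\textbf{Assertion (1).} This is essentially the list of three properties recorded just before Definition \ref{dfn:asterisk}. Since the basepoint is $K$-rational, $\rho_{X,p}$ lifts to an actual action on $\pi_{1}^{(p)}(X_{\bar{K}})$. This action, and hence the induced action on $\mcP(X_{\bar{K}})$, has three features:
\begin{itemize}
\item it preserves the image of the inertia $\mbZ_{p}(1)$ at the tangential basepoint, so it preserves $\mbQ_{p}\cdot z$;
\item it acts on $V_{p}(E)$ through $\mbG_{m}^{2}$ by the CM splitting;
\item it commutes outerly with $\mu_{K}=\Aut(X)$, because the automorphisms of $X$ are defined over $K$ (class number one and CM by $\mcO_{K}$ defined over $K$).
\end{itemize}
Consequently each Galois element lands in $\Aut^{\ast}$, and its image in $\Out$ centralizes $\mu_{K}$.

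\textbf{Assertion (4).} Fix the section $s\colon\mbG_{m}^{2}\to\Aut^{\ast}(\pi_{1}^{\un}(X_{\bar{K}}))$ as follows. Choose a $\mbG_{m}^{2}$-eigenbasis $a,b$ of $V_{p}(E)$ with $a\in V_{\p}(E)$ and $b\in V_{\bar{\p}}(E)$. Lift $a,b$ to generators of $\mcP(X_{\bar{K}})$, chosen so that $z$ corresponds to the standard product formula (up to the appropriate normalization, $z=[a,b]$ modulo higher terms; one may choose lifts making this exact). Then let $(x,y)$ act by scaling $a\mapsto xa$ and $b\mapsto yb$, extended multiplicatively on the completed free Lie algebra. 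This action preserves $\mbQ_{p}z$ because $[a,b]$ is an eigenvector, so it lies in $\Aut^{\ast}$. Under this section, $\mcP(X_{\bar{K}})$ decomposes as $\prod_{n}\p(X_{\bar{K}})_{n}$, with $\p_{n}$ having bidegrees $(-i,-j)$, $i+j=n$, for the cocharacters $w_{\p},w_{\bar{\p}}$; the sign comes from $w(x)=(x^{-1},1)$. A derivation of bidegree $(-i,-j)$ with $i+j=m$ sends $\p_{n}$ into $\p_{n+m}$. Hence $W_{-m}\Der^{\ast}$ consists exactly of the derivations inducing zero on $\mcP/L^{m+1}$, which is the stated formula. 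Moreover the weight-$0$ part consists of derivations preserving each $\p_{n}$. The $\ast$-condition forces such a derivation to act diagonally on $V_{p}(E)$, and a derivation of the free Lie algebra is determined by its values on $a,b$, so the weight-$0$ part is $\Lie(\mbG_{m}^{2})$.

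For the $\Out$ statement, recall that inner derivations $\mathrm{ad}(u)$ are injective in $u$, because $\mcP$ has trivial center (a free Lie algebra of rank two). We must then check which inner derivations lie in $\Der^{\ast}$. Certainly $\mathrm{ad}(u)$ acts trivially on $V_{p}(E)$ only when $u\in L^{2}$. The condition that $\mathrm{ad}(u)$ preserves $\mbQ_{p}z$ means $[u,z]\in\mbQ_{p}z$; since $[u,z]\in L^{3}$, this forces $[u,z]=0$. The centralizer of $z$ in a free Lie algebra is $\mbQ_{p}z$ (rank-one centralizers in free Lie algebras), which we expect to extend to the completion by a graded argument on leading terms. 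Hence $\Inn\cap\Der^{\ast}=\mbQ_{p}\cdot\mathrm{ad}(z)$, and $\Der^{\ast}/\mbQ_{p}z\cong\ODer^{\ast}$, where surjectivity holds by the definition of $\Out^{\ast}$ as an image.

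\textbf{Assertion (2).} By (4), the kernel of $\Aut^{\ast}\to\mbG_{m}^{2}$ is prounipotent, with Lie algebra $W_{-1}\Der^{\ast}$, whose bigraded pieces all lie in $\mbZ_{\le0}^{2}$ with total degree $\le -1$. The difficulty is that the definition requires strictly negative bidegrees in \emph{both} coordinates for $H_{1}(U)$. A derivation of bidegree $(-m,0)$ would send $b\mapsto 0$ and $a\mapsto$ an element of $\p$ built only from $a$, hence zero for $m\ge 1$ (the only such bracket element is $a$ itself, which has degree one). It seems the remaining task is to show that no derivation has bidegree $(-i,0)$ or $(0,-j)$ with positive $i$ or $j$; we will argue this directly, as bidegree $(-i,0)$ sends $a$ to a multiple of a Lie word with $i+1$ copies of $a$ and none of $b$, which vanishes for $i\ge1$, and symmetrically. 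Thus all weights of $\Lie(U)$, and therefore of $H_{1}(U)$, lie in $\mbZ_{<0}^{2}$. For $\Out^{\ast}$ we pass to the quotient by $\mbQ_{p}z$, which has weight $(-1,-1)$, and quotients of negatively weighted extensions remain negatively weighted.

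\textbf{Assertion (3) and the expected obstacle.} Here $\mu_{K}$ embeds in $\mbG_{m}^{2}$, acting on $V_{\p}$ and $V_{\bar{\p}}$ by $\zeta$ and $\bar{\zeta}$. The centralizer of this reductive (finite) subgroup in a negatively weighted extension is again an extension of the centralizer $\mbG_{m}^{2}$ by the $\mu_{K}$-invariants of $U$. Invariants of a unipotent group under a finite group are unipotent, and their $H_{1}$ is a quotient of $H_{1}(U)^{\mu_{K}}$, which keeps negative weights. Dividing by the central $\mu_{K}$ then gives an extension of $\mbG_{m}^{2}/\mu_{K}$ by the same unipotent group, so the cocharacters descend. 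We expect the main obstacle to be the careful handling of the centralizer-of-$z$ step in (4), together with the choice of a section in $\Aut^{\ast}$ compatible with $\mu_{K}$. For the latter, note that the section above commutes with $\mu_{K}$ when $\mu_{K}$ is realized inside the scaling torus, and this needs verification.
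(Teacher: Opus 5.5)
There is a genuine gap in your proof of (2), at exactly the point you single out as ``the difficulty''. In your conventions, $a$ has bidegree $(-1,0)$ and $b$ has bidegree $(0,-1)$. A derivation $d$ of bidegree $(-m,0)$ with $m\geq 1$ does kill $a$, because $d(a)$ would be a Lie word in $a$ alone of length $m+1\geq 2$. However, $d(b)$ lies in bidegree $(-m,-1)$, which is spanned by $\mathrm{ad}(a)^{m}(b)\neq 0$. So your assertion that such a $d$ ``sends $b\mapsto 0$'' is false, and the ``direct'' argument you then give only treats $d(a)$.

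Concretely, setting $D_{m}(a)=0$ and $D_{m}(b)=\mathrm{ad}(a)^{m}(b)$ defines a nonzero continuous derivation of bidegree $(-m,0)$ that acts trivially on $V_{p}(E)$; for $m=1$ it is just $\mathrm{ad}(a)$. The $D_{m}$ commute pairwise. Hence, for the larger group of automorphisms that merely act on $V_{p}(E)$ through $\mbG_{m}^{2}$, they give nonzero classes of weight $(-m,0)\notin\mbZ^{2}_{<0}$ in $H_{1}$ of the unipotent radical. So no degree count can prove (2): negative weightedness genuinely depends on the condition involving $z$, which your argument for (2) never uses.

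The missing step, which is the core of the paper's proof, goes as follows. An element $d$ of the unipotent part of $\Der^{\ast}\mcP(X_{\bar{K}})$ maps $z$ into $L^{3}\mcP(X_{\bar{K}})$ and preserves $\mbQ_{p}\cdot z$, so $d(z)=0$. Now take $d$ homogeneous of bidegree $(-m,0)$, so $d(a)=0$ as above. Write $z\equiv c[a,b] \bmod L^{3}$ with $c\neq 0$, and note $d(L^{3})\subset L^{m+3}$. Reading $d(z)=0$ in $\p(X_{\bar{K}})_{m+2}$ then gives $[a,d(b)]=0$. Since $\p(X_{\bar{K}})$ is free on $a,b$, the centralizer of $a$ is $\mbQ_{p}a$, while $d(b)$ has degree $m+1\geq 2$. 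Therefore $d(b)=0$, and symmetrically for bidegree $(0,-m)$. This is exactly what excludes $D_{m}$, since $D_{m}([a,b])=\mathrm{ad}(a)^{m+1}(b)\neq 0$.

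With this step inserted, the rest of your outline follows the paper: the eigenbasis section, the description of $W_{-m}$, the computation $\Inn\cap\Der^{\ast}=\mbQ_{p}\cdot\mathrm{ad}(z)$ via the centralizer of $z$, and deducing (3) from (2). Two minor slips:
\begin{enumerate}
\item Every inner derivation acts trivially on $V_{p}(E)$, so your sentence about $\mathrm{ad}(u)$ and $L^{2}$ is wrong, though you do not need it.
\item In (3) the natural surjection goes the other way, $H_{1}(U^{\mu_{K}})\twoheadrightarrow H_{1}(U)^{\mu_{K}}$. The weight conclusion still holds, because once (2) is known all weights of $\Lie(U)$ already lie in $\mbZ^{2}_{<0}$.
\end{enumerate}
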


\begin{proof}
    The first assertion is immediate, and the third follows from the second. Fix a (topological) basis $\{ x,y \}$ of $\mcP(X_{\bar{K}})$ such that their images in $V_{p}(E)$ generate $V_{\p}(E)$ and $V_{\bar{\p}}(E)$, respectively. It induces a lift of the $\mbG_{m}^{2}$-action on $V_{p}(E)$ to $\mcP(X_{\bar{K}})$. It is easy to confirm that this $\mbG_{m}^{2}$-action corresponds to the decomposition $\mcP(X_{\bar{K}})=\prod_{\boldsymbol{m}} \mcP(X_{\bar{K}})_{\boldsymbol{m}}$, where $\mcP(X_{\bar{K}})_{\boldsymbol{m}}$ is the subspace spanned by Lie monomials of bidegree $\boldsymbol{m}$ with respect to $x$ and $y$. Passing to Lie algebras, we obtain an exact sequence
    \[
    0 \to 
    \Der^{\ast}_{1}\mcP(X_{\bar{K}}) \to 
    \Der^{\ast}\mcP(X_{\bar{K}}) \to \Lie(\mbG_{m}^{2}) \to 0.
    \] Here, the Lie algebra $\Der^{\ast}_{1}\mcP(X_{\bar{K}})$ is given by
    \[
        \Der^{\ast}_{1}\mcP(X_{\bar{K}}) \coloneqq \{ d \in  \Der \mcP(X_{\bar{K}}) \mid d(\mcP(X_{\bar{K}})) \subset L^{2}\mcP(X_{\bar{K}}),\text{ and } d(z)=0 \},
    \] and it decomposes into the product of 
    \[
        \Der^{\ast}_{1}\mcP(X_{\bar{K}})_{-\boldsymbol{m}}=\{ d \in \Der^{\ast}_{1}\mcP(X_{\bar{K}}) \mid d(x) \in \mcP(X_{\bar{K}})_{\boldsymbol{m}+(1,0)},  d(y) \in \mcP(X_{\bar{K}})_{\boldsymbol{m}+(0,1)}   \} \quad (\boldsymbol{m} \in \mbZ^{2})
    \]  according to the action of $\mbG_{m}^{2}$. To prove $\Aut^{\ast}(\pi_{1}^{\un}(X_{\bar{K}}))$ is a negatively weighted extension of $\mbG_{m}^{2}$ with respect to the two cocharacters, we must show $\Der^{\ast}_{1}\mcP(X_{\bar{K}})_{-\boldsymbol{m}}=\{ 0 \}$ for every $\boldsymbol{m} \not \in \mbZ^{2}_{>0}$. First, it is clear that $\Der^{\ast}_{1}\mcP(X_{\bar{K}})_{-\boldsymbol{m}}$ is trivial whenever both $\mcP(X_{\bar{K}})_{\boldsymbol{m}+(1,0)}$ and $\mcP(X_{\bar{K}})_{\boldsymbol{m}+(0,1)}$ are so. Hence the desired vanishing follows at least for every $\boldsymbol{m}=(m_{1},m_{2}) \in \mbZ^{2}$ such that either $m_{1}$ or $m_{2}$ is negative. Moreover, since every element of $\Der^{\ast}_{1}\mcP(X_{\bar{K}})$ acts trivially on $V_{p}(E)=\mcP(X_{\bar{K}})/L^{2}\mcP(X_{\bar{K}})$ and we have 
    \[
    L^{2}\mcP(X_{\bar{K}})= \prod_{\substack{\boldsymbol{m} \in \mbZ^{2}_{>0}, \\ |\boldsymbol{m}| \geq 2}} 
    \mcP(X_{\bar{K}})_{\boldsymbol{m}},
    \] the $(0,0)$-component is also trivial. Now let $d$ be a continuous derivation contained in the $(m,0)$-component for some $m>0$. We have $d(x)=0$ as $\mcP(X_{\bar{K}})_{(m+1,0)}$ is trivial. Moreover, since $d(y)$ has bidegree $(m,1)$, it is a scalar multiple of $[x, [x, \dots, [x,y]]]$. Noting that $d(z)=0$ and $z$ is a nonzero scalar multiple of $[x,y]$ modulo $L^{3}\mcP(X_{\bar{K}})$, it follows that 
    \[
    d([x,y])=[d(x),y]+[x,d(y)]=[x, d(y)]=0 
    \quad \text{in} \quad 
    L^{m+2}\mcP(X_{\bar{K}})/L^{m+3}\mcP(X_{\bar{K}}).
    \] This implies $d(y)=0$ as desired since $\p(X_{\bar{K}})$ is a free graded Lie algebra of rank two generated by (the image of) $\{ x, y \}$. Moreover, since we have
    \[
    L^{m+1}\mcP(X_{\bar{K}})= \prod_{\substack{\boldsymbol{m} \in \mbZ^{2}_{>0}, \\ |\boldsymbol{m}| \geq m+1}}
    \mcP(X_{\bar{K}})_{\boldsymbol{m}}
    \] for every $m>0$, it follows that
    \[
    W_{-m} \Der^{\ast}\mcP(X_{\bar{K}})=\{ d \in  \Der^{\ast}\mcP(X_{\bar{K}}) \mid d(\mcP(X_{\bar{K}})) \subset L^{m+1}\mcP(X_{\bar{K}}) \}
    \] as desired. Since the centralizer of $\mbQ_{p} \cdot z$ in $\mcP(X_{\bar{K}})$ coincides with itself, the intersection of $\Inn \mcP(X_{\bar{K}})$ and $\Der^{\ast}_{1} P(X_{\bar{K}})$ is given by $\mbQ_{p} \cdot z$. This proves the latter assertion of (4), and the latter assertion of (2)  follows from this.
\end{proof}

\begin{remark}
    The latter assertion of Proposition \ref{prp:fundamental} (4) may be regarded as a proalgebraic analogue of an argument used in \cite[(4.3)]{Na95} to lift the subgroup $\rho_{X,p}(F^{3}G_{K}) \subset \Out(\pi_{1}^{(p)}(X_{\bar{K}}))$ to $\Aut(\pi_{1}^{(p)}(X_{\bar{K}}))$.
\end{remark}

Write $\mathcal{G}_{K}$ for the Zariski closure of the image of the continuous homomorphism
\[
G_{K} \xrightarrow{\rho_{X,p}^{\un}}  \Out^{\ast}_{\mu_{K}}(\pi_{1}^{\un}(X_{\bar{K}})) (\mbQ_{p}) \xrightarrow{\bmod \mu_{K}} \left( \Out^{\ast}_{\mu_{K}}(\pi_{1}^{\un}(X_{\bar{K}}))/\mu_{K} \right) (\mbQ_{p}),
\] $\mathcal{U}_{K}$ for the prounipotent radical of $\mathcal{G}_{K}$ and $\mathfrak{u}_{K}$ for the Lie algebra of $\mathcal{U}_{K}$.

\begin{proposition}\label{prp:fundamental2}
    The following assertions hold.
    \begin{enumerate}
    \item The  homomorphism
    $
        G_{K} \to \mathcal{G}_{K}(\mbQ_{p})
    $ is unramified outside $p$.
    \item The $G_{K}$-action on $V_{p}(E)$ induces a negatively weighted extension
    \[
    1 \to \mathcal{U}_{K} \to \mathcal{G}_{K} \to \mbG_{m}^{2}/\mu_{K} \to 1
    \] with respect to the two central cocharacters induced from $\{ w_{\p}, w_{\bar{\p}}\}$.
    \end{enumerate}
\end{proposition}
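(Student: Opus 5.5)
The plan is to prove the two assertions separately. Assertion (1) is a local statement at each prime $v \nmid p$ of $K$. Assertion (2) is essentially formal once we know that $\mathcal{G}_{K}$ sits inside the negatively weighted extension $\Out^{\ast}_{\mu_{K}}(\pi_{1}^{\un}(X_{\bar{K}}))/\mu_{K}$ of Proposition \ref{prp:fundamental} (3).

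For (1), fix a prime $v$ of $K$ not above $p$ and an inertia subgroup $I_{v} \subset G_{K}$. I will use the elementary fact that the outer representation is independent of the basepoint, so only the outer action matters.

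If $E$ has good reduction at $v$, then $X$ extends to a smooth curve over $\mcO_{K_{v}}$ with an étale divisor at infinity. The Oda--Tamagawa type specialization theorem for the tame/pro-$p$ fundamental group, with $p \ne$ the residue characteristic, then shows that $\rho_{X,p}$, and hence $\rho^{\un}_{X,p}$, is trivial on $I_{v}$.

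If $E$ has bad reduction at $v$, it still has potentially good reduction, since it has CM. So there is a finite extension $M$ of the maximal unramified extension $K_{v}^{\mathrm{ur}}$ and an elliptic curve $E'$ over $K_{v}$ with good reduction such that $E_{M} \cong E'_{M}$. Note that the Weierstrass basepoints may differ, but this is irrelevant for outer actions. Consequently $E$ is the twist of $E'$ by a cocycle $I_{v} \to \Aut(E_{\bar{K}_{v}}) = \mu_{K}$. Comparing the two outer actions on the common geometric fundamental group, the action of $g \in I_{v}$ for $X$ equals the action for $X'$, which is trivial by the good-reduction case, composed with the image of an element of $\mu_{K}$ under (\ref{eq:muK}). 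Hence $\rho^{\un}_{X,p}(I_{v}) \subset \mu_{K}$, and its image in $(\Out^{\ast}_{\mu_{K}}/\mu_{K})(\mbQ_{p})$ is trivial. This twisting comparison is the step I expect to need the most care: one must check that the identification of $\pi_{1}(X_{\bar{K}})$ with $\pi_{1}(X'_{\bar{K}})$ makes the cocycle act exactly through (\ref{eq:muK}).

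For (2), write $\mathcal{O} \coloneqq \Out^{\ast}_{\mu_{K}}(\pi_{1}^{\un}(X_{\bar{K}}))/\mu_{K}$. By Proposition \ref{prp:fundamental} (3) it is a negatively weighted extension $1 \to \mcU \to \mathcal{O} \to \mbG_{m}^{2}/\mu_{K} \to 1$, and $\mathcal{G}_{K}$ is a closed subgroup of it.

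First I show that the composite $\mathcal{G}_{K} \to \mbG_{m}^{2}/\mu_{K}$ is surjective. Its image is a closed algebraic subgroup containing the image of $G_{K}$ under $V_{p}(E)$. That image is an open subgroup of $(\mbZ_{p}^{\times})^{2}$ modulo $\mu_{K}$, by the theory of complex multiplication. An infinite open subgroup of the $\mbQ_{p}$-points of a split torus is Zariski dense, so the image is everything.

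Next, the kernel $\mathcal{U}_{K}' \coloneqq \mathcal{G}_{K} \cap \mcU$ is a closed subgroup of a prounipotent group, hence prounipotent. Since the quotient is reductive, $\mathcal{U}'_{K}$ is the prounipotent radical $\mathcal{U}_{K}$.

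Finally, for negativity of weights I choose a section $s \colon \mbG_{m}^{2}/\mu_{K} \to \mathcal{G}_{K}$ (Levi decomposition for the proalgebraic group $\mathcal{G}_{K}$, cf. Remark \ref{rmk:proalgebraic}). Composed with the inclusion, $s$ is also a section of $\mathcal{O}$. With respect to it, $\mfu_{K} \subset \Lie(\mcU)$ is a graded subspace, stable under the torus. By Proposition \ref{prp:wtdlie} (3), $\Lie(\mcU)$ has components only in $\mbZ^{2}_{<0}$, so $\mfu_{K}$ does too. Therefore its quotient $H_{1}(\mathcal{U}_{K})$ has only components in $\mbZ^{2}_{<0}$ for the cocharacters induced by $w_{\p}$ and $w_{\bar{\p}}$, which is exactly Definition \ref{dfn:wtdext}.
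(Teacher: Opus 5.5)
Your proof is correct, but it is organized differently from the paper's in both parts.

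For (1), the paper simply invokes \cite[Lemma 2.13]{Is23+}. You instead give a self-contained argument: specialization at primes of good reduction, and at bad primes a twist comparison placing $\rho^{\un}_{X,p}(I_{v})$ in the image of (\ref{eq:muK}). One step there is stated too quickly. Potential good reduction alone does not produce a twist $E'$ of $E$ with good reduction; for a general elliptic curve such a twist need not exist. What makes it work here is that $I_{v}$ acts on $V_{p}(E)$ through $\mu_{K}=\Aut(E)$. This holds, for example, because $\rho_{E,p} \bmod \mu_{K}$ is unramified outside $p$, as noted in Example--Definition \ref{ex:MCMM}. The inertia character can then be twisted away, most easily over $K_{v}^{\mathrm{ur}}$, where a homomorphism $I_{v}\to\mu_{K}$ is already a twisting cocycle; N\'eron--Ogg--Shafarevich then gives good reduction of $E'$.

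For (2), the paper passes through the Zariski closure $\tilde{\mcG}_{K}$ of the image in $\Out^{\ast}_{\mu_{K}}$ itself. It obtains $1\to\tilde{\mcU}_{K}\to\tilde{\mcG}_{K}\to\mbG_{m}^{2}\to 1$ and checks, via torsion-freeness of the prounipotent kernel, that $\tilde{\mcG}_{K}$ contains the image of (\ref{eq:muK}). It then identifies $\mcG_{K}\cong\tilde{\mcG}_{K}/\mu_{K}$, hence $\mcU_{K}=\tilde{\mcU}_{K}$, and quotes Proposition \ref{prp:fundamental}(3) for negativity.

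You work directly inside $\Out^{\ast}_{\mu_{K}}/\mu_{K}$. You make explicit two things the paper leaves implicit: surjectivity onto the torus, via openness of the CM image, and the inheritance of negative weights by a closed subgroup surjecting onto $\mbG_{m}^{2}/\mu_{K}$, via a Levi section. You also never need $\mu_{K}\subset\tilde{\mcG}_{K}$. What the paper's route buys is the identification of $\mcU_{K}$ with the prounipotent radical of the closure of the actual Galois image in $\Out^{\ast}$, which is convenient for the later comparison with the lower central series filtration. Your $\mcU_{K}$ sits in $\Out^{\ast}$ just as well. This is because the kernel of $\Out^{\ast}_{\mu_{K}}/\mu_{K}\to\mbG_{m}^{2}/\mu_{K}$ is the isomorphic image of the torsion-free kernel of $\Out^{\ast}_{\mu_{K}}\to\mbG_{m}^{2}$.
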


\begin{proof}
    The first assertion follows from \cite[Lemma 2.13]{Is23+}. To prove the second, let $\tilde{\mcG}_{K}$ be the Zariski closure of the image of $\rho_{X,p}^{\un}$ and $\tilde{\mcU}_{K}$ its prounipotent radical. Then (\ref{eq:toabel}) induces the exact sequence
    \[
    1 \to \tilde{\mcU}_{K} \to \tilde{\mathcal{G}}_{K} \to \mbG_{m}^{2} \to 1.
    \] Moreover, $\tilde{\mathcal{G}}_{K}$ contains $\mu_{K}$, which is a closed subgroup of $\Out^{\ast}_{\mu_{K}}(\pi_{1}^{\un}(X_{\bar{K}}))$ via (\ref{eq:muK}). In fact, the inverse image of $\mu_{K}$ under $\tilde{\mathcal{G}}_{K} \to \mbG_{m}^{2}$ must coincide with it, since the kernel of the latter map of (\ref{eq:toabel}) is pronilpotent (and hence is torsion-free). Since $\mcG_{K}$ is isomorphic to $\tilde{\mcG}_{K}/\mu_{K}$, we obtain $\tilde{\mcU}_{K}=\mcU_{K}$ and the desired exact sequence. By Proposition \ref{prp:fundamental} (3), the extension is also negatively weighted.
\end{proof}

\subsection{Comparison of Lie algebras} We conclude Theorem \ref{thm:main1} as follows: Let $\mathcal{G}$ be the weighted completion of the data associated with $K$, $\mathcal{U}$ its prounipotent radical and $\mathfrak{u}$ the Lie algebra of $\mathcal{U}$. By Proposition \ref{prp:fundamental2} (1) and (2), together with the universality of weighted completion, we obtain a homomorphism
    $
    \mathcal{G} \to \mathcal{G}_{K}
    $. Noting that $G_{K} \to \mathcal{G}_{K}(\mbQ_{p})$ has Zariski-dense image, this homomorphism is surjective, and so is 
    $
    \mathcal{U} \to \mathcal{U}_{K}
    $. Since the induced surjection $\mfu \twoheadrightarrow \mfu_{K}$ is strict \cite[Theorem 3.11]{HM03}, passing to graded Lie algebras yields a surjection
    \[
    \Gr^{W}_{\bullet} \mathfrak{u} \twoheadrightarrow \Gr^{W}_{\bullet} \mathfrak{u}_{K}.
    \] Now Theorem \ref{thm:main1} follows from the following lemma, Theorem \ref{thm:MCCMLie} and $\mfg_{X, 2}=0$ \cite[(4.4)]{Na95}.

    \begin{lemma}
        We have a bidegree-reversing isomorphism $\Gr^{W}_{\bullet} \mathfrak{u}_{K} \cong \mfg_{X} \otimes \mbQ_{p}$.
    \end{lemma}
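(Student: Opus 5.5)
The plan is to identify both sides with the graded pieces of one filtration on $G_{K}$, namely the pullback of the weight filtration on $\mcG_{K}$.

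First, by Proposition \ref{prp:profvsalg} it suffices to produce a bidegree-reversing isomorphism $\Gr^{\bullet}_{\mcF} G_{K} \otimes \mbQ_{p} \cong \Gr^{W}_{\bullet}\mfu_{K}$. Write $\bar\rho \colon G_{K} \to \mcG_{K}(\mbQ_{p})$ for the composite defining $\mcG_{K}$.

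I claim that $\mcF^{m}G_{K} = \bar\rho^{-1}(W_{-m}\mcU_{K}(\mbQ_{p}))$ for every $m>0$.
\begin{itemize}
\item For $m=1$, $\mcF^{1}G_{K}$ is the kernel of the action on $V_{p}(E)$. Since $\mu_{K}$ acts faithfully on $V_{p}(E)$, this is the preimage of $\mcU_{K}$ (cf. the proof of Proposition \ref{prp:fundamental2}).
\item For $m \geq 1$, Proposition \ref{prp:fundamental} (4) identifies the weight filtration on $\ODer^{\ast}\mcP(X_{\bar{K}})$ with the filtration by action on $\mcP(X_{\bar{K}})/L^{m+1}\mcP(X_{\bar{K}})$ modulo inner derivations. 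This holds for all of $\Out^{\ast}$, hence for the subgroup $\mcG_{K}$, because the weight filtration is strict for inclusions (\cite[Theorem 3.11]{HM03}).
\item Exponentiating, the prounipotent subgroup $W_{-m}$ of $\Out^{\ast}_{\mu_{K}}$ consists exactly of the outer automorphisms acting trivially on $\mcP(X_{\bar{K}})/L^{m+1}\mcP(X_{\bar{K}})$.
\item Quotienting by the finite group $\mu_{K}$ does not affect this. The subgroup $W_{-1}$ is torsion-free, so it meets $\mu_{K}$ trivially and maps isomorphically into the quotient.
\end{itemize}

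Consequently, each $\Gr^{m}_{\mcF}G_{K}$ embeds into
\[
(W_{-m}\mcU_{K}/W_{-m-1}\mcU_{K})(\mbQ_{p}) \cong \Gr^{W}_{-m}\mfu_{K},
\]
which is a vector group. This embedding is $G_{K}$-equivariant, so it respects the $\mbG_{m}^{2}$-bigrading with signs reversed, because the weights on $\mfu_{K}$ are negative. Commutators in $G_{K}$ map to Lie brackets on the associated graded, by the Baker--Campbell--Hausdorff formula. So we get an injective bidegree-reversing Lie algebra map $\Gr^{\bullet}_{\mcF}G_{K}\otimes\mbQ_{p} \hookrightarrow \Gr^{W}_{\bullet}\mfu_{K}$. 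Injectivity after $\otimes\mbQ_{p}$ holds since the images are $\mbZ_{p}$-submodules of $\mbQ_{p}$-vector spaces.

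The main obstacle is surjectivity, i.e. that the image of $\mcF^{m}G_{K}$ spans $\Gr^{W}_{-m}\mfu_{K}$.

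\emph{Density of $\Delta$.} Let $\Delta := \bar\rho(\mcF^{1}G_{K}) \subset \mcU_{K}(\mbQ_{p})$. The image of $G_{K}$ is Zariski dense in $\mcG_{K}$, and its image in $(\mbG_{m}^{2}/\mu_{K})(\mbQ_{p})$ is open. Hence $\Delta$, being of finite-index-type (open) inside the preimage of an open subgroup, is Zariski dense in $\mcU_{K}$. I would prove this by noting that the Zariski closure of $\Delta$ is normalized by the whole image and has finite-type quotient.

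\emph{Spanning of $\log\Delta$.} Work at each finite level $\mcU_{K}/W_{-N}$. There $\Delta$ is a compact subgroup of a unipotent $\mbQ_{p}$-group, and a Zariski dense one. Its logarithm $\log\Delta$ therefore spans a Lie subalgebra whose exponential is Zariski closed and contains $\Delta$. Hence $\log\Delta$ spans $\mfu_{K}/W_{-N}$, and it is a $\mbZ_{p}$-lattice up to bounded denominators, by Mal'cev theory.

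\emph{Spanning of each graded piece.} Intersecting a spanning lattice with the subspace $W_{-m}$ still spans $W_{-m}$. Moreover, $\log(\Delta \cap W_{-m}\mcU_{K}) = \log\Delta \cap W_{-m}\mfu_{K}$. So the image of $\mcF^{m}G_{K}$ spans $\Gr^{W}_{-m}\mfu_{K}$, which gives surjectivity.
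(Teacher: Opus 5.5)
Your outline is the paper's: identify $\mathcal{F}^{\bullet}G_{K}$ with the pullback of the weight filtration via Proposition~\ref{prp:fundamental}~(4), show that the image of $\mathcal{F}^{1}G_{K}$ is Zariski dense in $\mathcal{U}_{K}$, and compare graded pieces by a Mal'cev-type argument (the paper cites \cite[Lemma 7.5]{HM03} for this). However, the density step, which you rightly call the crux, is not proved. Openness of the image in $(\mathbb{G}_{m}^{2}/\mu_{K})(\mathbb{Q}_{p})$, or finite-index considerations, say nothing about density in the unipotent radical. For general extensions the statement is in fact false: in $\mathbb{G}_{m}\times\mathbb{G}_{a}$ the subgroup $\{(u,\log_{p}u) : u\in 1+p\mathbb{Z}_{p}\}$ is Zariski dense but meets $1\times\mathbb{G}_{a}$ trivially. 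The missing input is that the weights are negative. One fix: the Zariski closure $D$ of $\Delta$ is normalized by $\bar\rho(G_{K})$, hence normal in $\mathcal{G}_{K}$. Since $\bar\rho(G_{K})/\Delta$ is a quotient of the abelian group $G_{K}/\mathcal{F}^{1}G_{K}$, the quotient $\mathcal{G}_{K}/D$ is commutative. So $\mathbb{G}_{m}^{2}/\mu_{K}$ acts trivially on $\mathfrak{u}_{K}/\mathrm{Lie}(D)$, and as all weights of $\mathfrak{u}_{K}$ lie in $\mathbb{Z}^{2}_{<0}$, this forces $D=\mathcal{U}_{K}$. The paper instead takes density in the weighted completion from \cite[Corollary 4.15]{HM03} and pushes it forward along $\mathcal{U}\twoheadrightarrow\mathcal{U}_{K}$.

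Two further corrections. First, your identity $\mathcal{F}^{m}G_{K}=\bar\rho^{-1}(W_{-m}\mathcal{U}_{K}(\mathbb{Q}_{p}))$ is false, already for $m=1$. Because $\mathcal{G}_{K}$ sits in the quotient by $\mu_{K}$, $\bar\rho^{-1}(\mathcal{U}_{K})$ consists of all $\sigma$ acting on $V_{p}(E)$ through an element of $\mu_{K}$. This is in general strictly larger than $\mathcal{F}^{1}G_{K}$ (think $G_{K(p^{\infty})}$ versus $G_{K(E[p^{\infty}])}$). Faithfulness of $\mu_{K}$ on $V_{p}(E)$ only gives $\mu_{K}\cap\tilde{\mathcal{U}}_{K}=1$, whereas the preimage of $\mathcal{U}_{K}$ in $\tilde{\mathcal{G}}_{K}$ is $\mu_{K}\tilde{\mathcal{U}}_{K}$. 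The correct statement is $\mathcal{F}^{m}G_{K}=\mathcal{F}^{1}G_{K}\cap\bar\rho^{-1}(W_{-m}\mathcal{U}_{K}(\mathbb{Q}_{p}))$. Since your $\Delta$ is built from $\mathcal{F}^{1}G_{K}$, this is all your embedding and spanning steps actually use, so the repair is harmless. Second, ``a lattice up to bounded denominators'' needs a reason, since $\log\Delta$ is not additively closed. At each finite level, a compact Zariski-dense subgroup of a unipotent $\mathbb{Q}_{p}$-group is open: its $p$-adic Lie algebra contains $\log\Delta$, which generates everything. Hence $\log\Delta$ contains a full lattice, and $\log\Delta\cap W_{-m}$ spans $W_{-m}$. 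With these fixes your argument becomes a hands-on version of \cite[Lemma 7.5]{HM03}.
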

    \begin{proof}
        Since \cite[Corollary 4.15]{HM03} can be generalized to our setting without difficulty, the map $F^{1}G_{K} \to \mathcal{U}(\mbQ_{p})$ is found to have Zariski-dense image. Hence the map $F^{1}G_{K} \to \mathcal{U}_{K}(\mbQ_{p})$ also has Zariski-dense image. The assertion now follows from Proposition \ref{prp:profvsalg}, Proposition \ref{prp:fundamental} (4) and \cite[Lemma 7.5]{HM03}.
    \end{proof}

    \begin{remark}\label{rmk:constrained}
        Let $(\Gamma,R, S, \rho)$ be the data given in Example \ref{ex:MTM}. Hain and Matsumoto considers another weighted completion, which they call the \emph{constrained weighted completion} \cite[9.3]{HM03}. In terms of the tannakian interpretation (Theorem \ref{thm:tannaka}), this corresponds to considering the subcategory of weighted $(\Gamma,R, S, \rho)$-modules that are \emph{crystalline} at $p$. They have shown that the associated graded Lie algebra of the prounipotent radical of the constrained weighted completion is freely generated by one element in each degree $-6, -10, \dots$, cf. Example \ref{ex:MTMLie}. In particular, it does not have the degree $(-2)$-part. 

        We can also consider the constrained weighted completion of the data associated with $K$, by considering weighted modules that are crystalline at $\{ \p, \bar{\p} \}$. A similar computation to \cite[9.3]{HM03} shows that the associated (bi)graded Lie algebra $\Gr^{W}_{\bullet} \mfu_{\mathrm{c}}$\footnote{Here, the subscript c just stands for ``constrained''.} of the prounipotent radical $\mcU_{\mathrm{c}}$ is freely generated by $-I_{K}$, cf. Theorem \ref{thm:MCCMLie} under the same assumption. Since $\mathfrak{g}_{X, 2}=0$, it follows that the surjectve map
        \[
        \Gr^{W}_{\bullet} \mfu \to \Gr^{W}_{\bullet} \mfu_{K}
        \] factors through $\Gr^{W}_{\bullet} \mfu_{\mathrm{c}}$. Therefore, under the same assumption as Theorem \ref{thm:main1}, Conjecture \ref{cnj:ellDI} is equivalent to saying that this map is an isomorphism. Noting that $\mfu_{K}$ is embedded in the derivation algebra of $\mcP(X_{\bar{K}})$, this amounts to saying that the map $\mcU_{c} \to \Out(\pi_{1}^{\un}(X_{\bar{K}}))$ is a closed immersion. This is a proalgebraic reformulation of Conjecture \ref{cnj:ellDI}.
    \end{remark}

We close this subsection by discussing the \emph{local analogue} of Theorem \ref{thm:main1}. Namely, fix a prime of $\bar{\mbQ}$ lying above $\p$. Such a prime determines a homomorphism $
    G_{\mbQ_{p}} \to G_{K}
    $, and let $F^{m}G_{\mbQ_{p}}$ be the inverse image of $F^{m}G_{K}$. Similar to the global case, we define the graded Lie subalgebra $\oplus_{m>0} \Gr^{m}_{F}G_{\mbQ_{p}}$ of $\mfg_{X}=\oplus_{m>0} \Gr^{m}_{F}G_{K}$. The point is that, in the local situation, we can prove the finiteness of the concerned cohomology groups using Tate's local duality:

\begin{lemma}\label{lmm:localH2}
    The cohomology group $H^{2}(G_{\mbQ_{p}}, \mbQ_{p}(\boldsymbol{m}))$ is trivial for every $\boldsymbol{m} \in I_{K}$.
\end{lemma}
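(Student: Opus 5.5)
We will use Tate local duality. Since $\p$ splits, $K_{\p}=\mbQ_{p}$, and $G_{\mbQ_{p}}$ acts on $\mbQ_{p}(\boldsymbol{m})$ through the character $\chi_{1}^{m_{1}}\chi_{2}^{m_{2}}$ restricted to the decomposition group at $\p$. Local duality gives
\[
H^{2}(G_{\mbQ_{p}}, \mbQ_{p}(\boldsymbol{m})) \cong H^{0}(G_{\mbQ_{p}}, \mbQ_{p}(\boldsymbol{m})^{\ast}(1))^{\ast},
\]
which one deduces from the finite-level statements for $\mbZ/p^{n}\mbZ(\boldsymbol{m})$ by passing to the limit and tensoring with $\mbQ_{p}$. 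The limit is harmless because all the groups involved are finite. So it suffices to show that the character $\chi_{1}^{-m_{1}}\chi_{2}^{-m_{2}}\chi_{\mathrm{cyc}}$ of $G_{\mbQ_{p}}$ is nontrivial for every $\boldsymbol{m}\in I_{K}$, where $\chi_{\mathrm{cyc}}$ is the $p$-adic cyclotomic character.

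Next we identify the local characters. By the Weil pairing, $\chi_{1}\chi_{2}=\chi_{\mathrm{cyc}}$. Restricted to $G_{K_{\p}}=G_{\mbQ_{p}}$, the character $\chi_{2}$, which gives the action on $T_{\bar{\p}}(E)$, is unramified. Indeed, $E$ has good reduction at $\p$ after a finite extension, and $T_{\bar{\p}}(E)$ corresponds to the étale part of the $p$-divisible group, i.e.\ the reduction is ordinary with $E[\p^{\infty}]$ the formal part. It may be cleaner to use only an open subgroup of inertia, since potentially good reduction gives $\chi_{2}$ of finite order on inertia. Meanwhile $\chi_{1}$ on inertia agrees, up to a finite-order character, with $\chi_{\mathrm{cyc}}$; this is Lubin--Tate theory, using that $K_{\p}=\mbQ_{p}$ and the formal group has height one.

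Hence, on an open subgroup of the inertia group $I_{p}$, the character in question equals $\chi_{\mathrm{cyc}}^{1-m_{1}}$. This is nontrivial, even up to finite order, unless $m_{1}=1$. If $m_{1}=1$, the inertia restriction is of finite order and we must look at Frobenius instead. Here $\chi_{2}(\mathrm{Frob})$ is a Weil number of absolute value $p^{1/2}$ (the unit root of the ordinary reduction, up to finite order), and $\chi_{1}\chi_{2}^{-1}$ (times roots of unity) contributes weight. Concretely, for $m_{1}=1$ the character is $\chi_{1}^{-1}\chi_{2}^{-m_{2}}\chi_{1}\chi_{2}=\chi_{2}^{1-m_{2}}$, which is unramified up to finite order. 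Since $(1,1)\notin I_{K}$, we have $m_{2}\neq1$, and the Frobenius eigenvalue is a $p$-adic unit whose complex absolute value is $p^{(1-m_{2})/2}\neq1$, so it is not a root of unity. Therefore the character is nontrivial, indeed of infinite order, so $H^{0}$ vanishes rationally.

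The main obstacle is making the local description of $\chi_{1},\chi_{2}$ on $G_{\mbQ_{p}}$ precise: one needs to be careful about which factor is the formal (Lubin--Tate) one at $\p$ versus the étale one, and to handle potentially good reduction by passing to open subgroups. Passing to an open subgroup is allowed because nontriviality up to finite order is all that matters for $\mbQ_{p}$-coefficients; if $H^{0}$ is nonzero rationally, the character is trivial, and in particular trivial on any open subgroup. An alternative that avoids some of this bookkeeping is the local Euler characteristic formula: it gives $\dim H^{1}=1+\dim H^{0}+\dim H^{2}$, so it would suffice to show $\dim H^{1}=1$ via Hodge--Tate weights. The duality route above seems more direct.
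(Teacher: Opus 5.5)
Your proposal is correct and takes the paper's route: the paper just invokes Tate local duality, and you fill in the step it leaves implicit, namely that $\chi_{\mathrm{cyc}}\chi_1^{-m_1}\chi_2^{-m_2}=\chi_1^{1-m_1}\chi_2^{1-m_2}$ is nontrivial on $G_{\mathbb{Q}_p}$ (it is $\chi_{\mathrm{cyc}}^{1-m_1}$ on an open subgroup of inertia, and for $m_1=1$ it is $\chi_2^{1-m_2}$, detected by its Frobenius weight). One cosmetic correction: if $E$ has good reduction only over a finite extension $L/\mathbb{Q}_p$, the Frobenius absolute value is $q_L^{(1-m_2)/2}$ rather than $p^{(1-m_2)/2}$, which does not affect the conclusion.
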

\begin{proof}
    The assertion follows from Tate's local duality \cite[Theorem 7.2.6]{NSW08}.
\end{proof}

Therefore, arguing as in the proof of Theorem \ref{thm:main1}, we obtain the following result.

\begin{theorem}\label{thm:localanalogue}
    The bigraded Lie algebra $\oplus_{m>0} \Gr_{F}^{m}G_{\mbQ_{p}} \otimes \mbQ_{p}$ is generated by one element (which is possibly trivial) in each bidegree $\boldsymbol{m} \in I_{K}$. 
\end{theorem}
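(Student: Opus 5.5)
We mimic the global argument of \textsection\ref{sec:3}, with $G_{K}$ replaced by the decomposition group $G_{\mbQ_{p}} \to G_{K}$ attached to the fixed prime above $\p$. Take the local data $(\Gamma,R,S,\rho)$: $\Gamma=G_{\mbQ_{p}}$, $R=\mbG_{m}^{2}/\mu_{K}$, $S$ the two cocharacters induced by $w_{\p},w_{\bar{\p}}$, and $\rho$ the composite $G_{\mbQ_{p}} \to G_{K} \to (\mbG_{m}^{2}/\mu_{K})(\mbQ_{p})$. Here $K_{\p}=\mbQ_{p}$. We first check that $\rho$ has Zariski-dense image. The characters $\chi_{1},\chi_{2}$ restricted to $G_{\mbQ_{p}}$ are, respectively, a Lubin--Tate character (infinitely ramified) and an unramified character of infinite order, since Frobenius acts on $T_{\bar{\p}}(E)$ through $\pi_{\p}$ or its conjugate. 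Hence the image is open in a product of $\mbZ_{p}^{\times}$'s and is Zariski dense.

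Let $\mcG^{\mathrm{loc}}$ be the weighted completion and $\mfu^{\mathrm{loc}}$ the Lie algebra of its prounipotent radical. Let $\mcG_{\mbQ_{p}}$ be the Zariski closure of the image of $G_{\mbQ_{p}}$ in $\Out^{\ast}_{\mu_{K}}(\pi_{1}^{\un}(X_{\bar{K}}))/\mu_{K}$. Being a closed subgroup of $\mcG_{K}$ that surjects onto $\mbG_{m}^{2}/\mu_{K}$ (by density), it is again a negatively weighted extension. We get this exactly as in Proposition~\ref{prp:fundamental2}(2), using Proposition~\ref{prp:fundamental}(3): a subgroup of a negatively weighted extension surjecting to $R$ is negatively weighted, because the weights of $H_{1}$ of its unipotent radical occur among those of the ambient $\mfu$. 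Note that no unramifiedness is needed, since $\Gamma$ is the full local Galois group. Universality gives $\mcG^{\mathrm{loc}} \twoheadrightarrow \mcG_{\mbQ_{p}}$, and strictness \cite[Theorem 3.11]{HM03} gives a surjection $\Gr^{W}_{\bullet}\mfu^{\mathrm{loc}} \twoheadrightarrow \Gr^{W}_{\bullet}\mfu_{\mbQ_{p}}$.

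Next we identify $\Gr^{W}_{\bullet}\mfu_{\mbQ_{p}}$ with $\oplus_{m}\Gr^{m}_{F}G_{\mbQ_{p}}\otimes\mbQ_{p}$, reversing bidegrees. The local filtration $F^{m}G_{\mbQ_{p}}$ is just the restriction of the weight filtration, so the argument of Proposition~\ref{prp:profvsalg} applies verbatim. Then \cite[Corollary 4.15]{HM03} (density of $F^{1}$ in $\mcU$) and \cite[Lemma 7.5]{HM03}, together with Proposition~\ref{prp:fundamental}(4), give the comparison, as in the lemma preceding Remark~\ref{rmk:constrained}.

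We then compute $H_{1}(\mfu^{\mathrm{loc}})$ and show freeness using Theorem~\ref{thm:wtdLie}. The groups $H^{1}(G_{\mbQ_{p}},\mbQ_{p}(\boldsymbol{m}))$ are finite-dimensional by local Tate theory. By Lemma~\ref{lmm:localH2}, $H^{2}$ vanishes for all $\boldsymbol{m}\in I_{K}$, and also for weights $\boldsymbol{m}\in\mbZ^{2}_{>0}$ with $m_{1}\equiv m_{2} \bmod w_{K}$ and $|\boldsymbol{m}|\ge 2$ (this includes $(1,1)$ being excluded from the $H^{2}$ condition only when $|\boldsymbol{m}|<2$). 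Thus $\Gr^{W}_{\bullet}\mfu^{\mathrm{loc}}$ is free, with generators in bidegree $-\boldsymbol{m}$ counted by $\dim H^{1}(G_{\mbQ_{p}},\mbQ_{p}(\boldsymbol{m}))$. By the local Euler characteristic formula this dimension is $1$ when $\boldsymbol{m}\neq(1,1)$, since $H^{0}=H^{2}=0$ and $[\mbQ_{p}:\mbQ_{p}]=1$. In bidegree $(-1,-1)$ there are more generators, but $\mfg_{X,2}=0$ \cite[(4.4)]{Na95}, so these map to zero. The images of the remaining generators therefore generate the target, one element per $\boldsymbol{m}\in I_{K}$, possibly zero.

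The main obstacle is justifying that restricting to the local group preserves the comparison, that is, that the Zariski closure of the local image is negatively weighted and that the graded pieces $\Gr_{F}^{m}G_{\mbQ_{p}}\otimes\mbQ_{p}$ match $\Gr^{W}\mfu_{\mbQ_{p}}$. I would handle this by embedding $\mcG_{\mbQ_{p}}\subset\mcG_{K}$ and checking directly that the weight filtration restricts.
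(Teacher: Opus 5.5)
Your route is the paper's: the local data $(G_{\mbQ_p},\mbG_m^2/\mu_K,S,\rho)$, its weighted completion, the surjection onto the Zariski closure of the local image, the comparison with $\Gr_F$, and $\mfg_{X,2}=0$ to kill bidegree $(-1,-1)$; you also supply the Zariski-density check that the paper leaves implicit. The one false claim is that $H^2$ vanishes at $\boldsymbol{m}=(1,1)$. Since $\mbQ_p(1,1)\cong\mbQ_p(1)$ by the Weil pairing, local duality gives $H^2(G_{\mbQ_p},\mbQ_p(1,1))\cong\mbQ_p$, which is exactly why $\dim H^1=2$ there, as you use afterwards. This does not damage the proof: $H^2(\mfu^{\mathrm{loc}})$ only has weights with both coordinates $\geq 2$, so $(1,1)$ never enters the freeness criterion. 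In any case, generation already follows from Theorem~\ref{thm:wtdLie}(1) without freeness.
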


Here, we remark that, in the global case, we obtain an isomorphism
\[
(\mfg_{X} \otimes \mbQ_{p})^{\ab} \xrightarrow{\oplus \kappa_{\boldsymbol{m}}} \bigoplus_{\boldsymbol{m} \in I_{K}} \mbQ_{p}(\boldsymbol{m}), 
\] cf. the discussion before Remark \ref{rmk:upperbound}. Hence Theorem \ref{thm:main1} gives a minimal generating set  of $\mfg_{X} \otimes \mbQ_{p}$. However, in the local case, we do not know whether the restriction of each character $\kappa_{\boldsymbol{m}}$ to the decomposition subgroup is nontrivial or not. The problem seems to be related to the nonvanishing of various special values of Katz's two-variable $p$-adic $L$-functions, as they appear at least when considering the restriction of $\kappa_{(m,m)}$ to the inertia subgroup at $\p$ \cite[Corollary 1.10]{IG25}. By this reason, it seems to be hard to show that 
\[
\bigoplus_{m>0} \Gr_{F}^{m}G_{\mbQ_{p}} \otimes \mbQ_{p}
\hookrightarrow 
\bigoplus_{m>0} \Gr_{F}^{m}G_{K} \otimes \mbQ_{p}=\mfg_{X} \otimes \mbQ_{p}
\] is an equality. On the other hand, if the Galois group of the maximal pro-$p$ extension of $K(p)$ unramified outside $p$ is \emph{of local type} \cite[Definition 10.9.6]{NSW08}, the local Lie algebra coincides with $\mfg_{X}$ by \cite[Lemma 2.13]{Is23+}. Here, the concerned Galois group is said to be of local type if a decomposition subgroup above $\p$ coincides with the concerned Galois group. The property of being of local type is also known to be related to the $p$-indivisibility of various special values of Hecke $L$-functions by works of Yager \cite[Theorem 3]{Ya82} and Wingberg \cite[Theorem]{Wi90}.

\subsection{Summary of main result for inert primes}\label{sec:3.4}

The aim of this last subsection is simply to remark that an analogue of Theorem \ref{thm:main1} for an \emph{inert} prime $p$ can be obtained by a similar consideration. Throughout this subsection, assume $p$ is inert in $K$. Then the $p$-adic Tate module $T_{p}(E) \otimes K_{p}$ over $K_{p}$ decomposes into two one-dimensional Galois representations, which we denote by $V_{1}$ and $V_{2}$. We write
$
K_{p}(\boldsymbol{m}) \coloneqq V_{1}^{\otimes m_{1}} \otimes V_{2}^{\otimes m_{2}}. 
$ for $\boldsymbol{m}=(m_{1},m_{2}) \in \mbZ^{S}$. Each graded component $\mfg_{m} \otimes K_{p}$ is isomorphic to a finite sum of (a finite sum of) $K_{p}(\boldsymbol{m})$, where $\boldsymbol{m}$ is an element $I_{K}$ satisfying $|\boldsymbol{m}|=m$. 

\begin{theorem}\label{thm:main2}
    Assume that the second \'etale cohomology group
    \[
    H^{2}_{\et}(\mcO_{K}[1/p], K_{p}(\boldsymbol{m}))= \varprojlim_{n}  H^{2}_{\et}(\mcO_{K}[1/p], \mcO_{K}/p^{n}(\boldsymbol{m})) \otimes \mbQ_{p}
    \] is trivial for every $\boldsymbol{m} \in I_{K}$. Then the bigraded Lie algebra $\mfg_{X} \otimes K_{p}$ is generated by $\{ \sigma_{\boldsymbol{m}} \}_{\boldsymbol{m} \in I_{K}}$, where $\sigma_{\boldsymbol{m}}$ is an element of the $K_{p}(\boldsymbol{m})$-isotypic component of $\mfg_{X,|\boldsymbol{m}|} \otimes K_{p}$. 
\end{theorem}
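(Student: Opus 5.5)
The plan is to repeat the argument for Theorem \ref{thm:main1}, working over $K_{p}$, the unramified quadratic extension of $\mbQ_{p}$, instead of over $\mbQ_{p}$. When $p$ is inert, the Galois image in $\GL(V_{p}(E))$ is the $\mbQ_{p}$-torus $T \coloneqq \Res_{K_{p}/\mbQ_{p}}\mbG_{m}$. This torus is not split, and so we first extend scalars to $K_{p}$. After base change $T_{K_{p}} \cong \mbG_{m}^{2}$, where the two factors act on $V_{1}$ and $V_{2}$, and $\mu_{K}$ embeds diagonally as before. We therefore set up the weighted completion over $K_{p}$. The data are: $\Gamma$ the Galois group of the maximal extension of $K$ unramified outside $p$; $R=\mbG_{m}^{2}/\mu_{K}$ over $K_{p}$; $S$ the two cocharacters $x\mapsto (x^{-1},1)$ and $(1,x^{-1})$; and $\rho$ induced by the action on $V_{1}\oplus V_{2}$. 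This $\rho$ is again unramified outside $p$ and independent of $E$, by the same reasoning as in Example-Definition \ref{ex:MCMM}. Section \ref{sec:2} goes through verbatim over any field of characteristic zero in place of $\mbQ_{p}$, in particular over $K_{p}$. The irreducible representations of $R$ are the $K_{p}(\boldsymbol{m})$ with $m_{1}\equiv m_{2} \bmod w_{K}$, and continuous cohomology of $\Gamma$ agrees with $H^{i}_{\et}(\mcO_{K}[1/p], K_{p}(\boldsymbol{m}))$.

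Next comes the analogue of Theorem \ref{thm:MCCMLie}. The Euler--Poincar\'e characteristic of $K_{p}(\boldsymbol{m})$ over $K_{p}$ equals $-1$, as in Remark \ref{rmk:EPchar}: compute it over $\mbQ_{p}$, where it is $-2$ for the two-dimensional representation, and halve. Hence the assumed vanishing of $H^{2}$ gives $\dim H^{1}=1$ for each $\boldsymbol{m}\in I_{K}$. For $\boldsymbol{m}=(1,1)$, Kummer theory gives $\dim H^{1}=2$. Theorem \ref{thm:wtdLie}(3) then shows that $\Gr^{W}_{\bullet}\mfu$ is free on one generator in each bidegree of $-I_{K}$ and two in bidegree $(-1,-1)$. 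The remaining bidegrees $(a,b)$ with $a\ne b$ and $a+b=2$ cannot occur, because of the congruence condition and positivity.

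Finally, I will compare this with the Galois image. Take $\Out^{\ast}$ as in Definition \ref{dfn:asterisk}, but over $K_{p}$ after base change. Choose the basis $\{x,y\}$ of $\mcP(X_{\bar K})\otimes K_{p}$ to lift generators of $V_{1}$ and $V_{2}$; the subgroup (\ref{eq:decomp}) is replaced by $T_{K_{p}}$. The proof of Proposition \ref{prp:fundamental} is purely Lie-theoretic in $x$ and $y$, so it carries over unchanged. The same holds for Proposition \ref{prp:fundamental2}, where unramifiedness outside $p$ should follow from the inert version of \cite[Lemma 2.13]{Is23+}. The Zariski closure $\mcG_{K}$ of the image (modulo $\mu_{K}$), base-changed to $K_{p}$, is then a negatively weighted extension. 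Universality gives a surjection $\mcG\to\mcG_{K}$, and strictness gives a surjection $\Gr^{W}_{\bullet}\mfu\twoheadrightarrow \Gr^{W}_{\bullet}\mfu_{K}$. Then Proposition \ref{prp:profvsalg} tensored with $K_{p}$, together with \cite[Lemma 7.5]{HM03}, identifies the target with $\mfg_{X}\otimes K_{p}$. Since $\mfg_{X,2}=0$, the two generators in bidegree $(-1,-1)$ map to zero, and the generators indexed by $I_{K}$ give the elements $\sigma_{\boldsymbol{m}}$.

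The main obstacle is the descent and base-change bookkeeping. One must check that the Zariski closure over $\mbQ_{p}$, base-changed to $K_{p}$, coincides with the object that satisfies the universal property over $K_{p}$. One must also check that weighted completion commutes with the extension of scalars $\mbQ_{p}\to K_{p}$. My first attempt would be to work from the start with $K_{p}$-points and $K_{p}$-proalgebraic groups, so that no descent is needed. Zariski density of $\Gamma$ in the split torus over $K_{p}$ holds because the image of $G_{K}$ is open in $(\mcO_{K}\otimes\mbZ_{p})^{\times}$. The other ingredients, strictness and \cite[Corollary 4.15]{HM03}, are field-independent.
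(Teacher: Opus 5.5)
Your argument is correct, but it handles the one genuinely new issue differently from the paper. That issue is that, for inert $p$, the torus through which $G_{K}$ acts on $V_{p}(E)$ is not split.

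\textbf{The paper's route.} It keeps $\mbQ_{p}$ as the coefficient field and replaces $\mbG_{m}^{2}/\mu_{K}$ by the non-split torus $\Res_{K_{p}/\mbQ_{p}}(\mbG_{m})/\mu_{K}$. The two cocharacters are then defined only over $K_{p}$, while their product, and hence the total weight filtration, is defined over $\mbQ_{p}$. The content of \textsection\ref{sec:2} (negatively weighted extensions, the $\mbZ^{S}$-grading, Theorem~\ref{thm:wtdLie}) must then be reworked for such data, which the paper leaves to the reader.

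\textbf{Your route.} You extend scalars to $K_{p}$ from the start. Then $R$ is split, the cocharacters are defined over the coefficient field, and \textsection\ref{sec:2} applies verbatim with $K_{p}$-coefficients. Only two compatibilities are needed. First, the Zariski closure of a set of $\mbQ_{p}$-points commutes with base change to $K_{p}$, so Zariski density of the Galois image, and of the image of $F^{1}G_{K}$, persists. Second, Proposition~\ref{prp:profvsalg} and \cite[Lemma 7.5]{HM03} should be applied over $\mbQ_{p}$, where the Galois image actually lives, and then tensored with $K_{p}$. This order matters: a $\mbZ_{p}$-lattice inside a $K_{p}$-vector space need not satisfy $M\otimes_{\mbZ_{p}}K_{p}\hookrightarrow V$. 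Your last paragraph is consistent with this. In particular, the worry you raise about weighted completion commuting with scalar extension never arises on your route.

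\textbf{Comparison.} Your route is more economical for the theorem as stated, which only concerns $\mfg_{X}\otimes K_{p}$. The paper's route keeps the weighted completion, and its tannakian description by weighted $\mbQ_{p}$-representations, rational over $\mbQ_{p}$. The price is generalizing \textsection\ref{sec:2} to a non-split $R$ whose individual cocharacters are defined only over a finite extension.

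\textbf{One harmless slip.} For inert $p$ there is only one prime above $p$, so $\mcO_{K}[1/p]^{\times}$ has rank $1$. Kummer theory therefore gives $\dim_{K_{p}}H^{1}_{\et}(\mcO_{K}[1/p],K_{p}(1,1))=1$, not $2$. Equivalently, $H^{2}$ vanishes in bidegree $(1,1)$ because $\mathrm{Br}(\mcO_{K}[1/p])=0$ here; so even the literal hypothesis of Theorem~\ref{thm:wtdLie}(3), which includes weight $(1,1)$, is satisfied. Thus $\Gr^{W}_{\bullet}\mfu$ has a single generator in bidegree $(-1,-1)$. Since $\mfg_{X,2}=0$, it maps to zero in $\mfg_{X}\otimes K_{p}$ either way, and your conclusion is unaffected.
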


\begin{remark}
    Similar to Remark \ref{rmk:EPchar}, the concerned cohomology groups are known to vanish in several cases. For example, consider the Galois group of the maximal pro-$p$ extension of $K(p)$ unramified outside $p$ (and the set of archimedean primes). Then, by \cite[Theorem 10.7.13]{NSW08}, it is a free pro-$p$ group if the class number of $K(p)$ does not divide $p$. For such an inert prime, the concerned cohomology groups vanish, as in Lemma \ref{lmm:H2}.
\end{remark}

The proof of Theorem \ref{thm:main2} is the same as that of Theorem \ref{thm:main1}, noting that we have to consider $\Res_{K_{p}/\mbQ_{p}}(\mbG_{m})/\mu_{K}$ instead of $\mbG_{m}^{2}/\mu_{K}$ in the data associated with $K$ (Example-Definition \ref{ex:MCMM}). Moreover, the two cocharacters are defined over $K_{p}$, though their product is still defined over $\mbQ_{p}$. We hence have to modify the content of \textsection \ref{sec:2} according to these differences, and we leave this routine work for the interested reader. 

On the other hand, at the writing of the present paper, we do not have an analogue of Theorem \ref{thm:Ker} when $p$ is inert. Moreover, since we have not yet established the nontriviality of the characters $\kappa_{\boldsymbol{m}}$ for inert primes (cf. the discussion after Remark \ref{rmk:Jannsen}), our element $\sigma_{\boldsymbol{m}}$ in Theorem \ref{thm:main2} might be trivial. We hope to revisit these problems in near future.

\subsection*{Acknowledgements}\label{Acknowledgements} This paper is partly based on the author's talk at Spring seminar on Arithmetic Galois theory in Toyonaka 2025, which was held soon after the workshop ``Low dimensional topology and number theory XVI'' in honor of Professor Hiroaki Nakamura's 60th birthday. The author would like to thank \emph{Benjamin Collas},  \emph{Hiroaki Nakamura} and \emph{Pierre D\`ebes} for giving him an opportunity to give a talk at the workshop, and \emph{Akio Tamagawa} for helpful discussions. The author was supported by JSPS KAKENHI Grant Number JP23KJ1882.

\bibliographystyle{amsalpha}
\bibliography{references}

\end{document}